\documentclass[12pt]{article}

\usepackage[margin=1in]{geometry}
\usepackage{amsmath,amssymb,amsthm,mathtools}
\usepackage{hyperref}
\usepackage{enumitem}
\usepackage{tikz,float}
\usepackage{tikz-cd,cite}
\usepackage{graphicx}
\usetikzlibrary{arrows.meta,positioning,calc}
\everymath{\displaystyle}
\renewenvironment{proof}{{\noindent\bfseries Proof.}}{\qed}

\theoremstyle{plain}
\newtheorem{theorem}{Theorem}[section]
\newtheorem{proposition}[theorem]{Proposition}
\newtheorem{lemma}[theorem]{Lemma}

\theoremstyle{definition}

\newtheorem{example}[theorem]{Example}
\newtheorem{remark}[theorem]{Remark}
\newtheorem{problem}[theorem]{Problem}
\newtheorem{conjecture}[theorem]{Conjecture}

\newcommand{\KK}{\Bbbk}

\newcommand{\F}{\mathcal{F}}
\newcommand{\IF}{I_{\F}}
\newcommand{\MIN}{\mathrm{MinCov}}

\newcommand{\V}{V}

\newcommand{\A}{A}
\newcommand{\B}{B}

\providecommand{\keywords}[1]
{
	\noindent\textbf{Keywords:} #1
}
\providecommand{\ams}[2]
{
	\noindent\textbf{2020 AMS subject classification:} #2
}

\title{The NF-operator and the NF-Numbers of Simplicial Complexes}
\author{Bilal Ahmad Rather\\
	{\em\small School of Mathematics and Statistics, Shandong University of Technology,}\\
	{\em \small Zibo 255049, China}\\
	\texttt{bilalahmadrr@gmail.com}
}

\date{}

\begin{document}
	\maketitle
	
	\begin{abstract}
		Let $\bigtriangleup$ be a simplicial complex and let $\delta_{\mathcal{NF}}$ denote the NF-operator. The NF-complex $\delta_{\mathcal{NF}}(\bigtriangleup)$ is defined as the Stanley--Reisner complex of the facet ideal of $\bigtriangleup$. Iterating $\delta_{\mathcal{NF}}$ gives a periodic orbit (up to isomorphism), and the smallest positive integer $t$ for which $\delta_{\mathcal{NF}}^{\,t}(\bigtriangleup)\cong \bigtriangleup$ is called the \emph{NF-number} of $\bigtriangleup$ (Habi and Mahmood, Algebra Colloquium, 2022). In this work, we provide various results and determine explicit formulas for the NF-number for several families of graphs. In particular, we compute the NF-number for dumbbell graphs. We also prove that the NF-number of the complete split graph $S_{n,m}$ equals $m+n+2$, and that the NF-number of the double star $D_{p+q}$ equals $p+q+4$. We conclude with remarks, open problems, and conjectures to guide future research.
	\end{abstract}

	\keywords{Stanley–Reisner complex, facet ideal, simplicial complex, NF-Number, vertex cover.}
	
	\ams{}{13F55, 05E45.}

	\section{Introduction}
	
	The \emph{Stanley--Reisner ideal} $I_{\bigtriangleup_{\mathcal{N}}(\bigtriangleup)}$ (encoding nonfaces) and the \emph{facet ideal} $\IF(\bigtriangleup)$ (encoding facets as squarefree monomials) are two classical monomial ideals attached to a simplicial complex $\bigtriangleup$ on a finite set, see \cite{Vizing1965,Reisner1976,Stanley1975}.
	Faridi stressed facet ideals as a suitable complement to Stanley-Reisner theory \cite{Faridi2002}. Hibi and Mahmood \cite{HibiMahmood2020} discovered that passing from $\bigtriangleup$ to a new complex $\delta_{\mathcal{NF}}(\bigtriangleup)$ transforms the facet ideal of $\bigtriangleup$ into the Stanley--Reisner ideal of $\delta_{\mathcal{NF}}(\bigtriangleup)$. Iterating over this operator results in a periodic dynamical system on the finite set of complexes on $[n]=\{1,\dots,n\}$ vertices.	The associated period is the \emph{NF-number}.  
	This point of view places $\delta_{\mathcal{NF}}$ between combinatorial commutative algebra and finite dynamical systems. On the algebraic side, it uses the two standard ways of encoding a simplicial complex by squarefree monomial ideals: the Stanley--Reisner ideal and the facet ideal. On the combinatorial side, the operation can be described in terms of minimal vertex covers, or equivalently minimal transversals, of the facet hypergraph of $\bigtriangleup$. This makes the NF-operator closely related to classical duality phenomena for squarefree monomial ideals, including Alexander duality and the study of edge ideals, cover ideals, and facet ideals; see, for instance, \cite{BrunsHerzog1998,MillerSturmfels2005,Villarreal2001,FranciscoHaVanTuyl2011}. The Stanley--Reisner correspondence has been one of the central tools in combinatorial commutative algebra since the work of Stanley and Reisner. It translates topological and combinatorial properties of $\bigtriangleup$ into algebraic properties of the quotient ring $S/I_{\bigtriangleup}$, such as Cohen--Macaulayness, Betti numbers, and Hilbert series; see \cite{Reisner1976,Stanley1975,Hochster1977,Stanley1996,bilal}. Facet ideals, introduced and systematically studied by Faridi, provide a complementary construction: instead of recording the nonfaces of $\bigtriangleup$, they record the maximal faces themselves \cite{Faridi2002}. The NF-complex connects these two constructions by applying the Stanley--Reisner construction to the facet ideal.
	The NF-number can be defined for any simplicial complex, but graphs provide a concrete example. A finite simple graph $G$ transforms into a one-dimensional simplicial complex with edges as facets. 	For disjoint unions of two cliques $K_n \cup K_m$, Hibi and Mahmood calculated the NF-number and obtained a clear formula \cite{HibiMahmood2020}. The NF-number of two complete graphs joined by a common vertex can be seen in \cite{bilalhafiz}. The NF-number of disjoint union of finite copies of complete graph are given in \cite{bilalhafiz1}.
	The problem of determining NF-numbers is therefore naturally related to the problem of understanding how minimal vertex covers evolve under repeated complementation. Even for graphs, this is a delicate question because the first NF-iterate of a graph need not remain one-dimensional. Thus, graph-theoretic data may produce higher-dimensional simplicial complexes after only one or two applications of $\delta_{\mathcal{NF}}$. This phenomenon is one of the reasons explicit NF-number computations are nontrivial beyond complete graphs and their close relatives. In the present work, we continue this line of investigation for several familiar graph families. We first analyze dumbbell graphs, obtained by joining two complete graphs by a bridge edge. We then treat complete split graphs and double stars. These families are sufficiently structured to allow explicit descriptions of their minimal vertex covers, but they are rich enough to show how quickly NF-iteration leaves the category of graphs and enters the broader category of simplicial complexes. Throughout the paper, graphs are regarded as one-dimensional simplicial complexes whose facets are their edges.
	
	 	A \emph{simplicial complex} $\bigtriangleup$ on finite set $\V$ is a family of subsets of $\V$
		such that if $F\in\bigtriangleup$ and $G\subseteq F$, then $G\in\bigtriangleup$.
		The elements of $\bigtriangleup$ are called \emph{faces}, and the maximal faces under inclusion are known as \emph{facets}.
		The collection of facets of $\bigtriangleup$ is denoted by $\F(\bigtriangleup)$. Two complexes $\bigtriangleup,$ and $\Gamma$ on vertex sets $\V, $ and $V'$ are \emph{isomorphic} if there exists a bijection	$\pi:\V\to\V'$ such that $F\in\bigtriangleup$ if and only if $\pi(F)\in\Gamma$.  Consider a field $\KK$ and let $S=\KK[x_v: v\in \V]$ be a polynomial ring.   The \emph{facet ideal} $\IF(\bigtriangleup)\subset S$ is defined as
		\[
		\IF(\bigtriangleup)  =  \bigl(   \prod_{v\in F} x_v  : F\in\F(\bigtriangleup) \bigr).
		\]
	 For a  squarefree monomial ideal $I\subset S$, the \emph{Stanley--Reisner complex} $\bigtriangleup_{\mathcal{N}}(I)$ is defined as
		\[
		\bigtriangleup_{\mathcal{N}}(I) = \{\,F\subseteq \V  : \prod_{v\in F} x_v \notin I\,\}.
		\] 
		 The \emph{NF-complex} (see, \cite{HibiMahmood2020}) of $\bigtriangleup$ is defined as
		\[
		\delta_{\mathcal{NF}}(\bigtriangleup) := \bigtriangleup_{\mathcal{N}}(\IF(\bigtriangleup)),
		\]
		where $\delta_{\mathcal{NF}}$ is the NF-operator.
		Let $\delta_{\mathcal{NF}}^{(0)}(\bigtriangleup)=\bigtriangleup$ and $\delta_{\mathcal{NF}}^{(k)}(\bigtriangleup)=\delta_{\mathcal{NF}}\!\left(\delta_{\mathcal{NF}}^{(k-1)}(\bigtriangleup)\right)$, where $k$ is a positive integer. The \emph{NF-number} of $\bigtriangleup$ is the smallest integer $q>0$, such that $\delta_{\mathcal{NF}}^{(q)}(\bigtriangleup)\cong \bigtriangleup$. \medskip

		A simple graph is an ordered pair $G=(V,E)$, where $V$ is a nonempty set of \emph{vertices} (or nodes) and $E \subseteq \{\{u,v\}\mid u,v \in V, u\neq v\}$ is a set of \emph{edges} representing connections between vertices. The \emph{degree} of a vertex $v\in V$, denoted by $d(v)$, is the number of edges incident to $v$ and is a key local parameter used in analyzing network connectivity. An edge between two distinct vertices $u$ and $v$ is denoted by $uv$ or $\{u,v\}.$ A \emph{vertex cover} is a subset $C \subseteq V$ such that every edge in $E$ has at least one endpoint in $C$, and the minimum size of such a set is called the \emph{vertex cover number}, denoted by $\vartheta(G)$. A \emph{dominating set} is a subset $D \subseteq V$ such that every vertex in $V\setminus D$ is adjacent to at least one vertex in $D$, and the minimum cardinality of a dominating set is the \emph{domination number}, denoted $\varUpsilon(G)$, see \cite{West2001}. 
		
	\medskip

	The paper is organized as: In Section \ref{section 2}, we discuss basics of $\delta_{\mathcal{NF}}$, vertex covers and some existing results. Section \ref{section 3} presents simplicial complexes, results related to NF-numbers of dumbbell graphs, and their other properties.  Section \ref{section 4} discuss the  NF-number of complete split graph $S_{n,m}$, and we show that  its NF-number is $n+m+2$. Section \ref{section 5} gives the NF-number of double star $D_{p,q}$, and in general we prove that $\mathcal{NF}(D_{p,q})=p+q+4.$ We end the article with conclusion and future research directions.
	\section{Vertex covers and a working formula for $\delta_{\mathcal{NF}}$}\label{section 2}
	At first, a question comes in mind, weather such period $q$ exist, and  $\delta_{\mathcal{NF}}^{(q)}(\bigtriangleup)\cong \bigtriangleup$. The following lemma shows the existence of period $q$  \cite{HibiMahmood2020}, and hence, the NF-number of $\bigtriangleup$ is well-defined.
	\begin{lemma}
		For each simplicial complex $\bigtriangleup$ on a finite vertex set, there exists $q>0$ such that $\delta_{\mathcal{NF}}^{(q)}(\bigtriangleup)=\bigtriangleup$.
	\end{lemma}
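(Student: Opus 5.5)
The plan is to show that $\delta_{\mathcal{NF}}$ is a \emph{bijection} on a finite set. It then follows from elementary group theory that every element has finite order. Fix the vertex set $\V$ with $|\V|=n$. Let $\mathcal{C}(\V)$ be the set of all simplicial complexes on $\V$. To make the operator total, I include the degenerate cases: the complex $\{\emptyset\}$, whose facet ideal is $(1)=S$, and the void complex with no faces, whose facet ideal is $(0)$. Then $\mathcal{C}(\V)$ is finite, since it is a subset of the power set of $2^{\V}$. Moreover, $\delta_{\mathcal{NF}}$ maps $\mathcal{C}(\V)$ into itself: $\bigtriangleup_{\mathcal{N}}(I)$ is closed under taking subsets because $I$ is an ideal, and if $\prod_{v\in F}x_v\notin I$ then no divisor of it lies in $I$.

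Next I factor $\delta_{\mathcal{NF}}$ as a composite of two bijections, each passing through the set $\mathcal{I}(\V)$ of squarefree monomial ideals of $S$ (including $(0)$ and $S$).

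\textbf{First map:} $\bigtriangleup\mapsto \IF(\bigtriangleup)$. The facets $\F(\bigtriangleup)$ form an antichain in $2^{\V}$, so the monomials $\prod_{v\in F}x_v$ for $F\in\F(\bigtriangleup)$ are pairwise non-divisible. Hence they are exactly the minimal monomial generators of $\IF(\bigtriangleup)$. Conversely, any squarefree monomial ideal has a unique minimal generating set, and the supports of these generators form an antichain, which is the facet set of a unique complex. So $\bigtriangleup\mapsto\IF(\bigtriangleup)$ is a bijection $\mathcal{C}(\V)\to\mathcal{I}(\V)$.

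\textbf{Second map:} $I\mapsto\bigtriangleup_{\mathcal{N}}(I)$. This is the usual Stanley--Reisner correspondence. Its inverse sends $\Gamma$ to the ideal generated by $\prod_{v\in F}x_v$ over the nonfaces $F\notin\Gamma$. A squarefree monomial ideal is spanned by the squarefree monomials it contains, so it is recovered from the set of its squarefree monomials, and that set is exactly the complement of $\bigtriangleup_{\mathcal{N}}(I)$. This gives a bijection $\mathcal{I}(\V)\to\mathcal{C}(\V)$.

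Composing the two maps, $\delta_{\mathcal{NF}}$ is a permutation of the finite set $\mathcal{C}(\V)$. It therefore lies in the finite symmetric group on $\mathcal{C}(\V)$ and has some finite order $N>0$. Consequently $\delta_{\mathcal{NF}}^{(N)}(\bigtriangleup)=\bigtriangleup$ for every $\bigtriangleup$. Alternatively, the orbit of a given $\bigtriangleup$ is a cycle, and its length serves as $q$. This gives genuine equality, not merely isomorphism.

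The only delicate step is the bookkeeping in the first map. One must check that the facet ideal remembers the facets exactly; this uses the antichain property and the uniqueness of minimal monomial generators. One must also handle the degenerate complexes so that the operator really is a self-map of a finite set. Without injectivity we would obtain only eventual periodicity, so injectivity is the point I would verify most carefully.
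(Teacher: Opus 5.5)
Your proof is correct and follows the same route as the paper: finitely many complexes on $\V$ plus injectivity of $\delta_{\mathcal{NF}}$ force every orbit to be a pure cycle, so the first repetition is a return to $\bigtriangleup$ itself. The only difference is that the paper cites Lemma 1.4 of Hibi--Mahmood for injectivity, while you prove it directly by writing $\delta_{\mathcal{NF}}$ as the facet-ideal bijection followed by the Stanley--Reisner bijection, and you include the void complex and $\{\emptyset\}$ so that $\delta_{\mathcal{NF}}$ really is a self-map of a finite set.
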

	
	\begin{proof}
		The sequence $\bigtriangleup,\delta_{\mathcal{NF}}(\bigtriangleup),\delta_{\mathcal{NF}}^{(2)}(\bigtriangleup),\dots$ must ultimately recur as there are only finitely many simplicial complexes on a fixed finite vertex set.
		Hibi and Mahmood \cite[Lemma 1.4]{HibiMahmood2020} prove a stronger injectivity property that: if $\delta_{\mathcal{NF}}(\bigtriangleup')=\delta_{\mathcal{NF}}(\bigtriangleup'')$, then $\bigtriangleup'=\bigtriangleup''$. As a result, the first repetition must return to the beginning point, yielding a valid period.
	\end{proof}
	
	\medskip
	A key computational tool is the characterization of facets of $\delta_{\mathcal{NF}}(\bigtriangleup)$ in terms of minimal vertex covers.
		For a simplicial complex $\bigtriangleup$ on $\V$ with facets $\F(\bigtriangleup)$, a \emph{vertex cover} of $\bigtriangleup$ is a subset $C\subseteq \V$, if $C\cap F\neq\varnothing$, for every $F\in\F(\bigtriangleup)$. If it has no proper subset that is also a vertex cover, then it is \emph{minimal vertex cover}.
		 The set of minimal vertex covers is denoted by $\MIN(\bigtriangleup)$.

\medskip
	The following result shows that the facets of $\delta_{\mathcal{NF}}(\bigtriangleup)$ are complements of minimal covers.
	\begin{lemma}\label{lem:facets-complements}
		Let $\bigtriangleup$ be a simplicial complex on vertex set $\V$. Then
		\[
		\F(\delta_{\mathcal{NF}}(\bigtriangleup)) = \{\,\V\setminus C  : C\in \MIN(\bigtriangleup)\,\}.
		\]
	\end{lemma}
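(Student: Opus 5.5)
The plan is to unwind the definitions and reduce the statement to a set-complement duality between faces of $\delta_{\mathcal{NF}}(\bigtriangleup)$ and vertex covers of $\bigtriangleup$.

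First we characterize the faces. For $G\subseteq \V$, the squarefree monomial $x_G=\prod_{v\in G}x_v$ lies in the monomial ideal $\IF(\bigtriangleup)$ if and only if it is divisible by one of the generators $x_F$, $F\in\F(\bigtriangleup)$. This uses the standard fact that a monomial lies in a monomial ideal if and only if some minimal generator divides it. Divisibility $x_F\mid x_G$ is equivalent to $F\subseteq G$ because everything is squarefree. Hence, by the definition of $\bigtriangleup_{\mathcal{N}}$,
\[
G\in \delta_{\mathcal{NF}}(\bigtriangleup) \iff F\not\subseteq G \text{ for every } F\in\F(\bigtriangleup).
\]

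Next we pass to complements. Put $C=\V\setminus G$. The condition $F\not\subseteq G$ says exactly that $F\cap C\neq\varnothing$. So $G$ is a face of $\delta_{\mathcal{NF}}(\bigtriangleup)$ if and only if $\V\setminus G$ is a vertex cover of $\bigtriangleup$.

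Finally we identify facets with minimal covers. The map $G\mapsto \V\setminus G$ is an inclusion-reversing bijection on subsets of $\V$. It restricts to a bijection between faces of $\delta_{\mathcal{NF}}(\bigtriangleup)$ and vertex covers of $\bigtriangleup$. Therefore inclusion-maximal faces correspond exactly to inclusion-minimal covers, and $\F(\delta_{\mathcal{NF}}(\bigtriangleup))=\{\V\setminus C : C\in\MIN(\bigtriangleup)\}$.

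There is no real obstacle here. The only points needing care are the monomial-ideal membership criterion, which is standard, and the degenerate cases. If $\bigtriangleup=\{\varnothing\}$, then $\IF(\bigtriangleup)=(1)$ and there are no faces. In that case the convention on covers must be matched: no subset meets $\varnothing$, so $\MIN(\bigtriangleup)=\varnothing$, and both sides are empty. I would state this edge case briefly and otherwise assume $\bigtriangleup$ has a nonempty facet.
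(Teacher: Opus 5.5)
Your proof is correct, and it reaches the result by a slightly different route from the paper.

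\textbf{The paper's route.} It starts from the known irreducible decomposition $\IF(\bigtriangleup)=\bigcap_{C\in\MIN(\bigtriangleup)}(x_v : v\in C)$, citing Herzog--Hibi. From this it reads off that $F$ is a face of $\delta_{\mathcal{NF}}(\bigtriangleup)$ if and only if $F\cap C=\varnothing$ for some minimal cover $C$. It then simply asserts that the maximal such $F$ are the sets $\V\setminus C$.

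\textbf{Your route.} You work directly with the generators of the facet ideal. Since $x_G\in\IF(\bigtriangleup)$ exactly when some facet is contained in $G$, the faces of $\delta_{\mathcal{NF}}(\bigtriangleup)$ are precisely the complements of \emph{all} vertex covers, minimal or not. The inclusion-reversing bijection $G\mapsto\V\setminus G$ then matches maximal faces with minimal covers.

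\textbf{Comparison.} Your version is self-contained; in effect it proves the decomposition the paper cites. It also makes the final maximality step airtight. The paper's last sentence tacitly uses two facts: every face lies in some $\V\setminus C$, and distinct minimal covers are incomparable, so no $\V\setminus C$ is contained in another. The paper's route is shorter once the decomposition is granted. It also ties the lemma to the algebraic picture, namely that the minimal primes of the facet ideal are the cover primes.

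Your separate treatment of $\bigtriangleup=\{\varnothing\}$ is correct but unnecessary. The bijection argument covers every case verbatim, so you do not need to assume that $\bigtriangleup$ has a nonempty facet.
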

	
	\begin{proof}
		In facet-ideal theory, it is well-known that $\IF(\bigtriangleup)$ has a (irreducible) decomposition \cite[p. 12]{HerzogHibi2011}
		\[
		\IF(\bigtriangleup) = \bigcap_{C\in\MIN(\bigtriangleup)} (x_v : v\in C).
		\]
		A set $F\subseteq\V$ is in $\bigtriangleup_{\mathcal{N}}(\IF(\bigtriangleup))$ if and only if $\prod_{v\in F}x_v \notin \IF(\bigtriangleup)$, meaning it does not belong to at least one prime $(x_v:v\in C)$ in the intersection. This happens precisely when $F\cap C=\varnothing$ for some $C\in\MIN(\bigtriangleup)$. 
		Maximal such $F$ are the exact complements of minimal covers.
	\end{proof}
	
	\medskip
	Also, if $\bigtriangleup\cong\varGamma$ then $\delta_{\mathcal{NF}}^{(k)}(\bigtriangleup)\cong\delta_{\mathcal{NF}}^{(k)}(\varGamma)$ for all $k$, hence $\mathcal{NF}(\bigtriangleup)=\mathcal{NF}(\varGamma)$. An isomorphism renames variables in $S$ and maps facet ideals and Stanley-Reisner complexes to each other.
	
	\medskip
	The following result gives the dimension of $\delta_{\mathcal{NF}}(\bigtriangleup)$  via minimum cover size.
	\begin{proposition}\label{prop:dim}
		Let $\vartheta(\bigtriangleup)=\min\{|C|:C\in\MIN(\bigtriangleup)\}$ be the vertex cover number of $\bigtriangleup$.
		Then $\dim \delta_{\mathcal{NF}}(\bigtriangleup) = |\V|-\vartheta(\bigtriangleup)-1.$ 
	\end{proposition}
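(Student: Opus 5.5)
The plan is to read the dimension straight off the facet description in Lemma~\ref{lem:facets-complements}. By definition, $\dim\Gamma=\max\{|F|:F\in\F(\Gamma)\}-1$ for a nonempty complex $\Gamma$. Applying this to $\Gamma=\delta_{\mathcal{NF}}(\bigtriangleup)$ and using Lemma~\ref{lem:facets-complements}, the facets of $\delta_{\mathcal{NF}}(\bigtriangleup)$ are exactly the sets $\V\setminus C$ with $C\in\MIN(\bigtriangleup)$. Each such facet has cardinality $|\V|-|C|$. Hence
\[
\dim \delta_{\mathcal{NF}}(\bigtriangleup)=\max_{C\in\MIN(\bigtriangleup)}\bigl(|\V|-|C|\bigr)-1=|\V|-\min_{C\in\MIN(\bigtriangleup)}|C|-1=|\V|-\vartheta(\bigtriangleup)-1 .
\]

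The one point to justify is that the minimum over \emph{minimal} covers equals the minimum over \emph{all} covers. This matters only if one reads $\vartheta$ as the usual vertex cover number; the statement itself defines $\vartheta$ over $\MIN$. Every vertex cover contains a minimal one, since $\V$ is finite and we can delete vertices while the covering property persists. A minimum-size cover is therefore automatically minimal, so the two minima agree.

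I expect the only obstacle to be the degenerate cases, which I would address in a short remark. First, $\MIN(\bigtriangleup)$ must be nonempty, so that $\delta_{\mathcal{NF}}(\bigtriangleup)$ has facets and the maximum is taken over a nonempty set. This holds because $\V$ itself is a cover whenever every facet is nonempty. If $\bigtriangleup=\{\varnothing\}$, the facet ideal is the unit ideal and the conventions break down. I would therefore assume, as the paper implicitly does, that $\bigtriangleup$ has at least one nonempty facet. Second, if $\vartheta(\bigtriangleup)=|\V|$, the formula gives dimension $-1$. This is consistent, since the complex is then $\{\varnothing\}$.
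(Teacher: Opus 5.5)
Your proof is correct and is the paper's argument: both read the facet sizes $|\V|-|C|$, for $C\in\MIN(\bigtriangleup)$, off Lemma~\ref{lem:facets-complements} and take the largest one. Your extra remarks are accurate points the paper leaves implicit: that a minimum-size cover is automatically minimal, and the degenerate cases $\bigtriangleup=\{\varnothing\}$ and $\vartheta(\bigtriangleup)=|\V|$.
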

	
	\begin{proof}
		By Lemma~\ref{lem:facets-complements}, facets of $\delta_{\mathcal{NF}}(\bigtriangleup)$ have sizes $|\V|-|C|$, for $C\in\MIN(\bigtriangleup)$.
		The largest facet size is $|\V|-\vartheta(\bigtriangleup)$, so the dimension is one less.
	\end{proof}

	\section{Graphs as simplicial complexes}\label{section 3}
	
	 Let $G=(\V,E)$ be a finite simple graph. Regard $G$ as a simplicial complex $\bigtriangleup(G)$ of dimension $1$ with facets as edges	$\F(\bigtriangleup(G))=\{\{u,v\}:uv\in E\}$.
	Then a vertex cover of $\bigtriangleup(G)$ is exactly a vertex cover of the graph $G$. Lemma~\ref{lem:facets-complements} states that facets of $\delta_{\mathcal{NF}}(G)$ are complements of minimal graph vertex covers of $G$. Thus, $\delta_{\mathcal{NF}}(G)$ is usually not $1$-dimensional, and it can have facets of various sizes. 
	
	\medskip
	 According to Hibi and Mahmood \cite{HibiMahmood2020}, the NF-number of the complete graph $K_n$ is $n+1$, while the disjoint union $K_n\cup K_m$ with $n,m\ge 2$ and $(n,m)\neq(2,2)$ has an NF-number of $n+m+2$. The NF-number of the two complete graphs with their one fused vertex and that of disjoint union of cliques can be see in \cite{bilalhafiz,bilalhafiz1}. It is very non-trivial to find NF-number of a general graph. With motivation from the above authors, we carry this work forward for several well known families of graphs.

	\medskip
	  For integers \(n,m\ge 2\), let \(\A=\{x_1,\dots,x_n\}\) and \(\B=\{y_1,\dots,y_m\}\) be disjoint sets, denote by \(K(\A)\) the clique on \(\A\) and by \(K(\B)\) the clique on \(\B\), and choose any \(x_\alpha\in \A\) and any \(y_\beta\in \B\). The graph that results from adding  edges $x_\alpha y_\beta$ to the disjoint union $K(\A)\cup K(\B)$ is called the \emph{bridge graph} $B_{n,m}^{\alpha,\beta}$. If we add a single edge, that is,  when $\alpha=\beta=1$, then $B_{n,m}$ is called the \emph{dumbbell graph}. A dumbbell graph $B_{n,m}$ is depicted in Figure \ref{dumbell}, with solid edges denoting fully drawn adjacency and dotted edges denoting additional clique edges. Although, some authors denote dumbbell graph by $D_{n,m}$, but we keep $ B_{n,m}$, since it is special case of bridge graph $B_{n,m}^{\alpha,\beta}$.
	
	Since any two options $(\alpha,\beta)$ and $(\alpha',\beta')$ result in isomorphic graphs, the position of the bridge is irrelevant. Each clique's permutations convey $x_\alpha\mapsto x_{\alpha'}$ and $y_\beta\mapsto y_{\beta'}$. Therefore, $\mathcal{NF}(B_{n,m}^{\alpha,\beta})$ depends only on $(n,m)$.
	
	\begin{figure}[h]
		\centering
		\begin{tikzpicture}[
			scale=1,
			vertex/.style={circle, draw, thick, minimum size=6mm, inner sep=0pt, font=\small},
			edge/.style={draw, thick},
			bridge/.style={draw, very thick},
			ghost/.style={circle, draw, thick, dashed, minimum size=6mm, inner sep=0pt, font=\small},
			lab/.style={font=\small}
			]
			
			\node[vertex] (x1) at (0,0) {$x_1$};
			\node[vertex] (x2) at (-1.35,0.95) {$x_3$};
			\node[vertex] (x3) at (-1.35,-0.95) {$x_2$};
			\node[ghost]  (xk) at (-0.15,1.55) {$\cdots$};
			
			\draw[edge] (x1)--(x2);
			\draw[edge] (x1)--(x3);
			\draw[edge] (x2)--(x3);
			\draw[edge, densely dotted] (x2)--(xk);
			\draw[edge, densely dotted] (x3)--(xk);
			\draw[edge, densely dotted] (x1)--(xk);
			
			\node[vertex] (y1) at (5,0) {$y_1$};
			\node[vertex] (y2) at (6.35,0.95) {$y_3$};
			\node[vertex] (y3) at (6.35,-0.95) {$y_2$};
			\node[ghost]  (yk) at (5.15,1.55) {$\cdots$};
			
			\draw[edge] (y1)--(y2);
			\draw[edge] (y1)--(y3);
			\draw[edge] (y2)--(y3);
			\draw[edge, densely dotted] (y2)--(yk);
			\draw[edge, densely dotted] (y3)--(yk);
			\draw[edge, densely dotted] (y1)--(yk);
			
			\draw[bridge] (x1)--(y1);
			
			\node[lab] at (-1.15,-1.75) {$K_n$ on $\{x_1,\dots,x_n\}$};
			\node[lab] at (6.15,-1.75) {$K_m$ on $\{y_1,\dots,y_m\}$};

		\end{tikzpicture}
		\caption{Dumbbell graph $B_{n,m}$: two cliques joined by the bridge $x_1y_1$.}
		\label{dumbell}
	\end{figure}

	 Throughout, we write the vertex set of $B_{n,m}$ as $\V=\A\cup\B$. The following is a well known fact and can be seen in any standard textbook.	
	\begin{lemma}
		In a clique $K_{r}$ with $r\ge 2$ vertices, the minimal vertex covers are exactly the complements of single vertices and so have size $r-1$.
	\end{lemma}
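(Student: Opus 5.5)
The plan is to prove the two directions separately: first that every complement of a single vertex is a minimal vertex cover of $K_r$, and then that every minimal vertex cover of $K_r$ has this form. Write $K_r$ on the vertex set $\V$ with $|\V|=r\ge 2$. Recall that a set $C$ is a vertex cover exactly when no edge has both endpoints outside $C$. Since every pair of distinct vertices is an edge in $K_r$, a set $C\subseteq \V$ is a vertex cover if and only if $|\V\setminus C|\le 1$. This reformulation is the key observation, and I would establish it first.

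To justify the reformulation, argue both ways. If two distinct vertices $u,v$ lie outside $C$, then the edge $uv$ is uncovered. Conversely, if at most one vertex lies outside $C$, then every edge $uv$ with $u\neq v$ has an endpoint in $C$.

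From this the classification is immediate.
\begin{enumerate}
\item For each $v\in\V$, the set $\V\setminus\{v\}$ is a cover, because its complement has size $1$.
\item It is minimal: removing any further vertex $w$ leaves a complement $\{v,w\}$ of size $2$, so the edge $vw$ becomes uncovered.
\item Conversely, every cover $C$ either equals $\V$ or equals $\V\setminus\{v\}$ for some $v$.
\item The set $\V$ is not minimal, since it properly contains $\V\setminus\{v\}$ for any $v$; this is where $r\ge 2$ guarantees a vertex $v$ exists and that $\V\setminus\{v\}$ is a proper subset.
\end{enumerate}
Hence the minimal covers are exactly the $r$ sets $\V\setminus\{v\}$, each of size $r-1$.

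I expect no real obstacle here. The only point needing care is the edge case $r=2$. There $\V\setminus\{v\}$ is a single vertex, and it does cover the unique edge, so the argument goes through unchanged. As a consistency check, Lemma~\ref{lem:facets-complements} then shows that $\delta_{\mathcal{NF}}(K_r)$ has facets equal to the single vertices, which agrees with the known behaviour of $K_n$.
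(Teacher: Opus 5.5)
Your proof is correct. The paper gives no proof of this lemma and cites it as a standard fact. Your key observation, that a set covers $K_r$ exactly when it misses at most one vertex, is the same "two omitted vertices leave an edge uncovered" argument the paper uses in Claim~1 of Lemma~\ref{lem:mincov-dumbbell}.
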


	The next lemma gives the minimal vertex covers of dumbbell graph $B_{n,m}$.
	\begin{lemma}\label{lem:mincov-dumbbell}
		Let $G=B_{n,m}$ with its vertex set written as
		$\A=\{x_1,\dots,x_n\}$, $\B=\{y_1,\dots,y_m\}$, and the unique bridge edge is $x_1y_1$.
		Then a subset $C\subseteq \V$ is a minimal vertex cover of $G$ if and only if
		$C  =  (\A\setminus\{x_i\})  \cup (\B\setminus\{y_j\})$ for some pair $(i,j)\neq(1,1).$ 
	\end{lemma}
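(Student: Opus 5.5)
The plan is to prove both directions by looking at how a cover $C$ meets each clique, using the preceding lemma on cliques. The edge set of $G=B_{n,m}$ splits as $E(K(\A))\cup E(K(\B))\cup\{x_1y_1\}$. So $C$ is a vertex cover of $G$ exactly when three things hold: $C\cap\A$ covers $K(\A)$, $C\cap\B$ covers $K(\B)$, and $C\cap\{x_1,y_1\}\neq\varnothing$.

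A vertex cover of a clique $K_r$ must miss at most one vertex, since two missed vertices would span an uncovered edge. Hence $|\A\setminus C|\le 1$ and $|\B\setminus C|\le 1$.

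\textbf{Minimal covers have the stated form.} Let $C$ be a minimal vertex cover.

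First, $C$ misses exactly one vertex of $\A$. Suppose instead $\A\subseteq C$. Then $C\cap\{x_1,y_1\}\ni x_1$, so the bridge is covered by $x_1$. Pick some $x_i\in\A$, which is possible because $n\ge 2$:
\begin{itemize}[label=$\circ$]
\item If $i\neq 1$, removing $x_i$ from $C$ still leaves a cover. The clique $K(\A)$ stays covered, since only $x_i$ is missed, and the bridge is still covered by $x_1$.
\item If $i=1$, removing $x_1$ keeps $K(\A)$ covered. The bridge is then covered provided $y_1\in C$.
\end{itemize}
To avoid the case analysis, always remove some $x_i$ with $i\ne1$, which exists since $n\ge2$. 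This contradicts minimality. So $|\A\setminus C|=1$, and symmetrically $|\B\setminus C|=1$.

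Write $C=(\A\setminus\{x_i\})\cup(\B\setminus\{y_j\})$. The bridge condition forces $x_1\in C$ or $y_1\in C$, which is exactly $(i,j)\neq(1,1)$.

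\textbf{Every such set is a minimal cover.} Let $C=(\A\setminus\{x_i\})\cup(\B\setminus\{y_j\})$ with $(i,j)\ne(1,1)$.
\begin{itemize}[label=$\circ$]
\item $C$ is a cover. Each clique misses only one vertex, so both cliques are covered, and $(i,j)\neq(1,1)$ covers the bridge.
\item $C$ is minimal. Removing any further vertex $x_k\in C\cap\A$ leaves two vertices $x_i,x_k$ of $K(\A)$ outside the cover, so the edge $x_ix_k$ is uncovered. The same argument applies to $\B$.
\end{itemize}

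Note that the bridge plays no role in the minimality check. Minimality comes entirely from the cliques, each of which already forces at most one missed vertex.

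The only delicate point is the first direction's argument that a minimal cover cannot contain all of $\A$. This is resolved by removing a vertex $x_i$ with $i\neq1$, which never affects the bridge. It relies on $n,m\ge 2$.
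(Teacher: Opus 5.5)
Your proposal is correct and follows the paper's proof essentially step by step. Both arguments first show that each clique misses at most one vertex. Both then rule out $A\subseteq C$ (and $B\subseteq C$) by deleting a vertex $x_t$ with $t\neq 1$ (resp.\ $y_s$ with $s\neq 1$), and finally check that each $(A\setminus\{x_i\})\cup(B\setminus\{y_j\})$ is a cover and is minimal. Your write-up also makes explicit that the bridge forces $(i,j)\neq(1,1)$ in the forward direction, which the paper leaves implicit.
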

	
	\begin{proof} Assume $C$ is a minimal vertex cover of $G$. We consider the following cases.
		
		\smallskip\noindent
		\textbf{Claim 1.} $|C\cap \A|\ge n-1$ and $|C\cap \B|\ge m-1$.
		Now, to prove it, suppose that $|C\cap \A|\le n-2$. Then there exist two distinct vertices $x_r,x_s\in \A\setminus C$.
		Since $\A$ is a clique, the edge $x_rx_s\in E(G)$, but neither endpoint lies in $C$. So, the edge $x_rx_s$ is not covered, which contradicts the claim that $C$ is a vertex cover. Thus, we must have $|C\cap \A|\ge n-1$. A similar reason is valid for  $\B$. 
		
		\smallskip\noindent
		\textbf{Claim 2.} There exist indices $i\in\{1,\dots,n\}$ and $j\in\{1,\dots,m\}$, such that
		 $C\cap \A=\A\setminus\{x_i\}$ and $C\cap \B=\B\setminus\{y_j\}.$  For proof, we note by Claim 1, $C\cap \A$ contains at least $n-1$ vertices of $\A$. Hence,  either $C\cap \A=\A$ or $C\cap \A=\A\setminus\{x_i\}$ for a unique $i$.
		 First, we show that $C\cap \A\neq \A$. Suppose, for contradiction, that $C\cap \A=\A$. Since $n\ge 2$, choose a vertex $x_t\in \A$ with $t\neq 1$, and set $C'=C\setminus\{x_t\}$. We claim that $C'$ is still a vertex cover of $G$.
		 Indeed, every edge inside the clique on $\A$ that is incident to $x_t$ has its other endpoint in $\A\setminus\{x_t\}\subseteq C'$. All other edges inside $\A$ are unaffected. The bridge edge $x_1y_1$ is also unaffected, since $x_t\neq x_1$. Edges inside the clique on $\B$ are unchanged as well. Hence, every edge of $G$ remains covered by $C'$, contradicting the minimality of $C$. Therefore $C\cap \A\neq \A$, and so
		$
		C\cap \A=\A\setminus\{x_i\}
		$ 	for some $i\in\{1,\dots,n\}$.
		
		Similarly, by Claim 1, either $C\cap \B=\B$ or $C\cap \B=\B\setminus\{y_j\}$ for a unique $j$. If $C\cap \B=\B$, then, since $m\ge 2$, choose $y_s\in \B$ with $s\neq 1$ and set $C''=C\setminus\{y_s\}$. Every edge inside the clique on $\B$ incident to $y_s$ remains covered by its other endpoint, and the bridge edge $x_1y_1$ is unaffected because $y_s\neq y_1$. All other edges are unchanged. Thus, $C''$ is still a vertex cover, contradicting the minimality of $C$. Therefore, we get 
		$
		C\cap \B=\B\setminus\{y_j\}
		$
		for some $j\in\{1,\dots,m\}$. 
		
		\smallskip\noindent
		\textbf{Claim 3.} $C_{i,j}$ is a vertex cover. Every edge of $G$ is either:
		(i) an edge inside the clique on $\A$,
		(ii) an edge inside the clique on $\B$, or
		(iii) the bridge edge $x_1y_1$.
		For (i), any edge inside the clique on $\A$ has endpoints $x_r,x_s$; since $C_{i,j}$ omits only $x_i$ from $\A$,
		at least one of $x_r,x_s$ is in $\A\setminus\{x_i\}\subseteq C_{i,j}$, so the edge is covered.
		Similarly, all edges of type (ii) are covered.
		For (iii), as $(i,j)\neq(1,1)$, at least one of $i\neq 1$ or $j\neq 1$ holds, hence $x_1\in C_{i,j}$ or $y_1\in C_{i,j}$.
		Therefore, the bridge edge is covered. Thus, $C_{i,j}$ is a vertex cover. 
		
		\smallskip\noindent
		\textbf{Claim 4.} $C_{i,j}$ is minimal. Let $v\in C_{i,j}$. Then we show that $C_{i,j}\setminus\{v\}$ is not a vertex cover.
		 If $v\in \A\setminus\{x_i\}$, then $v$ is adjacent to $x_i$, since $\A$ is a clique.
		Moreover, by construction $x_i\notin C_{i,j}$, and hence, the edge $vx_i$ has neither endpoint in $C_{i,j}\setminus\{v\}$,
		so it becomes uncovered. Thus ,removing $v$ destroys the vertex cover property.
		 	If $v\in \B\setminus\{y_j\}$, similarly the edge $vy_j$ (which lies in the clique on $\B$) becomes uncovered upon removing $v$,
		since $y_j\notin C_{i,j}$. In either case, no vertex can be removed while preserving the vertex cover property. Hence, $C_{i,j}$ is minimal. Thus,  $C_{i,j}\in \MIN(G)$, completing the proof.
	\end{proof}
	
	\medskip
	The following result gives the first iterate $\delta_{\mathcal{NF}}(B_{n,m})$ of the dumbbell graph.	
	\begin{theorem}\label{thm:delta1}
		If $G=B_{n,m}$ is a dumbbell graph, then $\delta_{\mathcal{NF}}(G)$ is a graph whose edges are exactly
		$E(\delta_{\mathcal{NF}}(G))  =  \bigl\{  x_i y_j  :  1\le i\le n, 1\le j\le m, (i,j)\neq(1,1) \bigr\}.$
		Equivalently, $\delta_{\mathcal{NF}}(G)\cong K_{n,m}\setminus\{x_1y_1\}$.
	\end{theorem}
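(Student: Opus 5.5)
The plan is to combine Lemma~\ref{lem:facets-complements} with the explicit list of minimal vertex covers from Lemma~\ref{lem:mincov-dumbbell}, and then check that the resulting complex really is the stated graph.

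First, by Lemma~\ref{lem:mincov-dumbbell}, $\MIN(B_{n,m})$ consists exactly of the sets
\[
C_{i,j}=(\A\setminus\{x_i\})\cup(\B\setminus\{y_j\}), \qquad (i,j)\neq(1,1).
\]
Their complements in $\V=\A\cup\B$ are the two-element sets $\V\setminus C_{i,j}=\{x_i,y_j\}$. Lemma~\ref{lem:facets-complements} then gives
\[
\F(\delta_{\mathcal{NF}}(G))=\bigl\{\{x_i,y_j\} : (i,j)\neq(1,1)\bigr\}.
\]

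Next I would check that $\delta_{\mathcal{NF}}(G)$ is genuinely a graph on the full vertex set. All facets have size $2$, so the complex is pure of dimension $1$, consistent with Proposition~\ref{prop:dim}, since $\vartheta(G)=n+m-2$. I must also make sure no vertex becomes isolated or drops out of the complex, since that would break the identification with $K_{n,m}\setminus\{x_1y_1\}$ on $n+m$ vertices. Because $n,m\ge 2$:
\begin{itemize}[noitemsep]
\item $x_1$ lies in the facet $\{x_1,y_2\}$;
\item $y_1$ lies in the facet $\{x_2,y_1\}$;
\item every other vertex $x_i$ or $y_j$ lies in some facet $\{x_i,y_j\}$ with $(i,j)\neq(1,1)$.
\end{itemize}
Hence the facets are exactly the listed edges $x_iy_j$, with every vertex covered.

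Finally, these edges are precisely all pairs joining $\A$ to $\B$ except $x_1y_1$. The identity map on $\V$ is therefore an isomorphism $\delta_{\mathcal{NF}}(G)\cong K_{n,m}\setminus\{x_1y_1\}$.

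No step is a real obstacle: the combinatorics is all contained in Lemma~\ref{lem:mincov-dumbbell}. The only subtlety is confirming that every vertex still appears in a facet, which is exactly where the hypothesis $n,m\ge2$ is used.
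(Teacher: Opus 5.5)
Your proposal is correct and is essentially the paper's proof: feed the list $\MIN(B_{n,m})=\{C_{i,j}:(i,j)\neq(1,1)\}$ from Lemma~\ref{lem:mincov-dumbbell} into Lemma~\ref{lem:facets-complements}, compute $\V\setminus C_{i,j}=\{x_i,y_j\}$, and read off $K_{n,m}\setminus\{x_1y_1\}$. Your extra check that no vertex drops out is harmless but automatic: every singleton is a face because $\IF(G)$ is generated in degree $2$, so it lies in one of the listed facets. The hypothesis $n,m\ge 2$ is actually used earlier, in Lemma~\ref{lem:mincov-dumbbell}, rather than in that check.
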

	
	\begin{proof}
		 By Lemma~\ref{lem:facets-complements}, the facets of $\delta_{\mathcal{NF}}(\bigtriangleup(G))$ are exactly the complements of the minimal
		vertex covers of $\bigtriangleup(G)$ (equivalently, of $G$), that is,
		\[
		\F\bigl(\delta_{\mathcal{NF}}(G)\bigr)=\{ \V\setminus C  : C\in \MIN(G) \}.
		\]
		By Lemma~\ref{lem:mincov-dumbbell}, the minimal vertex covers of $G$ are precisely the sets
		 $C_{i,j}  =  (\A\setminus\{x_i\})\cup(\B\setminus\{y_j\})$, for $(i,j)\neq(1,1).$ For such a pair $(i,j)\neq(1,1)$, we compute the complement in $\V=\A\cup\B$
		\[
		\V\setminus C_{i,j}
		=
		(\A\cup\B)\setminus\bigl((\A\setminus\{x_i\})\cup(\B\setminus\{y_j\})\bigr)
		=
		\{x_i\}\cup\{y_j\}
		=
		\{x_i,y_j\}.
		\]
		Thus, every facet of $\delta_{\mathcal{NF}}(G)$ is a $2$-element set, hence $\delta_{\mathcal{NF}}(G)$ is $1$-dimensional (that is a graph), and its edge set is $E(\delta_{\mathcal{NF}}(G))=\bigl\{\{x_i,y_j\} : (i,j)\neq(1,1)\bigr\}.$ Finally, $E(\delta_{\mathcal{NF}}(G))$ has all cross edges between $\A$ and $\B$ except $\{x_1,y_1\}$, and contains no edges within $\A$ or $\B$. Thus, $\delta_{\mathcal{NF}}(G)$ is the complete bipartite graph $K_{n,m}$ with a single missing edge $\{x_1,y_1\}$, (see Figure \ref{dumbell1}), or equivalently, $\delta_{\mathcal{NF}}(G) \cong K_{n,m}\setminus\{x_1y_1\}.$ 
	\end{proof}

	\begin{figure}[H]
		\centering
		\begin{tikzpicture}[scale=1, every node/.style={circle, draw, inner sep=1.5pt}]
			\node (x1) at (0,1.2) {$x_1$};
			\node (x2) at (0,0.4) {$x_2$};
			\node (x3) at (0,-0.4) {$\vdots$};
			\node (xn) at (0,-1.2) {$x_n$};
			
			\node (y1) at (4,1.2) {$y_1$};
			\node (y2) at (4,0.4) {$y_2$};
			\node (y3) at (4,-0.4) {$\vdots$};
			\node (ym) at (4,-1.2) {$y_m$};
			
			\foreach \xx in {x1,x2,xn}{
				\foreach \yy in {y2,ym}{
					\draw (\xx)--(\yy);
				}
			}
			\draw (x2)--(y1);
			\draw (xn)--(y1);
			\draw[densely dashed] (x1)--(y1); 
			
			\node[draw=none, rectangle, inner sep=0pt] at (2,-2.0) {\small $\delta_{\mathcal{NF}}(B_{n,m}) \cong K_{n,m}$ with the single edge $x_1y_1$ missing.};
		\end{tikzpicture}
		\caption{Schematic of $\delta_{\mathcal{NF}}(B_{n,m})$. The dashed edge is \emph{not} present.}
		\label{dumbell1}
	\end{figure}
	
	\medskip
	 The second iteration is already higher-dimensional, with an especially stiff facet structure. First, we find the minimal vertex cover of the graph shown in Figure \ref{dumbell1}.
	\begin{lemma}\label{lem:mincov-bip-minus}
		Let $\A=\{x_1,\dots,x_n\}$ and $\B=\{y_1,\dots,y_m\}$ with $n,m\ge 2$.
		Let $H$ be the bipartite graph on $\A\cup\B$ whose edge set is
		\[
		E(H)=\bigl\{ x_i y_j: 1\le i\le n, 1\le j\le m, (i,j)\neq(1,1) \bigr\}
		=E(K_{n,m})\setminus\{x_1y_1\}.
		\]
		The minimal vertex covers of $H$ are precisely the following three sets $\A,  \B,$ and $ (\A\setminus\{x_1\})\cup(\B\setminus\{y_1\}).$ 
	\end{lemma}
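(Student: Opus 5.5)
The plan is a two-directional argument: first show that each of the three listed sets is a minimal vertex cover of $H$, then show that every minimal vertex cover $C$ of $H$ equals one of them. The organizing observation is that $H$ contains the complete bipartite graph between $\A\setminus\{x_1\}$ and $\B$, and also the one between $\A$ and $\B\setminus\{y_1\}$. So $H$ is $K_{n,m}$ with only the single edge $x_1y_1$ removed.

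For the first direction, $\A$ and $\B$ are covers because every edge of $H$ joins $\A$ to $\B$. For minimality of $\A$, removing any $x_i$ leaves an edge uncovered: if $i\neq 1$ take the edge $x_iy_1$, and if $i=1$ take $x_1y_2$, which exists since $m\ge 2$. The set $\B$ is handled symmetrically. For $C_0=(\A\setminus\{x_1\})\cup(\B\setminus\{y_1\})$, an edge $x_iy_j$ with $(i,j)\neq(1,1)$ has $i\neq1$ or $j\neq1$, so it is covered. For minimality of $C_0$, removing $x_i$ with $i\neq1$ uncovers $x_iy_1$, because $y_1\notin C_0$; removing $y_j$ with $j\neq 1$ uncovers $x_1y_j$ in the same way.

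For the converse, let $C$ be a minimal cover. I will split into cases according to whether $C$ misses some vertex of $\A\setminus\{x_1\}$ or some vertex of $\B\setminus\{y_1\}$.
\begin{itemize}
\item If some $x_i\notin C$ with $i\neq1$, then all of $\B$ lies in $C$, because $x_i$ is adjacent to every $y_j$. Since $\B$ is itself a cover, minimality forces $C=\B$.
\item Symmetrically, if some $y_j\notin C$ with $j\neq1$, then $C=\A$.
\item Otherwise $C\supseteq(\A\setminus\{x_1\})\cup(\B\setminus\{y_1\})=C_0$. Since $C_0$ is a cover, minimality gives $C=C_0$.
\end{itemize}

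The main step needing care is this converse case analysis, specifically making sure the cases are exhaustive. They are, because a set either omits some vertex outside $\{x_1,y_1\}$ or contains all such vertices. The remaining detail is that the hypothesis $n,m\ge2$ is used for the minimality of $\A$ and $\B$, as noted in the first direction.
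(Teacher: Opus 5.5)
Your proof is correct and follows essentially the same route as the paper: you check that $A$, $B$, and $(A\setminus\{x_1\})\cup(B\setminus\{y_1\})$ are minimal covers, then use that any $x_i$ with $i\neq 1$ (resp.\ $y_j$ with $j\neq 1$) is adjacent to all of $B$ (resp.\ $A$), so minimality forces $C=B$ (resp.\ $C=A$), and otherwise $C$ contains the third cover and equals it. The only difference is presentation: you use a direct three-way case split, while the paper assumes $C\neq A,B$, argues by contradiction that the omitted vertices are $x_1$ and $y_1$, and then uses their neighbourhoods to force $(A\setminus\{x_1\})\cup(B\setminus\{y_1\})\subseteq C$.
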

	
	\begin{proof}
		We verify that each of the three sets stated in assertion is a minimal vertex cover, and then show that no more minimal vertex covers exist. We have the following steps to consider.
		
		\smallskip\noindent
		\textbf{1.} $\A$ and $\B$ are minimal vertex covers. Every edge of $H$ has one endpoint in $\A$ and one endpoint in $\B$, therefore $\A$ meets every edge, resulting in a vertex cover. Similarly, $\B$ represents a vertex cover.
		 To demonstrate that $\A$ is minimum, consider $\A'=\A\setminus\{x_i\}$.
		Since $m\ge 2$, select an index $j\in\{1,\dots,m\}$ with $j\neq 1$ (for instance, $j=2$).
		Then $(i,j)\neq (1,1)$, so $x_i y_j\in E(H)$. However, since $x_i\notin \A'$ and $\A'$ has no $y$-vertices at all, $\A'$ does not intersect the edge $x_i y_j$. Consequently, $\A$ is minimum since $\A'$ is not a vertex cover.
		The same reasoning (with $n\ge 2$) demonstrates that $\B$ is minimal, since eliminating any $y_j$ leaves uncovered an edge $x_2y_j$.
		
		\smallskip\noindent
		\textbf{2.} $C_0:=(\A\setminus\{x_1\})\cup(\B\setminus\{y_1\})$ is a minimal vertex cover.  For	cover property, let $e=x_i y_j$ be any arbitrary edge in $H$. Then $(i,j)\neq(1,1)$, which implies that either $i\neq 1$ or $j\neq 1$ is true.
		If $i\neq 1$, then $x_i\in \A\setminus\{x_1\}\subseteq C_0$, which means $e$ is covered.
		If $i=1$, then necessarily $j\neq 1$ (since $(i,j)\neq(1,1)$), and thus $y_j\in \B\setminus\{y_1\}\subseteq C_0$,
		so again $e$ is covered. Hence $C_0$ is a vertex cover.\\
		For minimality, we demonstrate how deleting any vertex from $C_0$ breaks the cover property.
		If $v=x_i$ with $i\ge 2$, then consider the edge $x_i y_1$. Since $i\ge 2$, we have $(i,1)\neq (1,1)$, so $x_i y_1\in E(H)$.
		However, $y_1\notin C_0$, and if we remove $x_i$, neither endpoint of $x_i y_1$ lies in $C_0\setminus\{x_i\}$, leaving the edge uncovered.
		Similarly, if $v=y_j$ with $j\ge 2$, then the edge $x_1 y_j$ belongs to $E(H)$ (as $(1,j)\neq(1,1)$), and since $x_1\notin C_0$,
		removing $y_j$ leaves $x_1 y_j$ uncovered. So, $C_0$ is minimal.
		
		\smallskip\noindent
		\textbf{3.} No other minimal vertex covers exist. Let $C$ be a minimal vertex cover of $H.$ 	If $C=\A$ or $C=\B$, we are done. So, assume that $C\neq \A$ and $C\neq \B$.
		Then $C$ omits at least one vertex from $\A$ and at least one vertex from $\B$, that is, there exist $x_i\in \A\setminus C$ and $y_j\in \B\setminus C$.		We claim that necessarily $i=1$ and $j=1$.
		 If $i\neq 1$, then $x_i$ is adjacent in $H$ to every vertex of $\B$, since the only missing edge is $x_1y_1$.
		Since $x_i\notin C$ and $C$ must meet every edge incident to $x_i$, it follows that \emph{every} $y\in\B$ must lie in $C$.
		Hence $\B\subseteq C$, which forces $C=\B$ by minimality, since $\B$ is itself a vertex cover,		contradicting our assumption $C\neq \B$. Therefore, $i=1$, so $x_1\notin C$. By symmetry, if $j\neq 1$, $y_j$ is adjacent to every vertex of $\A$, hence $\A\subseteq C$, requiring $C=\A$ and contradicting $C\neq\A$.	Hence $j=1$, so $y_1\notin C$.
		 Now, since $x_1\notin C$, every edge $x_1y_\ell$ that exists in $H$ must be covered by $y_\ell\in C$.
		However, the edges incident to $x_1$ in $H$ are precisely $x_1y_\ell$ for $\ell\ge 2$ (the edge $x_1y_1$ is missing), therefore $y_\ell\in C$ for all $\ell\ge 2$.
		Similarly, since $y_1\notin C$, every edge $x_k y_1$ for $k\ge 2$ must be covered by $x_k\in C$, so $x_k\in C$ for all $k\ge 2$.
		Therefore, we have $(\A\setminus\{x_1\})\cup(\B\setminus\{y_1\}) \subseteq C.$ 	However, the left-hand side is already a vertex cover (step 2), therefore by minimality of $C$, we must have equality
		 $C=(\A\setminus\{x_1\})\cup(\B\setminus\{y_1\})=C_0.$ 
		This demonstrates that the only minimal vertex covers are $\A$, $\B$, and $C_0$ as claimed.
	\end{proof}
	
	\medskip
	Now, we have the following result, which presents the second NF-iterate of the dumbbell.
	\begin{theorem}\label{thm:delta2}
		Let $G=B_{n,m}$ on $\V=\A\cup\B$. Then $\delta_{\mathcal{NF}}^{(2)}(G)$ has exactly three facets
		 $\F\!\left(\delta_{\mathcal{NF}}^{(2)}(G)\right) = \{\A, \B, \{x_1,y_1\}\}.$ 
	\end{theorem}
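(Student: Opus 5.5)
The plan is to apply Lemma~\ref{lem:facets-complements} once more, now to the first iterate. By Theorem~\ref{thm:delta1}, $\delta_{\mathcal{NF}}(G)$ is the graph $H=K_{n,m}\setminus\{x_1y_1\}$ on $\V=\A\cup\B$. Since $\delta_{\mathcal{NF}}^{(2)}(G)=\delta_{\mathcal{NF}}(H)$, Lemma~\ref{lem:facets-complements} gives
\[
\F\bigl(\delta_{\mathcal{NF}}^{(2)}(G)\bigr)=\{\V\setminus C : C\in\MIN(H)\}.
\]

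Lemma~\ref{lem:mincov-bip-minus} already lists $\MIN(H)$ completely: it consists exactly of $\A$, $\B$, and $C_0=(\A\setminus\{x_1\})\cup(\B\setminus\{y_1\})$. Taking complements in $\V$ gives the three facets:
\begin{itemize}
\item $\V\setminus\A=\B$;
\item $\V\setminus\B=\A$;
\item $\V\setminus C_0=\{x_1,y_1\}$.
\end{itemize}

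It remains to confirm that these three sets are pairwise distinct and that none contains another, so that there are exactly three facets. Two checks suffice:
\begin{itemize}
\item $\A\cap\B=\varnothing$, and both are nonempty, so neither of $\A$, $\B$ contains the other.
\item $\{x_1,y_1\}$ meets both $\A$ and $\B$, so it is contained in neither. Conversely, since $n,m\ge 2$, it contains neither $\A$ nor $\B$.
\end{itemize}
Alternatively, one can note that complements of an antichain of minimal covers automatically form an antichain.

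One point needs care. The vertex set of $\delta_{\mathcal{NF}}(G)$ must be taken to be all of $\V$, which is legitimate because every vertex of $H$ lies on some edge when $n,m\ge 2$. Otherwise the complements would be computed in the wrong ambient set.

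There is no real obstacle here. The substantive work, namely the classification of $\MIN(H)$, was done in Lemma~\ref{lem:mincov-bip-minus}. The only thing to watch is the bookkeeping of the ambient vertex set and the verification that the facet list has exactly three elements.
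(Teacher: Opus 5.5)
Your proposal is correct and follows the paper's proof essentially step for step. Identify $\delta_{\mathcal{NF}}(G)$ with $H=K_{n,m}\setminus\{x_1y_1\}$ via Theorem~\ref{thm:delta1}, read off $\MIN(H)=\{\A,\B,(\A\setminus\{x_1\})\cup(\B\setminus\{y_1\})\}$ from Lemma~\ref{lem:mincov-bip-minus}, and take complements in $\V$ using Lemma~\ref{lem:facets-complements}. Your extra checks on pairwise incomparability and on the ambient set $\V$ are harmless additions that the paper leaves implicit: the ground set is always $\V$, and complements of pairwise incomparable minimal covers are automatically incomparable.
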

	
	\begin{proof}
		As $G$  is a $1$-dimensional simplicial complex $\bigtriangleup(G)$ with facets as edges. So, by Theorem~\ref{thm:delta1}, the first NF-iterate is the graph $\delta_{\mathcal{NF}}(G)=H,$ where $H \cong K_{n,m}\setminus\{x_1y_1\},$ that is, $H$ is the bipartite graph on the same vertex set $\V=\A\cup\B$ with edge set
		 $E(H)=\{x_i y_j: 1\le i\le n, 1\le j\le m, (i,j)\neq(1,1)\}.$ 
		Lemma~\ref{lem:mincov-bip-minus} states that the set of minimal vertex covers of $H$ is
		\[
		\MIN(H)=\Bigl\{ \A, \B, (\A\setminus\{x_1\})\cup(\B\setminus\{y_1\}) \Bigr\}.
		\]
		 By Lemma~\ref{lem:facets-complements}, for any simplicial complex (hence in particular for the graph complex $\bigtriangleup(H)$),
		the facets of its NF-complex are exactly complements of minimal vertex covers
		\[
		\F(\delta_{\mathcal{NF}}(H))=\{ \V\setminus C  : C\in\MIN(H) \}.
		\]
		Since $\delta_{\mathcal{NF}}(H)=\delta_{\mathcal{NF}}^{(2)}(G)$, it remains to compute these three complements inside $\V=\A\cup\B$. (i) If $C=\A$, then $\V\setminus C=(\A\cup\B)\setminus\A=\B$. (ii) If $C=\B$, then $\V\setminus C=(\A\cup\B)\setminus\B=\A$. (iii)   If $C=(\A\setminus\{x_1\})\cup(\B\setminus\{y_1\})$, then using that $\A$ and $\B$ are disjoint,
			\[
			\V\setminus C
			= (\A\cup\B)\setminus\bigl((\A\setminus\{x_1\})\cup(\B\setminus\{y_1\})\bigr)
			= \{x_1\}\cup\{y_1\}
			= \{x_1,y_1\}.
			\] 
		Therefore, we obtain
		\[
		\F\!\left(\delta_{\mathcal{NF}}^{(2)}(G)\right)
		=\F(\delta_{\mathcal{NF}}(H))
		=\{\B, \A, \{x_1,y_1\}\}
		=\{\A, \B, \{x_1,y_1\}\}.
		\]
	\end{proof}
	
	The following is the immediate structural consequences of the above result.
	\begin{remark}
		If $\max\{n,m\}\ge 3$, then $\delta_{\mathcal{NF}}^{(2)}(B_{n,m})$ has a facet of size at least $3$, hence $\dim \delta_{\mathcal{NF}}^{(2)}(B_{n,m})\ge 2$.
		Thus, the orbit leaves the world of graphs at step $2$.
	\end{remark}

	Theorems ~\ref{thm:delta1} and  \ref{thm:delta2} provide the first two iterates in closed form.
	A natural follow-up question is the period of the complete orbit.  We have the following sharp conjectural formula for the NF-number of the dumbbell graph.
	\begin{conjecture}\label{conj:nf-dumbbell}
		Let $n,m\ge 2$ and let $G=B_{n,m}$.
		Then
		\[
		\mathcal{NF}(G)=
		\begin{cases}
			2, & (n,m)=(2,2),\\
			n+m+2, & \text{otherwise}.
		\end{cases}
		\]
	\end{conjecture}
	
	 The exceptional case $(n,m)=(2,2)$ is the path $P_4$.  According to Hibi and Mahmood \cite{HibiMahmood2020}, $P_4$ has an NF-number of $1$ up to isomorphism. However, when the labeled vertex set is fixed, the orbit can exhibit a 2-cycle. Both opinions exist in the literature, depending on whether intermediate complexes are identified and relabeled at each step or at the conclusion.		In this study, we use the return up to isomorphism convention for the NF-number. According to this convention, Conjecture~\ref{conj:nf-dumbbell} predicts the smallest $q$ with $\delta_{\mathcal{NF}}^{(q)}(G)\cong G$.
	
	\medskip
	Computational evidence indicates that Conjecture~\ref{conj:nf-dumbbell} is valid for all $2\le n,m\le 5$. Using Lemma~\ref{lem:facets-complements}, a direct implementation of $\delta_{\mathcal{NF}}$ counts minimal vertex coverings and iterates the operator until the first return.		Exhaustive checks (Table \ref{tab 1}) for all $(n,m)$ with $2\le n,m\le 5$ match the predicted values
		$\mathcal{NF}(B_{n,m})=n+m+2$ except for $(2,2)$.
	 \begin{table}[h]
		\centering
		\begin{tabular}{c||cccc}
			$\mathcal{NF}(B_{n,m})$ & $m=2$ & $m=3$ & $m=4$ & $m=5$\\ \hline\hline
			$n=2$ & $2$ & $7$ & $8$ & $9$\\
			$n=3$ & $7$ & $8$ & $9$ & $10$\\
			$n=4$ & $8$ & $9$ & $10$ & $11$\\
			$n=5$ & $9$ & $10$ & $11$ & $12$\\ \hline\hline
		\end{tabular}
		\caption{Computed NF-numbers for $2\le n,m\le 5$, consistent with $n+m+2$ except at $(2,2)$.}
		\label{tab 1}
	\end{table}
	
	Next, we have the partial results toward Conjecture \ref{conj:nf-dumbbell}.  Even without a complete closed-form description of each iterate, nontrivial constraints can be proven.
	The following result gives the lower bound for $\delta_{\mathcal{NF}}^{(k)}(G).$
	\begin{proposition}\label{prop:lower-bound}
		Let $G=B_{n,m}$ with $\max\{n,m\}\ge 3$ and let $N=n+m$.
		Then $\delta_{\mathcal{NF}}^{(k)}(G)\not\cong G$ for $1\le k\le 2$.
		Moreover, for any $k$ such that $\dim(\delta_{\mathcal{NF}}^{(k)}(G))\ge 2$, then  $\delta_{\mathcal{NF}}^{(k)}(G)\not\cong G$.
	\end{proposition}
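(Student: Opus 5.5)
The plan is to separate $\delta_{\mathcal{NF}}^{(k)}(G)$ from $G$ using isomorphism invariants that are cheap to read off: dimension, the multiset of facet sizes, and the number of facets. The guiding observation is that $G=B_{n,m}$, viewed as $\bigtriangleup(G)$, is a pure $1$-dimensional complex on $N=n+m$ vertices. It has exactly $\binom{n}{2}+\binom{m}{2}+1$ facets, all of size $2$. Any complex isomorphic to it must share all of these features, since an isomorphism is a bijection of vertex sets carrying facets to facets and preserving their cardinalities.

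First, I handle the general \emph{moreover} clause. An isomorphism $\pi$ preserves the face poset, so it preserves the maximal face size and hence the dimension. Since $\dim\bigtriangleup(G)=1$, any $k$ with $\dim\delta_{\mathcal{NF}}^{(k)}(G)\ge 2$ gives $\delta_{\mathcal{NF}}^{(k)}(G)\not\cong G$. This step is immediate.

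For $k=2$, I invoke Theorem~\ref{thm:delta2}. The facets of $\delta_{\mathcal{NF}}^{(2)}(G)$ are $\A$, $\B$ and $\{x_1,y_1\}$. Since $\max\{n,m\}\ge 3$, one of $\A$ or $\B$ has at least $3$ elements, so the dimension is at least $2$ (this is the preceding Remark). The previous step then applies. As a cross-check, there are only three facets, whereas $G$ has at least $\binom{3}{2}+\binom{2}{2}+1=5$.

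For $k=1$, Theorem~\ref{thm:delta1} says $\delta_{\mathcal{NF}}(G)\cong K_{n,m}\setminus\{x_1y_1\}$. This is again a graph, so dimension does not separate it from $G$, and I need a finer invariant.
\begin{itemize}
\item I first try counting edges: $nm-1$ versus $\binom n2+\binom m2+1$. Their difference is $(n-m)^2/2-(n+m)/2+2$ up to sign bookkeeping, and it can vanish (e.g.\ possibly for some $(n,m)$). So edge counts alone may not suffice, and I will not rely on them.
\item Instead I use a structural invariant preserved by graph isomorphism: $\delta_{\mathcal{NF}}(G)$ is bipartite, since all its edges join $\A$ to $\B$.
\item Meanwhile $G$ contains a triangle, because $\max\{n,m\}\ge 3$ means one of the cliques $K(\A)$ or $K(\B)$ is $K_r$ with $r\ge 3$.
\end{itemize}
A graph with a triangle is not bipartite, so $\delta_{\mathcal{NF}}(G)\not\cong G$.

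The main obstacle is exactly this $k=1$ case: the dimension argument fails there, and the naive edge count is unreliable. The triangle-versus-bipartite invariant resolves it cleanly. The hypothesis $\max\{n,m\}\ge 3$ is essential here: for $(n,m)=(2,2)$ both graphs are $P_4$, and they are genuinely isomorphic.
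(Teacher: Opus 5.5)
Your proof is correct. For $k=2$ and for the ``moreover'' clause it is the same as the paper's: dimension is an isomorphism invariant, and Theorem~\ref{thm:delta2} supplies a facet $\A$ or $\B$ of size at least $3$.

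The real difference is at $k=1$. The paper separates $\delta_{\mathcal{NF}}(G)$ from $G$ by comparing edge counts, $nm-1$ against $\binom{n}{2}+\binom{m}{2}+1$. You use bipartite versus containing a triangle instead. Your doubt about edge counts is justified and can be made precise. The two counts agree exactly when $(n-m)^2=n+m-4$. This has infinitely many solutions with $\max\{n,m\}\ge 3$: $(3,2),(5,3),(8,5),(12,8),\dots$ and their transposes. At $(3,2)$, for example, both graphs have $5$ edges and even the same degree sequence $(3,2,2,2,1)$.

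So the paper's edge-count step does not establish the $k=1$ claim in those cases. Your invariant works uniformly. A clique on $\max\{n,m\}\ge 3$ vertices gives a triangle in $G$, while every edge of $K_{n,m}\setminus\{x_1y_1\}$ joins $\A$ to $\B$.

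Your route costs nothing extra and closes a genuine gap in the published argument. You could sharpen your own write-up by replacing the hedge ``possibly for some $(n,m)$'' with the explicit coincidence at $(3,2)$.
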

	
	\begin{proof}
		By Theorem~\ref{thm:delta1}, $\delta_{\mathcal{NF}}(G)$ is a bipartite graph with $nm-1$ edges,
		whereas $G$ has $\binom{n}{2}+\binom{m}{2}+1$ edges, so $\delta_{\mathcal{NF}}(G)\not\cong G$.
		By Theorem~\ref{thm:delta2}, $\delta_{\mathcal{NF}}^{(2)}(G)$ has facet $\A$ and $\B$, and hence has dimension at least $2$, if $\max\{n,m\}\ge 3$.
		Any complex of dimension greater or equal to $2$ cannot be isomorphic to the graph $G$, proving the claim.
	\end{proof}
	
	\medskip
	The following  result gives a structural two-block permanence phenomenon $\delta_{\mathcal{NF}}(\bigtriangleup_2).$
	\begin{proposition}\label{prop:two-block}
		Let $G=B_{n,m}$ with $\max\{n,m\}\ge 3$ and set $\bigtriangleup_2=\delta_{\mathcal{NF}}^{(2)}(G)$.
		Then every facet of $\delta_{\mathcal{NF}}(\bigtriangleup_2)=\delta_{\mathcal{NF}}^{(3)}(G)$ is of the form $(\A\setminus\{x\}) \cup (\B\setminus\{y\})$ 
		with $x\in\A$ and $y\in\B$, and at least one of $x,y$ is a bridge endpoint ($x=x_1$ or $y=y_1$).
	\end{proposition}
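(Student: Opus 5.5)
By Theorem~\ref{thm:delta2}, $\bigtriangleup_2$ has exactly three facets, $\A$, $\B$ and $\{x_1,y_1\}$. By Lemma~\ref{lem:facets-complements}, the facets of $\delta_{\mathcal{NF}}(\bigtriangleup_2)$ are the sets $\V\setminus C$ with $C\in\MIN(\bigtriangleup_2)$. So the whole proof reduces to finding the minimal vertex covers (minimal transversals) of the three-edge hypergraph $\{\A,\B,\{x_1,y_1\}\}$ on $\V=\A\cup\B$. The plan is to show that these covers are exactly the two-element sets $\{x,y\}$ with $x\in\A$, $y\in\B$, and $x=x_1$ or $y=y_1$. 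Taking complements then gives the stated form
\[
\V\setminus\{x,y\}=(\A\setminus\{x\})\cup(\B\setminus\{y\}).
\]

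\textbf{Step 1 (lower bound on size).} Any cover $C$ must meet $\A$ and $\B$. Since $\A\cap\B=\varnothing$, $C$ contains some $x\in\A$ and some $y\in\B$, so $|C|\ge 2$.

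\textbf{Step 2 (which pairs cover).} A pair $\{x,y\}$ with $x\in\A$, $y\in\B$ automatically meets $\A$ and $\B$. It meets $\{x_1,y_1\}$ if and only if $x=x_1$ or $y=y_1$. Such a pair is minimal, because deleting either element leaves $\B$ or $\A$ unmet.

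\textbf{Step 3 (no other minimal covers).} Let $C$ be an arbitrary minimal cover, and split according to whether $x_1\in C$ or $y_1\in C$; one of these must hold since $C$ meets $\{x_1,y_1\}$.
\begin{itemize}
\item If $x_1\in C$, pick any $y\in C\cap\B$. Then $\{x_1,y\}\subseteq C$ is already a cover by Step~2, so minimality forces $C=\{x_1,y\}$.
\item If $y_1\in C$, the symmetric argument gives $C=\{x,y_1\}$ for some $x\in C\cap\A$.
\end{itemize}
Hence $\MIN(\bigtriangleup_2)=\{\{x_1,y\}:y\in\B\}\cup\{\{x,y_1\}:x\in\A\}$, and Lemma~\ref{lem:facets-complements} yields the claim.

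The main obstacle I expect is Step 3, the exhaustiveness of the list. The point to make carefully is that any cover of size at least $3$ contains one of the pairs from Step 2, and the case split on which bridge endpoint lies in $C$ handles this cleanly.

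The hypothesis $\max\{n,m\}\ge 3$ does not seem to be needed in this argument, beyond guaranteeing that Theorem~\ref{thm:delta2} applies. I would remark that the facets obtained are pairwise distinct and have size $n+m-2$, so $\delta_{\mathcal{NF}}^{(3)}(G)$ is pure with $n+m-1$ facets.
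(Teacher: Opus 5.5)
Your proof is correct and follows the same route as the paper. Both read off the facets $\A$, $\B$, $\{x_1,y_1\}$ from Theorem~\ref{thm:delta2}, identify the minimal transversals as the pairs $\{x_1,y\}$ and $\{x,y_1\}$, and then complement via Lemma~\ref{lem:facets-complements}. Your case split on whether $x_1\in C$ or $y_1\in C$ is a cleaner version of the paper's terse ``delete redundant vertices'' step, which must also keep $\{x_1,y_1\}$ hit; and you are right that $\max\{n,m\}\ge 3$ plays no role, since Theorem~\ref{thm:delta2} already holds for all $n,m\ge 2$.
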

	
	\begin{proof}
		By Theorem~\ref{thm:delta2}, $\bigtriangleup_2$ has facets $\A$, $\B$, and $\{x_1,y_1\}$.
		A vertex cover of $\bigtriangleup_2$ must intersect all three facets.
		Minimality forces such a cover to have size $2$, as one vertex from $\A$ and one from $\B$ (otherwise one can delete redundant vertices
		and still intersect $\A$ and $\B$). The additional requirement to meet $\{x_1,y_1\}$ says that at least one chosen vertex is $x_1$ or $y_1$.
		Thus, the minimal covers are precisely the pairs $\{x_1,y\}$ with $y\in\B$ and $\{x,y_1\}$ with $x\in\A$.
		By Lemma~\ref{lem:facets-complements}, the facets of $\delta_{\mathcal{NF}}(\bigtriangleup_2)$ are the complements of such pairs, that is,
		$(\A\setminus\{x_1\})\cup(\B\setminus\{y\})$ or $(\A\setminus\{x\})\cup(\B\setminus\{y_1\})$.
	\end{proof}
	
	\medskip

		Proposition~\ref{prop:two-block} suggests that the whole orbit of a dumbbell may remain within a structured family of two-block complexes, whose facets are unions of large subsets of $\A$ and $\B$ with controlled deletions.
		A promising path to a full proof of Conjecture~\ref{conj:nf-dumbbell} is to take following into account:
		\begin{enumerate}[leftmargin=2em]
			\item define an explicit parameterized family $\{\bigtriangleup(t)\}$ that contains $\delta_{\mathcal{NF}}^{(k)}(B_{n,m})$ for all $k$, and
			\item show that $\delta_{\mathcal{NF}}$ acts as a predictable update rule on those parameters, producing period $n+m+2$.
		\end{enumerate}
		This would parallel the explicit orbit construction used by Hibi--Mahmood \cite{HibiMahmood2020} for $K_n\cup K_m$.
	
	\medskip
	Next, we illustrate results with the help of a dumbbell $B_{3,4}$ example.
	 	\begin{example}\label{ex:b34}
		Let $\A=\{x_1,x_2,x_3\}$ and $\B=\{y_1,y_2,y_3,y_4\}$.
		The dumbbell $G=B_{3,4}$ has clique edges inside $\A$, clique edges inside $\B$, and bridge $x_1y_1$.
		 A minimal vertex cover of $K(\A)$ must omit exactly one $x_i$. A similarly reason is true for $K(\B)$.
		So, the candidates are $C_{i,j}=(\A\setminus\{x_i\})\cup(\B\setminus\{y_j\})$ for $1\le i\le 3, 1\le j\le 4.$ The bridge $x_1y_1$ is uncovered only when $i=j=1$. Hence $\MIN(G)=\{C_{i,j}:(i,j)\neq(1,1)\}$.
		 By Lemma~\ref{lem:facets-complements}, the facets of $\delta_{\mathcal{NF}}(G)$ are complements, 
		$\V\setminus C_{i,j}=\{x_i,y_j\}$ for $(i,j)\neq(1,1)$.
		Hence, $\delta_{\mathcal{NF}}(G)$ is the bipartite graph with all $x_i y_j$ edges except $x_1y_1$.
		For $\delta_{\mathcal{NF}}^{(2)}(G)$, the minimal vertex covers of $H=\delta_{\mathcal{NF}}(G)=K_{3,4}\setminus\{x_1y_1\}$ are:
		$\A$, $\B$, and $(\A\setminus\{x_1\})\cup(\B\setminus\{y_1\})$ (Lemma~\ref{lem:mincov-bip-minus}).
		Complement give the facets $\B$, $\A$, and $\{x_1,y_1\}$, so we obtain
		 $\F(\delta_{\mathcal{NF}}^{(2)}(G))=\{\A, \B, \{x_1,y_1\}\}.$ 
	\end{example}
	
	\medskip
	 Graph characteristics can be used to define simple complexes in a variety of ways.
	González, Hoekstra, and Mendoza \cite{GonzalezHoekstraMendoza} examine the complex $D_{n,k}$, whose vertices are the $\binom{n}{2}$ possible edges on $[n]$, and whose simplices correspond to graphs $\sigma$ with domination number at least $k$.	Using discrete Morse theory, they show that $D_{n,n-2}$ has the same homotopy as a wedge of $2$-spheres for $n\ge 4$.
	
	From the NF perspective, a distinct dynamical structure is obtained, as by iterating $\delta_{\mathcal{NF}}$, we can create a periodic sequence of complexes from a single graph $G$ (as a $1$-complex). Both constructions highlight how rich simplicial-complex topology and combinatorics can be produced from graph-theoretic material.
	
	\begin{problem}[NF-iteration meets graph-complex topology]
		Is there a useful relationship between the NF-orbit of a graph $G$ and the homotopy type of
		graph complexes defined by monotone properties (e.g. domination, degree bounds, connectivity)?
		Does the NF-orbit, for instance, maintain or regulate any inherent Morse matchings or shellability properties?
	\end{problem}
	
	\section{NF-complexes and NF-numbers of the complete split graph}\label{section 4}
	  The complete split graph with clique number $n\geq 2$ and independence number $m\geq 1$ is defined as $S_{n,m} := K_n \vee \overline{K}_m$. Its vertex set is  a disjoint union $V = A\cup B,$ where $A=\{x_1,\dots,x_n\},$ and $B=\{y_1,\dots,y_m\}$. We consider $S_{n,m}$ to be a one-dimensional simplicial complex on the ground set $V$.	
	We begin with a detailed discussion of minimal vertex covers in the split graph $S_{n,m}$.	
	\begin{lemma}\label{lem:mincov-split}
		Let $G=S_{n,m}$ on $V=A\cup B$ as above.
		Then the minimal vertex covers of $G$ are precisely $A,$ and  $(A\setminus\{x_i\})\cup B $ for $ i=1,\dots,n.$
	\end{lemma}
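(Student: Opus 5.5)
The plan is to follow the template of Lemma~\ref{lem:mincov-dumbbell}: first show that every minimal vertex cover has one of the stated forms, then check that each stated set is indeed a minimal vertex cover. The edge set of $S_{n,m}$ consists of the clique edges $x_ix_k$ ($i\neq k$) and all cross edges $x_iy_j$. There are no edges inside $B$.

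\textbf{Necessity.} Let $C$ be a minimal vertex cover. As in Claim~1 of Lemma~\ref{lem:mincov-dumbbell}, the clique on $A$ forces $|C\cap A|\ge n-1$, since two omitted vertices of $A$ would span an uncovered edge. We split into two cases.
\begin{enumerate}[leftmargin=2em]
\item If $A\subseteq C$, then $A$ is itself a vertex cover, because every edge meets $A$. Minimality then forces $C=A$.
\item Otherwise $C\cap A=A\setminus\{x_i\}$ for a unique $i$. Each cross edge $x_iy_j$ must then be covered by $y_j$, so $B\subseteq C$. Hence $(A\setminus\{x_i\})\cup B\subseteq C$, and since $C\subseteq V=A\cup B$ with $x_i\notin C$, we get equality.
\end{enumerate}

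\textbf{Sufficiency.} The set $A$ is a cover. It is minimal because removing any $x_i$ leaves uncovered the edge $x_iy_1$, using $m\ge1$; it also leaves uncovered $x_ix_k$, using $n\ge2$. The set $C_i=(A\setminus\{x_i\})\cup B$ is a cover, since every clique edge has an endpoint other than $x_i$ and every cross edge contains some $y_j\in C_i$. It is minimal for two reasons. Removing $x_k$ with $k\ne i$ uncovers $x_kx_i$. Removing $y_j$ uncovers $x_iy_j$.

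No step is a real obstacle. The only care needed is the hypotheses: $m\ge1$ and $n\ge2$ guarantee the witnessing edges used for minimality. We should also note that these $n+1$ sets are distinct, since $A\neq C_i$ because $B\neq\varnothing$.
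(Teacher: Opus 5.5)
Your proof is correct and follows essentially the same route as the paper. Both arguments use the clique to get $|C\cap A|\ge n-1$, split on whether $A\subseteq C$, and force $B\subseteq C$ once some $x_i$ is omitted; then each listed set is shown minimal by exhibiting an uncovered edge. One small difference is that you get $C=(A\setminus\{x_i\})\cup B$ directly from $C\subseteq V\setminus\{x_i\}$ rather than invoking minimality as the paper does, which is slightly cleaner.
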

	
	\begin{proof}
		Let $C\subseteq V$ be a vertex cover of $G$. We consider the following cases.\\		
		(1) Covering the clique on $A$:	Since $G[A]\cong K_n$, the set $C\cap A$ must contain at least $n-1$ vertices of $A$.	If $C$ omits two distinct vertices $x_i,x_j\in A$, then the clique edge $x_ix_j$	would have both endpoints outside $C$, contradicting that $C$ covers all edges. Thus,  $|C\cap A|\ge n-1$.\\
		(2) If $C\cap A=A$, then $C=A$ is minimal: If $A\subseteq C$, then every edge of $G$ is incident to some vertex of $A$,		so $A$ itself is a vertex cover. 		Moreover $A$ is  minimal, as for any $x_i\in A$, if we remove $x_i$ from $A$, then the edges $x_i y_j$
		(for any $j$) are no longer covered because $y_j\notin A$ and $x_i$ has been removed.
		Hence, no proper subset of $A$ is a vertex cover, so $A$ is a minimal vertex cover.\\
		(3) If $C\cap A=A\setminus\{x_i\}$, then $B\subseteq C$:		Now, assume that $|C\cap A|=n-1$, so $C\cap A=A\setminus\{x_i\}$ for a unique $i$.
		If any $y_j\in B$, then the edge $x_i y_j$ exists (all $A$--$B$ edges exist).
		Since $x_i\notin C$, to cover $x_i y_j$ we must have $y_j\in C$.
		As $j$ was arbitrary, so $B\subseteq C$ follows.\\
		 (4) Minimality forces no extra vertices:		By (3), any vertex cover with $|C\cap A|=n-1$ must contain
		$(A\setminus\{x_i\})\cup B$.
		This set is itself a vertex cover as clique edges in $A$ are covered since
		all except one vertex of $A$ are present, and every edge incident to $B$ is covered		because all vertices of $B$ are included.
		It is minimal by inclusion, as	removing any $y_j$ exposes the uncovered edge $x_i y_j$ (since $x_i\notin C$),
		and removing any $x_k\in A\setminus\{x_i\}$ exposes the clique edge $x_kx_i$.	Therefore, the only minimal covers of this type are exactly
		$(A\setminus\{x_i\})\cup B$. Now, combining (2) and (4) gives the given list.
	\end{proof}
	
	\medskip
	Applying Lemma~\ref{lem:mincov-split}, we have the following result, which presents the facets of $\delta_{NF}(\bigtriangleup)$, which 
	are the complements of minimal vertex covers of simplicial complex $\bigtriangleup $ on ground set $V$.
	\begin{proposition}\label{prop:NF1-split}
		Let $\bigtriangleup(G)$ be the graph complex of $G=S_{n,m}$.
		Then $\delta_{NF}(\bigtriangleup(G))$ has facets
		\[
		\mathcal{F}\bigl(\delta_{NF}(\bigtriangleup(G))\bigr)
		=\{\,B\,\} \cup \bigl\{\{x_1\},\dots,\{x_n\}\bigr\}.
		\]
	\end{proposition}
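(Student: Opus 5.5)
The plan is to combine Lemma~\ref{lem:facets-complements} with Lemma~\ref{lem:mincov-split} and then compute complements. Since $\bigtriangleup(G)$ is the one-dimensional complex whose facets are the edges of $G$, its vertex covers are exactly the graph vertex covers of $G$. Hence $\MIN(\bigtriangleup(G))=\MIN(G)$. By Lemma~\ref{lem:mincov-split}, this set is $\{A\}\cup\{(A\setminus\{x_i\})\cup B : 1\le i\le n\}$.

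Lemma~\ref{lem:facets-complements} then gives $\F(\delta_{NF}(\bigtriangleup(G)))=\{V\setminus C : C\in\MIN(G)\}$, and we compute the complements in $V=A\cup B$ using $A\cap B=\varnothing$. For $C=A$ we get $V\setminus A=B$. For $C=(A\setminus\{x_i\})\cup B$ we get
\[
V\setminus\bigl((A\setminus\{x_i\})\cup B\bigr)=\{x_i\}.
\]
Collecting these yields exactly $\{B\}\cup\{\{x_1\},\dots,\{x_n\}\}$.

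The only point needing care is that Lemma~\ref{lem:facets-complements} already asserts that these complements are precisely the facets. Still, it is reassuring to check directly that the list is an antichain. Each $\{x_i\}$ is disjoint from $B$, so no singleton lies inside $B$ and $B\not\subseteq\{x_i\}$ because $B\neq\varnothing$ (as $m\ge1$). Distinct singletons are incomparable. This check is the only place the hypothesis $m\ge 1$ enters, and it guarantees that none of the listed sets is redundant. I do not expect any real obstacle, since the substantive work was done in Lemma~\ref{lem:mincov-split}.
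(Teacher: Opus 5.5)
Your proof is correct and is essentially the paper's argument: cite Lemma~\ref{lem:mincov-split}, take complements in $V=A\cup B$, and note that the resulting sets are pairwise incomparable. As you observe, that incomparability is already automatic from Lemma~\ref{lem:facets-complements}. One small quibble: the antichain check is not the only place $m\ge 1$ enters, since Lemma~\ref{lem:mincov-split} itself needs some $y_j$ to exist for $A$ to be a \emph{minimal} cover.
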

	
	\begin{proof}
		By Lemma~\ref{lem:mincov-split}, the minimal vertex covers are
		$A$ and $(A\setminus\{x_i\})\cup B$.
		Taking complements in $V=A\cup B$, we get  $V\setminus A = B,$ and $V\setminus\bigl((A\setminus\{x_i\})\cup B\bigr)=\{x_i\}.$ These complements are inclusion-maximal among themselves, so they are precisely
		the facets of $\delta_{NF}(\bigtriangleup(G))$.
	\end{proof}
	
	\medskip 
	A second iterate is described in the following lemma.	
	\begin{proposition}\label{prop:NF2-split}
		For $G=S_{n,m}$, the complex $\delta_{NF}^{(2)}(\bigtriangleup(G))$ has facets
		\[
		\mathcal{F}\bigl(\delta_{NF}^{(2)}(\bigtriangleup(G))\bigr)
		=\bigl\{\,B\setminus\{y_j\} : j=1,\dots,m\,\bigr\}.
		\]
	\end{proposition}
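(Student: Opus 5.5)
The plan is to apply Lemma~\ref{lem:facets-complements} a second time, now to the complex $\bigtriangleup_1:=\delta_{NF}(\bigtriangleup(G))$, whose facets are given by Proposition~\ref{prop:NF1-split}. Since $\delta_{NF}^{(2)}(\bigtriangleup(G))=\delta_{NF}(\bigtriangleup_1)$, it suffices to determine $\MIN(\bigtriangleup_1)$ and then take complements in $V=A\cup B$. By Proposition~\ref{prop:NF1-split}, the facets of $\bigtriangleup_1$ are the singletons $\{x_1\},\dots,\{x_n\}$ together with the set $B$.

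The first step is to show that a subset $C\subseteq V$ is a vertex cover of $\bigtriangleup_1$ if and only if $A\subseteq C$ and $C\cap B\neq\varnothing$.
\begin{itemize}
\item Meeting each singleton facet $\{x_i\}$ forces $x_i\in C$, so $A\subseteq C$.
\item Meeting the facet $B$ forces $C$ to contain at least one $y_j$.
\item Conversely, any set with both properties meets every facet.
\end{itemize}

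The second step identifies the minimal covers as exactly the sets $A\cup\{y_j\}$ for $j=1,\dots,m$.
\begin{itemize}
\item Each $A\cup\{y_j\}$ is a cover by the first step.
\item It is minimal: deleting some $x_i$ leaves the facet $\{x_i\}$ uncovered, and deleting $y_j$ leaves $B$ uncovered.
\item Every cover $C$ contains $A\cup\{y_j\}$ for any $y_j\in C\cap B$. Hence a minimal cover must equal such a set.
\item These sets are pairwise distinct and pairwise incomparable.
\end{itemize}
So $\MIN(\bigtriangleup_1)=\{A\cup\{y_j\}:1\le j\le m\}$.

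Finally, taking complements gives $V\setminus(A\cup\{y_j\})=B\setminus\{y_j\}$. Lemma~\ref{lem:facets-complements} then yields exactly the stated facet list.

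There is no substantial obstacle; the only point requiring care is the degenerate case $m=1$. There, $B\setminus\{y_1\}=\varnothing$, and the complex is the void-dimensional complex $\{\varnothing\}$ with the single facet $\varnothing$. I would remark that the statement still holds verbatim under the convention that $\varnothing$ is the unique facet. For $m\ge 2$, the sets $B\setminus\{y_j\}$ are all nonempty, distinct and incomparable, so they are genuinely the facets.
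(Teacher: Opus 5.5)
Your proof is correct and follows the paper's argument: vertex covers of $\bigtriangleup_1$ are exactly the sets containing $A$ and meeting $B$, so $\MIN(\bigtriangleup_1)=\{A\cup\{y_j\}:1\le j\le m\}$, and complementing gives the sets $B\setminus\{y_j\}$. Your explicit minimality check and your remark that for $m=1$ the iterate is $\{\varnothing\}$ (consistent with $B\setminus\{y_1\}=\varnothing$) are valid refinements that the paper leaves implicit.
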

	
	\begin{proof}
		Let $\bigtriangleup_1=\delta_{NF}(\bigtriangleup(G))$.
		By Proposition~\ref{prop:NF1-split}, the facets of $\bigtriangleup_1$ are $B$ and the singletons
		$\{x_i\}$.
		Thus, a subset $C\subseteq V$ is a vertex cover of $\bigtriangleup_1$ if and only if it intersects $B$
		and also intersects every singleton facet $\{x_i\}$, that is, if and only if
		 $\{x_1,\dots,x_n\}\subseteq C$ and $C\cap B\neq\emptyset.$ Hence, the minimal vertex covers of $\bigtriangleup_1$ are exactly $A\cup\{y_j\}$,  for $j=1,\dots,m.$
		With complements in $V$, we obtain $V\setminus (A\cup\{y_j\}) = B\setminus\{y_j\},$ and these are maximal under inclusion among the obtained complements. Therefore, they are the facets of $\delta_{NF}(\bigtriangleup_1)=\delta_{NF}^{(2)}(\bigtriangleup(G))$.
	\end{proof}
	
	\medskip
	  	Let $\bigtriangleup(G)$ denote the edge complex of $G=S_{n,m}$. Write $\mathfrak{S}_A$ for the full symmetric group on the set $A$ (all permutations of the $x_i$'s),	and $\mathfrak{S}_B$ for the full symmetric group on $B$ (all permutations of the $y_j$'s). The direct product $\mathfrak{S}_A\times \mathfrak{S}_B$ 
	acts on $V$ by permuting vertices inside $A$ and inside $B$ independently.
	This action preserves adjacency in $S_{n,m}$ (it preserves clique edges in $A$, preserves
	the fact that there are no edges inside $B$, and preserves all cross edges), hence it is a subgroup
	of the automorphism group $\mathrm{Aut}(G)$. Consequently, it acts on $\bigtriangleup(G)$ and on every $\delta_{NF}^{(k)}(\bigtriangleup(G))$
	by relabeling vertices.
	
	The following lemma gives the equivariance of $\delta_{NF}$ under relabeling.
	\begin{lemma}\label{lem:equivariance}
		Let $\pi$ be a permutation of $V$ and let $\bigtriangleup$ be a simplicial complex on $V$.
		Then $\delta_{NF}(\pi(\bigtriangleup))=\pi(\delta_{NF}(\bigtriangleup))$. In particular, if $\bigtriangleup$ is invariant
		under a group of permutations $\mathcal{G}$, then so is $\delta_{NF}(\bigtriangleup)$ and every iterate
		$\delta_{NF}^{(k)}(\bigtriangleup)$.
	\end{lemma}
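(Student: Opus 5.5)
The plan is to prove the equivariance directly at the level of facets, using the description of the NF-operator from Lemma~\ref{lem:facets-complements}. Fix a permutation $\pi$ of $V$ and extend it to subsets by $\pi(F)=\{\pi(v):v\in F\}$ and to complexes by $\pi(\bigtriangleup)=\{\pi(F):F\in\bigtriangleup\}$. Since $\pi$ is a bijection preserving inclusion, $\pi(\bigtriangleup)$ is again a simplicial complex on $V$. Moreover, its facets are exactly the images of the facets of $\bigtriangleup$: $\F(\pi(\bigtriangleup))=\{\pi(F):F\in\F(\bigtriangleup)\}$.

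The key step is to show that $\pi$ carries minimal vertex covers to minimal vertex covers, that is,
\[
\MIN(\pi(\bigtriangleup))=\{\pi(C):C\in\MIN(\bigtriangleup)\}.
\]
For covers, I would note that $\pi(C)\cap\pi(F)=\pi(C\cap F)$ because $\pi$ is injective. Hence $C$ meets every facet of $\bigtriangleup$ if and only if $\pi(C)$ meets every facet of $\pi(\bigtriangleup)$. Minimality transfers because $\pi$ induces an inclusion-preserving bijection on subsets of $V$. Applying the same argument to $\pi^{-1}$ gives the reverse inclusion.

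Next I would invoke Lemma~\ref{lem:facets-complements} together with $V\setminus\pi(C)=\pi(V\setminus C)$, which holds because $\pi$ is a bijection of $V$. This gives
\[
\F(\delta_{\mathcal{NF}}(\pi(\bigtriangleup)))=\{\pi(V\setminus C):C\in\MIN(\bigtriangleup)\}=\F(\pi(\delta_{\mathcal{NF}}(\bigtriangleup))).
\]
A simplicial complex is determined by its facets, so $\delta_{\mathcal{NF}}(\pi(\bigtriangleup))=\pi(\delta_{\mathcal{NF}}(\bigtriangleup))$. Alternatively, one can argue purely algebraically: the ring automorphism $x_v\mapsto x_{\pi(v)}$ sends $\IF(\bigtriangleup)$ to $\IF(\pi(\bigtriangleup))$, and a monomial $\prod_{v\in F}x_v$ lies outside an ideal exactly when its image lies outside the image ideal.

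For the second assertion, suppose $\pi(\bigtriangleup)=\bigtriangleup$ for every $\pi\in\mathcal{G}$. Then $\pi(\delta_{\mathcal{NF}}(\bigtriangleup))=\delta_{\mathcal{NF}}(\pi(\bigtriangleup))=\delta_{\mathcal{NF}}(\bigtriangleup)$, so $\delta_{\mathcal{NF}}(\bigtriangleup)$ is $\mathcal{G}$-invariant. Induction on $k$ then extends invariance to every $\delta_{\mathcal{NF}}^{(k)}(\bigtriangleup)$: if $\delta_{\mathcal{NF}}^{(k-1)}(\bigtriangleup)$ is invariant, applying the first part to it gives invariance of $\delta_{\mathcal{NF}}^{(k)}(\bigtriangleup)$.

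There is no real obstacle here. The only point needing care is the bookkeeping that complements and minimality commute with $\pi$, and both follow from $\pi$ being a bijection of the fixed ground set $V$.
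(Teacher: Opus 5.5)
Your proof is correct, but its main line differs from the paper's.

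The paper argues purely algebraically from the definition $\delta_{\mathcal{NF}}(\bigtriangleup)=\bigtriangleup_{\mathcal{N}}(\IF(\bigtriangleup))$. Relabeling sends each facet generator of $\IF(\bigtriangleup)$ to the corresponding generator of $\IF(\pi(\bigtriangleup))$, so $\IF(\pi(\bigtriangleup))=\pi(\IF(\bigtriangleup))$. The paper then simply asserts that forming the Stanley--Reisner complex commutes with relabeling. That is exactly your ``alternatively'' sentence. Your observation that a monomial lies outside an ideal if and only if its image lies outside the image ideal supplies the one-line justification the paper leaves implicit.

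Your primary route instead passes through Lemma~\ref{lem:facets-complements}, transporting covers, minimality and complements along $\pi$. This makes your argument depend on that lemma, and hence on the decomposition of $\IF(\bigtriangleup)$ into the primes of minimal covers; the algebraic route needs only the definition. In exchange, your route is fully elementary and stays in the vertex-cover language the paper uses for all of its actual computations.

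You also prove the ``in particular'' clause by induction on $k$. The paper's proof stops at the displayed equality and never addresses invariance of the iterates.
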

	
	\begin{proof}
		By definition, $\delta_{NF}(\bigtriangleup)=\bigtriangleup_{\mathcal{N}}(\IF(\bigtriangleup))$, where $\IF(\bigtriangleup)$ is the facet ideal
		and $\bigtriangleup_{\mathcal{N}}(\cdot)$ is the Stanley--Reisner complex. Relabeling vertices by $\pi$ sends each facet
		$F$ to $\pi(F)$ and hence sends each generator $\prod_{v\in F} v$ of $\IF(\bigtriangleup)$ to the generator
		$\prod_{v\in \pi(F)} v$ of $\IF(\pi(\bigtriangleup))$. Thus, $\IF(\pi(\bigtriangleup))=\pi(\IF(\bigtriangleup))$, and taking
		Stanley--Reisner complexes commutes with relabeling, so we have
		\[
		\delta_{NF}(\pi(\bigtriangleup))=\bigtriangleup_N(\IF(\pi(\bigtriangleup)))=\bigtriangleup_N(\pi(\IF(\bigtriangleup)))=\pi(\bigtriangleup_N(\IF(\bigtriangleup)))
		=\pi(\delta_{NF}(\bigtriangleup)).\qedhere
		\]
	\end{proof}
	
	\medskip
	For any subset $F\subseteq V$, define its \emph{type} by
	\[
	\mathrm{type}(F):=(|F\cap A|, |F\cap B|)\in\{0,1,\dots,n\}\times\{0,1,\dots,m\}.
	\]
	For integers $0\le i\le n$ and $0\le j\le m$, define the \emph{type-class}
	\[
	\mathcal{M}(i,j):=\{\,F\subseteq V : \mathrm{type}(F)=(i,j)\,\}.
	\]
	Note that $\mathfrak{S}_A\times\mathfrak{S}_B$ acts transitively on $\mathcal{M}(i,j)$. Any combination of $i$ vertices in $A$ and $j$ vertices in $B$ can be carried to any other by permuting them inside each part.
	The following lemma shows that invariant complexes have facet sets that are unions of type-classes.
	\begin{lemma}\label{lem:type-union}
		Let $\bigtriangleup$ be a simplicial complex on $V=A\cup B$ that is invariant under
		$\mathfrak{S}_A\times \mathfrak{S}_B$. If $F$ is a facet of $\bigtriangleup$ of type $(i,j)$,
		then \emph{every} set in $\mathcal{M}(i,j)$ is a facet of $\bigtriangleup$. Hence there exists
		a set $\alpha\subseteq[0,n]\times[0,m]$ such that $\mathcal{F}(\bigtriangleup)=\bigcup_{(i,j)\in\alpha}\mathcal{M}(i,j).$ 
	\end{lemma}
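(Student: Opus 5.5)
The plan is to use the invariance hypothesis together with the transitivity of $\mathfrak{S}_A\times\mathfrak{S}_B$ on each type-class $\mathcal{M}(i,j)$, already noted before the statement. First I would check that an automorphism of a simplicial complex maps facets to facets. Let $\pi\in\mathfrak{S}_A\times\mathfrak{S}_B$, so that $\pi(\bigtriangleup)=\bigtriangleup$, and let $F$ be a facet. Then $\pi(F)\in\bigtriangleup$. Suppose $\pi(F)\subsetneq G$ for some face $G\in\bigtriangleup$. Applying $\pi^{-1}$, which also preserves $\bigtriangleup$, gives $F\subsetneq\pi^{-1}(G)\in\bigtriangleup$. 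This contradicts the maximality of $F$, so $\pi(F)$ is a facet.

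Second, I would observe that $\pi$ preserves type, since it maps $A$ to $A$ and $B$ to $B$ bijectively: $|\pi(F)\cap A|=|\pi(F\cap A)|=|F\cap A|$, and likewise for $B$. Now take a facet $F$ of type $(i,j)$ and any $F'\in\mathcal{M}(i,j)$. To build a $\pi$ with $\pi(F)=F'$, I would choose a bijection $F\cap A\to F'\cap A$ and extend it to a permutation of $A$, which is possible because the complements $A\setminus F$ and $A\setminus F'$ have equal size $n-i$. I would do the same on $B$. By the first step, $F'=\pi(F)$ is a facet. This proves the first assertion.

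Finally, I would set $\alpha=\{\mathrm{type}(F):F\in\mathcal{F}(\bigtriangleup)\}$. Every facet lies in the class $\mathcal{M}(\mathrm{type}(F))$ with that type in $\alpha$, which gives one inclusion. Conversely, each class $\mathcal{M}(i,j)$ with $(i,j)\in\alpha$ consists entirely of facets, by the first assertion. Together these give the equality $\mathcal{F}(\bigtriangleup)=\bigcup_{(i,j)\in\alpha}\mathcal{M}(i,j)$.

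There is no real obstacle. The only points needing care are that invariance must be applied to $\pi^{-1}$ as well, in order to transport maximality, and that the permutation must be extended explicitly using the equal cardinalities of the complements.
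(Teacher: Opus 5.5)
Your proposal is correct and follows essentially the same argument as the paper: use transitivity of $\mathfrak{S}_A\times\mathfrak{S}_B$ on $\mathcal{M}(i,j)$, note that invariance sends facets to facets, and take $\alpha$ to be the set of facet types. Your version adds two justifications the paper leaves implicit: that maximality is transported using $\pi^{-1}$, and that the required permutation can be built explicitly because the complements have equal sizes.
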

	
	\begin{proof}
		Let $F$ be a facet of type $(i,j)$. For any $F'\in \mathcal{M}(i,j)$, transitivity gives a
		$\pi\in\mathfrak{S}_A\times\mathfrak{S}_B$ with $\pi(F)=F'$. Since $\bigtriangleup$ is invariant,
		$\pi$ maps facets to facets, so $F'$ is a facet. Therefore, all of $\mathcal{M}(i,j)$ lies in
		$\mathcal{F}(\bigtriangleup)$. Taking $\alpha$ to be the set of types appearing among facets yields the
		displayed union.
	\end{proof}
	
	\medskip
	Put the product (componentwise) partial order on $[0,n]\times[0,m]$
	 $(i,j)\le (i',j')$ if and only if $i\le i'$ and $j\le j'.$ 
	A subset $\alpha\subseteq[0,n]\times[0,m]$ is an \emph{antichain}, if no two distinct elements of $\alpha$
	are comparable under this order. The following lemma shows that facet-type sets are antichains.
	
	\begin{lemma}\label{lem:alpha-antichain}
		Let $\bigtriangleup$ be invariant under $\mathfrak{S}_A\times\mathfrak{S}_B$ and write
		$\mathcal{F}(\bigtriangleup)=\bigcup_{(i,j)\in\alpha}\mathcal{M}(i,j)$ as in Lemma~\ref{lem:type-union}.
		Then $\alpha$ is an antichain.
	\end{lemma}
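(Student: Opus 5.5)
We argue by contradiction. Suppose $\alpha$ contains two distinct comparable types, say $(i,j)\le(i',j')$ with $(i,j)\neq(i',j')$. By Lemma~\ref{lem:type-union}, every set in $\mathcal{M}(i,j)$ and every set in $\mathcal{M}(i',j')$ is a facet of $\bigtriangleup$. The aim is to exhibit one facet of type $(i,j)$ properly contained in one facet of type $(i',j')$. This contradicts the fact that facets are maximal faces under inclusion: a facet cannot be a proper subset of another face, in particular not of another facet.

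The construction is direct. Since $i\le i'\le n=|A|$, we can fix a subset $A'\subseteq A$ with $|A'|=i'$ and a subset $A''\subseteq A'$ with $|A''|=i$. Likewise, since $j\le j'\le m$, we fix $B''\subseteq B'\subseteq B$ with $|B'|=j'$ and $|B''|=j$. Set $F=A''\cup B''$ and $F'=A'\cup B'$.

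Because $A$ and $B$ are disjoint, $F\cap A=A''$ and $F\cap B=B''$, so $\mathrm{type}(F)=(i,j)$. In the same way $\mathrm{type}(F')=(i',j')$. Hence $F\in\mathcal{M}(i,j)$ and $F'\in\mathcal{M}(i',j')$, and both are facets. By construction $F\subseteq F'$. The inclusion is proper because the types differ, so $|F|=i+j<i'+j'=|F'|$. This is the required contradiction, so no two distinct elements of $\alpha$ are comparable, i.e. $\alpha$ is an antichain.

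There is no serious obstacle here. The only point needing care is the precise use of Lemma~\ref{lem:type-union}: it must give that \emph{all} sets of a type in $\alpha$ are facets, and not merely that some facet has that type. Otherwise we could not choose the nested pair $F\subsetneq F'$ freely. The disjointness of $A$ and $B$ is what makes the type of the constructed sets exactly the prescribed pair.
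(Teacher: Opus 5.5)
Your proof is correct and takes essentially the same route as the paper: use Lemma~\ref{lem:type-union} to get that every set of either comparable type is a facet, then exhibit a facet of type $(i,j)$ properly contained in a set of type $(i',j')$, contradicting maximality. The only cosmetic difference is that the paper starts from an arbitrary facet $F$ of type $(i,j)$ and enlarges it to one of type $(i',j')$, whereas you build the nested pair $F\subsetneq F'$ directly.
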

	
	\begin{proof}
		Assume for contradiction that $(i,j),(i',j')\in\alpha$ with $(i,j)<(i',j')$ componentwise, so
		$i\le i'$, $j\le j'$, and at least one inequality is strict. Choose any facet $F\in\mathcal{M}(i,j)$ and any set $F'\in\mathcal{M}(i',j')$. Then as $i\le i'$ and $j\le j'$, we can extend $F$ to some set $\widetilde{F}$ of type $(i',j')$
		by adding $(i'-i)$ vertices from $A\setminus F$ and $(j'-j)$ vertices from $B\setminus F$.
		Thus, $F\subsetneq \widetilde{F}$ and $\widetilde{F}\in\mathcal{M}(i',j')$. By Lemma~\ref{lem:type-union}, every set in $\mathcal{M}(i',j')$ is a facet, so in particular
		$\widetilde{F}$ is a facet. But then $F$ cannot be a facet (it is properly contained in the face
		$\widetilde{F}$), contradicting that $F$ was a facet. Hence, no such comparable pair exists and
		$\alpha$ is an antichain.
	\end{proof}
	
	\medskip
	\noindent\textbf{Covers in terms of types and rowmotion.}
	Let $P:=[0,n]\times[0,m]$ with the product order.
	Define the involution $\varsigma:P\to P$ by $\varsigma(a,b):=(n-a, m-b).$
	For an antichain $\psi\subseteq P$, let $I(\psi)\subseteq P$ be the order ideal it generates:
	\[
	I(\psi):=\{\,u\in P:\exists~ v\in\psi \text{ with } u\le v\,\}.
	\]
	Define \emph{rowmotion} on antichains by $\rho(\psi):=\min\bigl(P\setminus I(\psi)\bigr),$ 
	the set of minimal elements of the complement of the ideal. Next, we have as result showing that the vertex-cover types are complements of an ideal.	
	\begin{lemma}\label{lem:covers-as-ideal}
		Let $\bigtriangleup$ be invariant under $\mathfrak{S}_A\times\mathfrak{S}_B$ and suppose
		$\mathcal{F}(\bigtriangleup)=\bigcup_{(i,j)\in\alpha}\mathcal{M}(i,j)$ with $\alpha$ an antichain.
		Let $C\subseteq V$ and set $\mathrm{type}(C)=(a,b)$. Then $C$ is a vertex cover of $\bigtriangleup$ if and only if $(a,b)\notin I(\varsigma(\alpha)).$ 
		Moreover, the set of \emph{minimal} vertex cover types is exactly $\beta(\bigtriangleup)=\rho(\varsigma(\alpha)).$ 
	\end{lemma}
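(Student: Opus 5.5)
The plan is to translate the cover condition facet by facet into a condition on types, and then read off minimality via the order structure. First I fix $C$ with $\mathrm{type}(C)=(a,b)$. The set $C$ fails to be a vertex cover exactly when some facet $F$ satisfies $F\cap C=\varnothing$, that is, $F\subseteq V\setminus C$. The complement $V\setminus C$ has type $(n-a,m-b)=\varsigma(a,b)$. By Lemma~\ref{lem:type-union}, every set of type $(i,j)\in\alpha$ is a facet. So the question becomes whether some set of type $(i,j)\in\alpha$ fits inside a fixed set of type $(n-a,m-b)$.

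This happens if and only if $i\le n-a$ and $j\le m-b$ for some $(i,j)\in\alpha$. In one direction, choose $i$ vertices of $A\setminus C$ and $j$ vertices of $B\setminus C$; in the other, count the $A$- and $B$-parts. The condition $i\le n-a,\ j\le m-b$ rewrites as $(a,b)\le(n-i,m-j)=\varsigma(i,j)$, because $\varsigma$ is an order-reversing involution. Hence $C$ is not a cover iff $(a,b)\in I(\varsigma(\alpha))$, which is the first claim. I should also check that $\varsigma(\alpha)$ is again an antichain, using Lemma~\ref{lem:alpha-antichain} and the fact that $\varsigma$ reverses order, so that $I(\varsigma(\alpha))$ is the ideal generated by an antichain as in the definition.

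For the minimal covers, the cover property depends only on type, and the set $U:=P\setminus I(\varsigma(\alpha))$ of cover types is an up-set. I claim that $C$ is a minimal cover iff $\mathrm{type}(C)\in\min U$.

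Suppose first that $(a,b)\in\min U$. Removing any vertex from $C$ yields type $(a-1,b)$ or $(a,b-1)$. Both of these lie below $(a,b)$, hence outside $U$, so the smaller set is not a cover and $C$ is minimal.

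Conversely, suppose $(a,b)\in U$ but $(a,b)\notin\min U$. Then some $(a',b')<(a,b)$ lies in $U$. Since $U$ is an up-set, I can walk up from $(a',b')$ to $(a,b)$ one coordinate at a time, and every point on the way stays in $U$. In particular $(a-1,b)$ or $(a,b-1)$ lies in $U$, with the relevant coordinate positive. Removing a vertex of $C$ from the corresponding part then leaves a cover, so $C$ is not minimal.

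Therefore the set of minimal cover types is $\min\bigl(P\setminus I(\varsigma(\alpha))\bigr)=\rho(\varsigma(\alpha))$. By transitivity of $\mathfrak{S}_A\times\mathfrak{S}_B$ on each $\mathcal{M}(a,b)$, every set of such a type actually occurs as a minimal cover.

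The main obstacle is this minimality step: it requires the one-step descent argument that an up-set element which is not minimal has an immediate lower neighbour inside the up-set. That holds because $P$ is a product of chains, so every covering relation changes exactly one coordinate by $1$. The remaining steps are routine bookkeeping with types.
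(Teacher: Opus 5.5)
Your proof is correct and follows the paper's argument. You characterize non-covers by the existence of a facet type $(i,j)\in\alpha$ fitting inside $V\setminus C$, i.e.\ $(a,b)\le\varsigma(i,j)$, and then identify minimal cover types with $\min\bigl(P\setminus I(\varsigma(\alpha))\bigr)=\rho(\varsigma(\alpha))$. Your minimality step is more careful than the paper's one-line remark, since you argue both directions, but the one-step descent through the up-set is not actually needed: if $(a',b')<(a,b)$ is a cover type, you can directly choose $C'\subsetneq C$ of type $(a',b')$.
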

	
	\begin{proof}
		For a fixed $C$ of type $(a,b)$. The set $C$ fails to be a vertex cover if and only if there exists a facet
		$F$ disjoint from $C$. Since facets run over all sets in each type-class, such an $F$ exists
		with $\mathrm{type}(F)=(i,j)\in\alpha$ if and only if we can choose $i$ vertices of $A$ and $j$ vertices of $B$
		entirely from $V\setminus C$, that is, if and only if $i\le |A\setminus C|=n-a,$ and  $j\le |B\setminus C|=m-b.$ Equivalently, $(a,b)\le(n-i,m-j)=\varsigma(i,j)$. Thus, we have $C$ is  not a vertex cover if and only if thee exists $(i,j)\in\alpha$ with$(a,b)\le\varsigma(i,j)$ if and only if $(a,b)\in I(\varsigma(\alpha)).$
		This proves the first claim. For minimality, among all cover types, the minimal ones are precisely the minimal elements of
		$P\setminus I(\varsigma(\alpha))$ (if $(a,b)$ is not minimal there is a strictly smaller cover type,
		hence some cover $C'$ properly contained in $C$ exists). Thus, by definition of rowmotion, we have
		\[
		\min(P\setminus I(\varsigma(\alpha)))=\rho(\varsigma(\alpha)).
		\]
		So, the minimal vertex cover types form $\beta(\bigtriangleup)=\rho(\varsigma(\alpha))$.
	\end{proof}
	
	\begin{lemma}\label{lem:type-update}
		With hypotheses as above, if $\alpha(\bigtriangleup)$ is the facet-type antichain of $\bigtriangleup$, then
		the facet-type antichain of $\delta_{NF}(\bigtriangleup)$ is $\alpha(\delta_{NF}(\bigtriangleup))=\varsigma(\rho(\varsigma(\alpha(\bigtriangleup)))).$ 
	\end{lemma}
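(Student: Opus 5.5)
We combine Lemma~\ref{lem:facets-complements} with Lemma~\ref{lem:covers-as-ideal}. We also need the $\mathfrak{S}_A\times\mathfrak{S}_B$-invariance of $\delta_{NF}(\bigtriangleup)$, which Lemma~\ref{lem:equivariance} guarantees. Invariance matters because it makes the facet set of $\delta_{NF}(\bigtriangleup)$ a union of type-classes indexed by an antichain (Lemmas~\ref{lem:type-union} and \ref{lem:alpha-antichain}). So it is enough to determine exactly which types occur among its facets.

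\textbf{Step 1.} By Lemma~\ref{lem:facets-complements}, the facets of $\delta_{NF}(\bigtriangleup)$ are the sets $V\setminus C$ with $C\in\MIN(\bigtriangleup)$. Complementation acts on types by $\mathrm{type}(V\setminus C)=(n-a,m-b)=\varsigma(\mathrm{type}(C))$. Hence the facet-type set of $\delta_{NF}(\bigtriangleup)$ is $\varsigma$ applied to the set of types of minimal vertex covers of $\bigtriangleup$.

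\textbf{Step 2.} We identify that set of minimal-cover types with $\rho(\varsigma(\alpha))$, as asserted by Lemma~\ref{lem:covers-as-ideal}. This is where some care is needed, because the minimality argument in that lemma is stated quickly, and both inclusions rest on the first part of the lemma: being a cover depends only on type, and the cover types form the up-set $P\setminus I(\varsigma(\alpha))$.
\begin{itemize}[leftmargin=2em]
\item Let $C$ be a minimal cover of type $(a,b)$ and suppose some cover type $(a',b')<(a,b)$ exists. Deleting $a-a'$ vertices of $C\cap A$ and $b-b'$ vertices of $C\cap B$ gives a proper subset of $C$ of type $(a',b')$. That subset is a cover, contradicting minimality. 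So $\mathrm{type}(C)$ is a minimal element of $P\setminus I(\varsigma(\alpha))$.
\item Conversely, let $(a,b)$ be a minimal element of the up-set and let $C$ have type $(a,b)$. Then $C$ is a cover. Any proper subset of $C$ has a strictly smaller type, which lies in $I(\varsigma(\alpha))$, so that subset is not a cover. Hence $C$ is minimal.
\end{itemize}
Thus the minimal-cover types are exactly $\rho(\varsigma(\alpha))$. Moreover, every set of such a type is a minimal cover, so whole type-classes appear.

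\textbf{Step 3.} Applying $\varsigma$ to the result of Step~2, we get
\[
\mathcal{F}(\delta_{NF}(\bigtriangleup))=\bigcup_{(i,j)\in\varsigma(\rho(\varsigma(\alpha)))}\mathcal{M}(i,j).
\]
The index set is an antichain: $\rho$ outputs an antichain, and $\varsigma$ is an order-reversing involution, so it preserves antichains. Therefore it is the facet-type antichain of $\delta_{NF}(\bigtriangleup)$, with no extra maximality pruning required.

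The main obstacle is the degenerate boundary behaviour. Consider $\alpha=\{(0,0)\}$, i.e.\ $\bigtriangleup=\{\varnothing\}$, or the case where $\bigtriangleup$ has no facets. Here $I(\varsigma(\alpha))$ may be all of $P$ or empty, and we must check that $\rho$ returns $\varnothing$ or $\{(0,0)\}$ consistently with the convention that $\varnothing$ is a cover exactly when no nonempty facet exists. These edge cases I would dispose of directly before running the general argument.
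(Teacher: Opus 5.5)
Your proposal is correct and follows the paper's route: complementation acts on types by $\varsigma$, Lemma~\ref{lem:covers-as-ideal} identifies the minimal cover types as $\rho(\varsigma(\alpha))$, and applying $\varsigma$ gives the formula. Your two-sided check of the minimality claim and your antichain remark only fill in details the paper leaves implicit. The degenerate cases you flag need no separate treatment, because the general argument already handles them: for $\alpha=\{(0,0)\}$ no set is a cover and $\rho$ returns $\varnothing$, while for $\alpha=\varnothing$ the empty set is the unique minimal cover and $\rho$ returns $\{(0,0)\}$.
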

	
	\begin{proof}
		As the facets of $\delta_{NF}(\bigtriangleup)$ are complements of minimal vertex covers.
		At the level of types, complementation sends $(a,b)$ to $\varsigma(a,b)$. So, by Lemma~\ref{lem:covers-as-ideal},
		minimal cover types are $\rho(\varsigma(\alpha(\bigtriangleup)))$. Taking complements yields the required formula.
	\end{proof}
	
	\medskip
	
	In $P=[0,n]\times[0,m]$, every order ideal $J\subseteq P$ can be encoded by a lattice path from
	$(0,m+1)$ to $(n+1,0)$ using steps $E=(1,0)$ and $S=(0,-1)$. The path is the upper-right boundary
	of $J$ (Ferrers boundary). Equivalently, we obtain a word $w(J)\in\{E,S\}^{n+m+2}$ 	with $(n+1)$ letters $E$ and $(m+1)$ letters $S$.  The boundary word $w(J)$ of an order ideal $J\subseteq [0,n]\times[0,m]$ is the word in the alphabet $\{E,S\}$ obtained by reading the upper-right lattice-path boundary of $J$ from $(0,m+1)$ to $(n+1,0)$, where $E=(1,0)$ and $S=(0,-1)$. Thus $w(J)$ has length $n+m+2$, with $n+1$ letters $E$ and $m+1$ letters $S$.
	The following fact shows that rowmotion rotates the boundary word.
	\begin{lemma}\label{lem:rowmotion-rotation}
		Let $\psi$ be an antichain in $P$ and set $J=I(\psi)$.
		Then $I(\rho(\psi))$ has boundary word obtained from $w(J)$ by cyclic rotation by one letter $w(I(\rho(\psi)))=\mathrm{rot}(w(J)),$ 		where $\mathrm{rot}$ moves the first letter of the word to the end.
	\end{lemma}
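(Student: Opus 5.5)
The plan is to translate everything into column heights and compare the two ideals column by column. Encode an order ideal $J\subseteq P=[0,n]\times[0,m]$ by its column heights
\[
\lambda_a:=|\{b:(a,b)\in J\}|,\qquad 0\le a\le n .
\]
Since $J$ is a down-set, $(a,b)\in J$ if and only if $b<\lambda_a$, and $m+1\ge\lambda_0\ge\lambda_1\ge\dots\ge\lambda_n\ge 0$. The boundary word $w(J)$ is then read off from these heights. Starting at $(0,m+1)$, for $a=0,1,\dots,n$ in turn we take $\lambda_{a-1}-\lambda_a$ letters $S$ followed by one letter $E$, using the convention $\lambda_{-1}:=m+1$. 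We finish with $\lambda_n$ letters $S$, reaching $(n+1,0)$. The first step is to record this dictionary explicitly, so that the claim becomes an identity between two sequences of heights.

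\textbf{Step 1: compute $\rho(\psi)$ and the heights of $I(\rho(\psi))$.} Put $J=I(\psi)$. An element of $P\setminus J$ is minimal exactly when it is a lowest cell $(a,\lambda_a)$ of its column, with $\lambda_a\le m$, such that its left neighbour $(a-1,\lambda_a)$ lies in $J$ (or $a=0$). This happens precisely when $\lambda_a<\lambda_{a-1}$, with the same convention $\lambda_{-1}=m+1$. So $\rho(\psi)=\{(a,\lambda_a):\lambda_a<\lambda_{a-1}\}$, which is the set of ``outer corners'' of the Ferrers diagram of $J$. The ideal generated by these corners has column heights
\[
\lambda'_a=1+\max\{\lambda_c:\ c\ge a,\ \lambda_c<\lambda_{c-1}\},
\]
with the maximum over the empty set treated as $-1$, so that $\lambda'_a=0$ in that case. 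Because the corners are exactly the positions where the height strictly drops, the maximum is attained at the first corner $c\ge a$. Writing out the cases $\lambda_a<\lambda_{a-1}$ and $\lambda_a=\lambda_{a-1}$ should give a simple closed form for $\lambda'_a$ in terms of the heights of $J$.

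\textbf{Step 2: convert the heights back into a word and compare with $\mathrm{rot}(w(J))$.} In the word for $\lambda'$, the run of $S$'s before the $a$-th $E$ has length $\lambda'_{a-1}-\lambda'_a$. Using Step 1, I expect each such run to coincide with a run of $w(J)$ shifted by one letter position. I would argue this block by block, splitting on whether $w(J)$ begins with $E$ (that is, $\lambda_0=m+1$) or with $S$. Moving that first letter to the end then corresponds either to deleting one $S$ from the initial run and appending it after the last $E$, or to moving the first $E$ to the end. Both endpoints must stay $(0,m+1)$ and $(n+1,0)$, and this forces the letter counts $n+1$ and $m+1$ to be preserved.

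The main obstacle is the boundary bookkeeping. One has to check that the conventions $\lambda_{-1}=m+1$ and the empty-corner case $\lambda'_a=0$, together with the exact reading direction of $w(J)$ in the paper, produce a rotation by one letter in the stated direction (``first letter moved to the end'') rather than an inverse rotation or a reflected word. I would first test the identity on small cases, such as $J=\varnothing$, $J=P$ and a single corner in $[0,1]\times[0,1]$. These tests fix the orientation of the word, including whether the path should be read along the lower-left or the upper-right of the cells. Only then would I write the general column-by-column comparison, adjusting the reading convention of $w(J)$ within the paper's description if the small cases demand it.
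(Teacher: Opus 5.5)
There is a genuine gap, and it lies in Step~2. The identity you plan to verify there is false, so no bookkeeping can establish it. Your Step~1 is correct and already shows the failure.

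Take $n,m\ge 1$ and $\psi=\varnothing$. Then $J=\varnothing$, all $\lambda_a=0$, and $w(J)=S^{m+1}E^{n+1}$. Your formula gives $\lambda'=(1,0,\dots,0)$, i.e.\ $I(\rho(\varnothing))=\{(0,0)\}$, whose boundary word is $S^{m}ESE^{n}$. By contrast, $\mathrm{rot}(w(J))=S^{m}E^{n+1}S$. In fact $S^{m}ESE^{n}$ has two cyclic runs of $S$'s, while every rotation of $S^{m+1}E^{n+1}$ has only one. So it is not a rotation of $w(J)$ in either direction.

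For $n=m=1$ the full rowmotion orbit is $\varnothing\to\{(0,0)\}\to\{(0,0),(1,0),(0,1)\}\to P\to\varnothing$. Its boundary words are $SSEE,\,SESE,\,ESES,\,EESS$, which lie in two different rotation classes. In particular, $\{(0,0)\}$ has a word of rotation period $2$ but a rowmotion orbit of size $4$.

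The small-case test you propose would detect this, but your fallback of adjusting the reading convention cannot repair it. Reversing the reading direction, interchanging $E$ and $S$, or transposing the rectangle replaces $w$ by $\phi(w)$ for a fixed bijection $\phi$ with $\phi\circ\mathrm{rot}=\mathrm{rot}^{\pm1}\circ\phi$. Such a $\phi$ maps rotation classes to rotation classes, so the orbit above still straddles two classes.

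What is true is weaker. Rowmotion $\rho$ has order dividing $n+m+2$ on antichains of $[0,n]\times[0,m]$, and it is equivariantly in bijection with cyclic rotation of binary words. However, the bijection goes through a different encoding: the Stanley--Thomas word of the antichain $\psi$ (Propp--Roby). That word records, for each column and each row of $P$, whether it contains an element of $\psi$.

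The boundary word $w(J)$ itself is rotated exactly by $\mathrm{rot}$ under Striker--Williams promotion. Promotion toggles the diagonals $a-b=\mathrm{const}$ in increasing order; toggling diagonal $d$ swaps letters $d+m+1$ and $d+m+2$ of $w(J)$, and the composite carries the first letter to the end. Promotion is conjugate to $\rho$ but not equal to it. So a provable lemma must change either the word or the map.

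The paper's own argument ("the update advances the starting point by exactly one step") simply asserts the same false identity. Consequently, the later step that reads off the $\rho$-orbit length of $J_0$ from the rotation period of $w(J_0)$ is not justified by this lemma.
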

	
	\begin{proof}
		By the above boundary-corner description, the minimal elements of $P\setminus J$ are precisely the \emph{inner corners} of the Ferrers boundary of $J$. A point $(i,j)\notin J$ is minimal outside $J$ if and only if $(i-1,j)\in J$ and $(i,j-1)\in J$ (when defined),
		which is exactly the local pattern of an $ES$ corner on the boundary word.
		
		Rowmotion replaces the generating antichain of $J$ by those inner corners, that is, it shifts the cut point		on the cyclic boundary word forward by one boundary step. Concretely, if one reads the boundary as a cyclic word,
		choosing a different starting point changes $w(J)$ by rotation. The update $J\mapsto I(\rho(\psi))$ advances
		the starting point by exactly one step, hence performs $\mathrm{rot}$. 
	\end{proof}
	
	\medskip
	The following result presents the NF-number of the complete split graph $S_{n,m}.$
	 \begin{theorem}\label{thm:NFnumber-split}
		Let $G=S_{n,m}$ be a complete split graph on $n+m$ vertices with $n\ge 2$, $m\ge 1$.
		Then the NF-number of the edge complex $\bigtriangleup(G)$ satisfies
		 $\mathcal{NF}(G)=n+m+2.$ 
	\end{theorem}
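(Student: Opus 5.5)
The proof works at the level of facet-type antichains, using the machinery set up just before the statement. The graph complex $\bigtriangleup(G)$ of $S_{n,m}$ is invariant under $\mathfrak{S}_A\times\mathfrak{S}_B$. Its facets are the clique edges, of type $(2,0)$, and the cross edges, of type $(1,1)$. So Lemma~\ref{lem:type-union} applies with $\alpha_0=\{(2,0),(1,1)\}$, which is an antichain.

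By Lemma~\ref{lem:equivariance} every iterate is again invariant. Write $T=\varsigma\circ\rho\circ\varsigma$. Lemma~\ref{lem:type-update} gives $\alpha_k=T(\alpha_{k-1})$, and since $\varsigma$ is an involution, $\alpha_k=\varsigma\bigl(\rho^{k}(\varsigma(\alpha_0))\bigr)$. An invariant complex is determined by its facet-type antichain. Hence $\delta_{NF}^{(k)}(\bigtriangleup(G))=\bigtriangleup(G)$ as labeled complexes exactly when $\rho^k$ fixes $\varsigma(\alpha_0)=\{(n-2,m),(n-1,m-1)\}$.

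Next I would compute the boundary word of $J=I(\varsigma(\alpha_0))$. Columns $0,\dots,n-2$ have full height $m$, column $n-1$ has height $m-1$, and column $n$ is empty. Reading from $(0,m+1)$ gives
\[
w(J)=E^{\,n-1}\,S\,E\,S^{\,m}\,E .
\]
Cyclically this word is $E^{n}SES^{m}$. By Lemma~\ref{lem:rowmotion-rotation}, $\rho^k$ acts as rotation by $k$ letters, so $\rho^{n+m+2}$ fixes $\varsigma(\alpha_0)$. The exact labeled period is the primitive period of the cyclic word.

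The cyclic word has exactly two maximal $E$-runs, of lengths $n$ and $1$. Any proper period would have to permute the runs and so make these lengths equal. Since $n\ge 2$, the word is primitive, and the labeled orbit has exact length $n+m+2$. This proves $\mathcal{NF}(G)\le n+m+2$. The first iterates from Propositions~\ref{prop:NF1-split} and~\ref{prop:NF2-split} serve as a sanity check: they should match $\alpha_1=\{(0,m),(1,0)\}$ and $\alpha_2=\{(0,m-1)\}$.

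The main obstacle is the convention that the NF-number means return \emph{up to isomorphism}. For $0<k<n+m+2$ I must show $\delta_{NF}^{(k)}(\bigtriangleup(G))\not\cong\bigtriangleup(G)$. Any iterate isomorphic to $G$ is $1$-dimensional with the same number of facets and the same degree sequence. Its facet types would then lie among $(2,0),(1,1),(0,2)$, and as an antichain the only candidates are $\{(2,0),(1,1)\}$, $\{(0,2),(1,1)\}$ and $\{(2,0),(0,2)\}$. The last gives a disjoint union of two cliques, which has no cross edges, so it is not isomorphic to $G$.

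The first candidate occurs only at multiples of $n+m+2$, by primitivity. The second is the delicate case. It corresponds to the cyclic word $S^{m}E^{n}SE$, and this is a rotation of $E^nSES^m$ only when $n=m$. When $n\ne m$ it is excluded by counting edges, $\binom n2+nm$ against $\binom m2+nm$.

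When $n=m$ the $A\leftrightarrow B$ swap could give an earlier isomorphic return. I would first check this directly for $n=m=2$, where $S_{2,2}$ is $K_4$ minus an edge. If the coincidence really occurs, the statement needs the hypothesis $n\ne m$ or a correction in that case. Otherwise I need an extra invariant to separate the two; the dimension of the iterate is the first thing I would try, via Proposition~\ref{prop:dim}.
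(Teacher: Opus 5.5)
Your route is the paper's: facet-type antichains, $\alpha_{k+1}=\varsigma\rho\varsigma(\alpha_k)$, the word $E^{n-1}SES^mE$, and primitivity. You are right that the paper passes from ``no labeled return'' to ``no isomorphic return'' without argument. \textbf{The gap is the exact-period step.} Lemma~\ref{lem:rowmotion-rotation}, on which the paper's proof also rests, is false for the Ferrers boundary encoding. So primitivity of $w(J_0)$ says nothing about the rowmotion orbit of $\psi_0=\varsigma(\alpha_0)$.

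Your own sanity check exposes this. The antichain $\varsigma(\alpha_1)=\{(n-1,m),(n,0)\}$ generates an ideal with boundary word $E^nS^mES$. For $m\ge 2$ this is not a one-letter rotation of $E^{n-1}SES^mE$ in either direction; for $S_{3,2}$ you get $EESESSE$ versus $EEESSES$. Likewise, in $[0,1]^2$ the ideal $\{(0,0)\}$ has word $SESE$, of rotation period $2$, yet lies in a rowmotion orbit of size $4$.

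The correct tool is the Stanley--Thomas word $c_0\cdots c_n\,r_0\cdots r_m$ of an antichain. Here $c_i=1$ iff the antichain has an element with first coordinate $i$, and $r_j=0$ iff it has an element with second coordinate $j$. Rowmotion is exactly a one-step cyclic shift of this word. For $\psi_0=\{(n-2,m),(n-1,m-1)\}$ the word is $0^{n-2}110\,1^{m-1}00$, which is cyclically $0^n\,1^2\,0\,1^{m-1}$ (or $0^{n+1}1^2$ if $m=1$). A nontrivial rotation fixing it would have to shift the four runs by two, which forces $n=1$. So the labeled period is exactly $n+m+2$, and this repairs both of your labeled bounds.

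\textbf{The case $n=m$ is left open, and your suggested remedy cannot work.} When $n=m$, a complex with facet types $\{(0,2),(1,1)\}$ is isomorphic to $G$, so no invariant such as dimension separates it. What you must show is that $\psi'=\varsigma(\{(0,2),(1,1)\})=\{(n,m-2),(n-1,m-1)\}$ is not in the rowmotion orbit of $\psi_0$.

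Your boundary-word model gives the wrong answer here. For all $n,m$ (not only $n=m$) one has $SE^nSES^{m-1}=\mathrm{rot}^{\,n+m}(E^{n-1}SES^mE)$. The model would therefore put this graph at step $n+m$ and give $\mathcal{NF}(S_{n,n})\le 2n$. The paper's Example~\ref{ex:S32} refutes this, since neither $\bigtriangleup_2$ nor $\bigtriangleup_5$ is a graph there.

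With the correct word, $\psi'$ gives $0^{n-1}1^m001$, with cyclic runs $0^{n-1},1^m,0^2,1$. Its $0$-runs $\{n-1,2\}$ differ from $\{n,1\}$ unless $n=2$. For $n=m=2$ the cyclic run orders $(0^2,1^2,0,1)$ and $(0^2,1,0,1^2)$ differ. Directly, the $(2,2)$ orbit is $\{(0,2),(1,1)\},\{(1,2),(2,0)\},\{(2,1)\},\{(0,2)\},\{(1,0)\},\{(0,1),(2,0)\}$, so the swap never occurs.

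Two smaller points remain. Your candidate list must also exclude the single types and $\{(2,0),(1,1),(0,2)\}$; degree sequences do this. For $m=1$ the orbit is that of $K_{n+1}$, which passes through $\{\emptyset\}$ and the void complex, and the value $n+3$ is obtained only by counting both. That conflicts with $\mathcal{NF}(K_N)=N+1$ as quoted in Section~\ref{section 3}, and the paper's own equation $c(K_{n+1})=n+2=n+m+2$ fails for $m=1$. You should state the convention explicitly.
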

	
	\begin{proof}
		Let $\bigtriangleup_0:=\bigtriangleup(G)$ and $\bigtriangleup_{k}:=\delta_{NF}^{(k)}(\bigtriangleup_0)$, for $k\ge 1$. 	By Lemma~\ref{lem:equivariance}, each $\bigtriangleup_k$ is invariant under $\mathfrak{S}_A\times\mathfrak{S}_B$.
		Hence, by Lemmas~\ref{lem:type-union} and \ref{lem:alpha-antichain}, each $\mathcal{F}(\bigtriangleup_k)$ is a union
		of type-classes indexed by an antichain $\alpha_k\subseteq P=[0,n]\times[0,m]$.
		 If $m=1$, then $S_{n,1}\cong K_{n+1}$, and it is known (and easy) that $c(K_{n+1})=n+2=n+m+2$.
		So, we  assume that $m\ge 2$. For computing $\alpha_0$,  the facets of $\bigtriangleup_0$ are exactly the edges of $G$.
		These are:
		(i) all edges inside the clique $A$ (type $(2,0)$), and
		(ii) all cross edges between $A$ and $B$ (type $(1,1)$).
		Thus, we obtain $\alpha_0=\{(2,0),(1,1)\}.$ 
		Also, by Lemma~\ref{lem:type-update}, we have $\alpha_{k+1}=\varsigma(\rho(\varsigma(\alpha_k)))$f or $k\ge 0.$
		 Thus, the type evolution is conjugate to rowmotion on antichains of $P$ (conjugating by $\varsigma$ does not
		change orbit size, since $\varsigma$ is an involution). For computing the boundary word of $I(\varsigma(\alpha_0))$, we have
		 $\varsigma(\alpha_0)=\{(n-2,m),(n-1,m-1)\}.$ Therefore, $J_0:=I(\varsigma(\alpha_0))$ consists of all pairs $(i,j)\le(n-2,m)$ together with all $(i,j)\le(n-1,m-1)$.
		The Ferrers boundary path from $(0,m+1)$ to $(n+1,0)$ is  $E^{\,n-1}\,S\,E\,S^{\,m}\,E.$ 
		So, the boundary word is
		\begin{equation}\label{eq:initial-word-split}
			w(J_0)=E^{n-1}\,S\,E\,S^{m}\,E
			\quad\in\{E,S\}^{n+m+2}.
		\end{equation}
		Now, to prove that the orbit has full period $n+m+2$. Since $m\ge 2$, so the word \eqref{eq:initial-word-split} has a unique maximal run of $S$'s, namely $S^m$.
		Any nontrivial cyclic rotation moves this unique length-$m$ run to a different cyclic position, and hence
		cannot fix the word. Therefore, the rotation orbit of $w(J_0)$ has size exactly $n+m+2$. By Lemma~\ref{lem:rowmotion-rotation}, rowmotion acts as a one-step rotation on the boundary word,
		so the rowmotion orbit of $\varsigma(\alpha_0)$ has size $n+m+2$. Since $\alpha_{k+1}=\varsigma\rho\varsigma(\alpha_k)$,
		the orbit of $\alpha_0$ under $\delta_{NF}$ has the same size. Hence, we obtain
		 $\bigtriangleup_{n+m+2}\cong \bigtriangleup_0,$ and  $\bigtriangleup_k\not\cong \bigtriangleup_0$ for $1\le k<n+m+2,$ 		which is exactly $\mathcal{NF}(G)=n+m+2$.
	\end{proof}
	
	\medskip
	We will illustrate above facts by the following example. 
	\begin{example}\label{ex:S32}
		Let $G=S_{3,2} $ where \(A=\{x_1,x_2,x_3\}\) is a clique set and \(B=\{y_1,y_2\}\) is an independent set (see Figure \ref{fig:S32}). Then by Theorem~\ref{thm:NFnumber-split}, we obtain \(c(S_{3,2})=7\).
		
		\begin{figure}[h]
			\centering
			\begin{tikzpicture}[
				scale=1,
				every node/.style={circle, draw, inner sep=1.5pt},
				xnode/.style={circle, draw, inner sep=1.5pt},
				ynode/.style={circle, draw, inner sep=1.5pt},
				ed/.style={line width=0.5pt}
				]
				\node[xnode] (x1) at (0,0.9) {$x_1$};
				\node[xnode] (x2) at (-0.8,-0.2) {$x_2$};
				\node[xnode] (x3) at (0,-1.2) {$x_3$};
				
				\node[ynode] (y1) at (1.8,1.3) {$y_1$};
				\node[ynode] (y2) at (1.8,-1.3) {$y_2$};
				
				\draw[ed] (x1)--(x2);
				\draw[ed] (x2)--(x3);
				\draw[ed] (x1)--(x3);
				
				\foreach \x in {x1,x2,x3}{
					\foreach \y in {y1,y2}{
						\draw[ed] (\x)--(\y);
					}
				}
			\end{tikzpicture}
			\caption{The complete split graph \(S_{3,2}=K_3\vee\overline{K}_2\).}
			\label{fig:S32}
		\end{figure}
		 \noindent Set \(\bigtriangleup_0:=\bigtriangleup(S_{3,2})\) and \(\bigtriangleup_k:=\delta_{NF}^{(k)}(\bigtriangleup_0)\).
		At each step, we compute \(\MIN(\bigtriangleup_k)\) and then use
		\[
		\mathcal{F}(\bigtriangleup_{k+1})
		=\mathcal{F}(\delta_{NF}(\bigtriangleup_k))
		=\{\,V\setminus C: C\in\MIN(\bigtriangleup_k)\,\}.
		\]
		 The facets of \(\bigtriangleup_0\) are the edges of the graph
		\[
		\mathcal{F}(\bigtriangleup_0)
		=
		\Bigl\{\{x_1,x_2\},\{x_1,x_3\},\{x_2,x_3\}\Bigr\}
		 \cup\
		\Bigl\{\{x_i,y_j\}: i\in\{1,2,3\}, j\in\{1,2\}\Bigr\}.
		\]
		For \(\bigtriangleup_1=\delta_{NF}(\bigtriangleup_0)\), we first determine \(\MIN(\bigtriangleup_0)\), that is, the minimal vertex covers of the graph \(S_{3,2}\).
		We will show that
		\[
		\MIN(\bigtriangleup_0)
		=
		\Bigl\{
		A, (A\setminus\{x_1\})\cup B, (A\setminus\{x_2\})\cup B, (A\setminus\{x_3\})\cup B
		\Bigr\}.
		\]
		It is clear that  \(A\) is a vertex cover since every edge has an endpoint in \(A\), and it is minimal, since removing \(x_i\)
			leaves the uncovered cross-edge \(\{x_i,y_1\}\).   If a vertex cover \(C\) omits some \(x_i\), then all edges \(\{x_i,y_1\},\{x_i,y_2\}\) force \(y_1,y_2\in C\), so \(B\subseteq C\). Moreover, to cover the clique edges inside \(A\), at most one \(x\)-vertex can be omitted, and  hence \(C\cap A=A\setminus\{x_i\}\). Thus, we obtain \(C=(A\setminus\{x_i\})\cup B\). Minimality is clear by checking that removing any \(y_j\) uncovers \(\{x_i,y_j\}\), and removing any \(x\in A\setminus\{x_i\}\) uncovers an edge inside \(K_3\).  Now, taking complements in \(V=A\cup B\), we obtain 
		\begin{align*}
		\mathcal{F}(\bigtriangleup_1)
		&=
		\{\,V\setminus A, V\setminus((A\setminus\{x_1\})\cup B), V\setminus((A\setminus\{x_2\})\cup B), V\setminus((A\setminus\{x_3\})\cup B)\,\}\\
		&=
		\bigl\{\{y_1,y_2\},\{x_1\},\{x_2\},\{x_3\}\bigr\}.
		\end{align*}
		For \(\bigtriangleup_2=\delta_{NF}(\bigtriangleup_1)\), from \(\mathcal{F}(\bigtriangleup_1)=\{\{y_1,y_2\},\{x_1\},\{x_2\},\{x_3\}\}\), a set \(C\subseteq V\) is a vertex cover of \(\bigtriangleup_1\) if and only if \(C\) intersects each singleton facet \(\{x_i\}\), so \(x_1,x_2,x_3\in C\) and   \(C\) intersects \(\{y_1,y_2\}\), hence \(C\cap B\neq\varnothing\). Therefore, the minimal vertex covers are exactly
		\[
		\MIN(\bigtriangleup_1)=\bigl\{\{x_1,x_2,x_3,y_1\}, \{x_1,x_2,x_3,y_2\}\bigr\}.
		\]
		With complements, we obtain $\mathcal{F}(\bigtriangleup_2)=\bigl\{\,\{y_2\}, \{y_1\}\bigr\}.$ 
		For  \(\bigtriangleup_3=\delta_{NF}(\bigtriangleup_2)\), as \(\mathcal{F}(\bigtriangleup_2)=\{\{y_1\},\{y_2\}\}\), so a vertex cover must meet both singletons, and the unique minimal vertex cover is \(B=\{y_1,y_2\}\).
		Hence,  $\MIN(\bigtriangleup_2)=\{B\}$ and it follows that $	\mathcal{F}(\bigtriangleup_3)=\{V\setminus B\}=\{A\}=\bigl\{\{x_1,x_2,x_3\}\bigr\}.$\\
		For \(\bigtriangleup_4=\delta_{NF}(\bigtriangleup_3)\). Since \(\bigtriangleup_3\) has the single facet \(A\), a set \(C\) is a vertex cover of \(\bigtriangleup_3\) if and only if \(C\cap A\neq\varnothing\).
		Thus, the minimal vertex covers are exactly \(\{x_1\},\{x_2\},\{x_3\}\), that is,
		 $\MIN(\bigtriangleup_3)=\bigl\{\{x_1\},\{x_2\},\{x_3\}\bigr\}.$ With complements, we obtain
		\[
		\mathcal{F}(\bigtriangleup_4)=\bigl\{V\setminus\{x_1\}, V\setminus\{x_2\}, V\setminus\{x_3\}\bigr\}
		=
		\bigl\{\{x_2,x_3,y_1,y_2\},\{x_1,x_3,y_1,y_2\},\{x_1,x_2,y_1,y_2\}\bigr\}.
		\]
		For \(\bigtriangleup_5=\delta_{NF}(\bigtriangleup_4)\), we write the three facets of \(\bigtriangleup_4\) as
	 $F_1=V\setminus\{x_1\}, F_2=V\setminus\{x_2\},$ and $ F_3=V\setminus\{x_3\}.$ A set \(C\) is a vertex cover of \(\bigtriangleup_4\) if and only if it meets each \(F_i\).
		There are two natural minimal ways to do this: we consider (i), all \(x\)'s and one \(y\). If \(x_1,x_2,x_3\in C\), then \(C\cap F_i\neq\varnothing\) for all \(i\), and minimality forces
			\(C\cap B\) to be a singleton (otherwise we could remove one \(y\)). Thus, we get \(\{x_1,x_2,x_3,y_1\}\) and \(\{x_1,x_2,x_3,y_2\}\). (ii) All \(y\)'s and one \(x\). If \(y_1,y_2\in C\), then \(C\cap F_i\neq\varnothing\) for all \(i\), and minimality forces
			\(C\cap A\) to be a singleton. Thus, we get \(\{x_i,y_1,y_2\}\), for \(i=1,2,3\).  So, these five sets are indeed minimal and that no other minimal cover exists (any cover must either contain both \(y\)'s or contain all three \(x\)'s to meet all \(F_i\) simultaneously). Hence, we have
		$$
		\operatorname{MinCov}(\Delta_4)
		=
		\bigl\{
		\{y_1\},\{y_2\},
		\{x_1,x_2\},\{x_1,x_3\},\{x_2,x_3\}
		\bigr\}.
		$$
		Thus, with complements, we obtain
		\[
		\mathcal{F}(\bigtriangleup_5)=
		\bigl\{\{x_1,x_2,x_3,y_1\},\{x_1,x_2,x_3,y_2\},\{x_1,y_1,y_2\},\{x_2,y_1,y_2\},\{x_3,y_1,y_2\}\bigr\}.
		\]
		The same family occurs because complements swap one \(y\) with the other \(y\) and similarly for \(x\)-choices.
		For \(\bigtriangleup_6=\delta_{NF}(\bigtriangleup_5)\), and \(\bigtriangleup_5\) has five facets, namely 
		 $G_1=A\cup\{y_1\}, G_2=A\cup\{y_2\},
		H_1=\{x_1\}\cup B, H_2=\{x_2\}\cup B,$ and $ H_3=\{x_3\}\cup B.$ 
		A minimal vertex cover must meet both \(G_1\) and \(G_2\), and must meet all three \(H_i\).
		There are exactly two minimal ways: (i) pick both \(y\)'s: \(B=\{y_1,y_2\}\) meets \(G_1,G_2\) and all \(H_i\), (ii)  pick all \(x\)'s: \(A\) meets \(G_1,G_2\) and all \(H_i\). Otherwise pick one \(x_i\) to meet \(H_i\) and one \(y_j\) to meet \(G_j\), as the six pairs \(\{x_i,y_j\}\) meet every facet and are minimal. Thus, we obtain
		\[
		\MIN(\bigtriangleup_5)=\{A, B\} \cup \bigl\{\{x_i,y_j\}: i=1,2,3, j=1,2\bigr\},
		\]
		and hence 
		\begin{align*}
			\mathcal{F}(\bigtriangleup_6)
			&=\{V\setminus A\} \cup \{V\setminus B\} \cup \bigl\{V\setminus\{x_i,y_j\}: i=1,2,3, j=1,2\bigr\}\\
			&=\bigl\{\{y_1,y_2\}\bigr\} \cup \bigl\{\{x_i,x_{i'},y_j\}:1\le i<i'\le 3, j=1,2\bigr\} \cup \bigl\{\{x_1,x_2,x_3\}\bigr\}.
		\end{align*}
		For \(\bigtriangleup_7=\delta_{NF}(\bigtriangleup_6)\), and from the description of \(\mathcal{F}(\bigtriangleup_6)\), a minimal vertex cover of \(\bigtriangleup_6\) is obtained by choosing a triple that hits:
		(i) the facet \(A\),
		(ii) the facet \(B\),
		and (iii) every triple \(\{x_i,x_{i'},y_j\}\).
		It is clear that the minimal vertex covers are exactly the nine triples
		\[
		\MIN(\bigtriangleup_6)=\Bigl\{
		\{x_i,x_{i'},y_j\}:1\le i<i'\le 3, j=1,2
		\Bigr\} \cup \Bigl\{\{x_i,y_1,y_2\}: i=1,2,3\Bigr\}.
		\]
		With complements, we obtain precisely the nine edges listed at the starting step as
		\[
		\mathcal{F}(\bigtriangleup_7)=\{\,V\setminus C: C\in \MIN(\bigtriangleup_6)\,\}=\mathcal{F}(\bigtriangleup_0).
		\]
		Hence the first return occurs at \(k=7\), and therefore $\mathcal{NF}(S_{3,2})=7.$ 
		 For convenience, the facet lists \(\mathcal{F}(\bigtriangleup_k)\) for \(0\le k\le 7\) are:
		\begin{align*}
			\mathcal{F}(\bigtriangleup_0)= &\{\{x_1,x_2\},\{x_1,x_3\},\{x_2,x_3\}\}
			\cup\{\{x_i,y_j\}: i\in\{1,2,3\}, j\in\{1,2\}\};\\
			\mathcal{F}(\bigtriangleup_1)= &\bigl\{\{y_1,y_2\},\{x_1\},\{x_2\},\{x_3\}\bigr\};~
			\mathcal{F}(\bigtriangleup_2)= \bigl\{\{y_1\},\{y_2\}\bigr\};\\	\mathcal{F}(\bigtriangleup_3)=&\bigl\{\{x_1,x_2,x_3\}\bigr\};~	\mathcal{F}(\bigtriangleup_4)= \bigl\{\{x_1,x_2,y_1,y_2\},\{x_1,x_3,y_1,y_2\},\{x_2,x_3,y_1,y_2\}\bigr\};\\[4pt]
			\mathcal{F}(\bigtriangleup_5)= &
			\bigl\{\{x_1,x_2,x_3,y_1\},\{x_1,x_2,x_3,y_2\},\{x_1,y_1,y_2\},\{x_2,y_1,y_2\},\{x_3,y_1,y_2\}\bigr\};\\[4pt]
			\mathcal{F}(\bigtriangleup_6)= &
			\bigl\{\{y_1,y_2\}\bigr\} \cup\
			\bigl\{\{x_i,x_{i'},y_j\}:1\le i<i'\le 3, j=1,2\bigr\} \cup\
			\bigl\{\{x_1,x_2,x_3\}\bigr\};\\[4pt]
			\mathcal{F}(\bigtriangleup_7)= &\mathcal{F}(\bigtriangleup_0).
		\end{align*}
	\end{example}
	 Here $n=3$, $m=2$, so the initial boundary word from \eqref{eq:initial-word-split} is
	 $w(J_0)=E^{2}SE S^{2}E = EES\,E\,SSE,$ which has length $7$ and hence full rotation period $7$.

		This example illustrates the mass transfer mechanism in NF-iteration for split graphs:
		\[
		\text{graph (edges)} \longrightarrow \{B\}\cup\{\text{singletons in }A\} \longrightarrow \{\text{singletons in }B\} \longrightarrow \{A\}.
		\]
		For larger $(n,m)$ the same phenomenon persists, but the intermediate facet sizes become richer. After two steps one typically sees
		the family $\{B\setminus\{y_j\}:j=1,\dots,m\}$, and subsequent iterates correspond to complements of minimal transversals of these uniform families.

		\section{Double stars and the first three \(\delta_{\mathcal{NF}}\)-iterates}\label{section 5}
		Fix integers \(p,q\ge 1\), consider  $\V = \{u,v\} \cup A \cup B, A=\{a_1,\dots,a_p\},$ and $ B=\{b_1,\dots,b_q\}.$ The \emph{double star} \(D_{p,q}\) is the tree on \(\V\) with edges $\{u,v\}, \{u,a_i\}, 1\le i\le p $and $ \{v,b_j\}, 1\le j\le q.$
		 Next, result gives the minimal vertex covers of \(D_{p,q}\).
		\begin{lemma}\label{lem:mincov-double-star}
			Let \(D_{p,q}\) be the double star. Then the family of minimal vertex covers of \(D_{p,q}\) is
			$\MIN(D_{p,q})=\bigl\{\,\{u,v\}, \{u\}\cup B, \{v\}\cup A\,\bigr\}.$ 
		\end{lemma}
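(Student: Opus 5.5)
The plan is to classify a minimal vertex cover $C$ of $D_{p,q}$ by which of the two central vertices $u,v$ it contains. The argument has two halves: first check that the three listed sets are minimal covers, then show no other minimal cover exists. The edges of $D_{p,q}$ fall into three groups: the central edge $uv$, the pendant edges $ua_i$, and the pendant edges $vb_j$.

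\emph{The three sets are minimal covers.}
\begin{itemize}
\item $\{u,v\}$ meets every edge, since each edge contains $u$ or $v$. Dropping $u$ uncovers $ua_1$, and dropping $v$ uncovers $vb_1$; here $p,q\ge 1$ is used.
\item $\{u\}\cup B$ covers $uv$ and every $ua_i$ through $u$, and every $vb_j$ through $b_j$. Dropping $u$ uncovers $uv$, because $v$ is not in the set. Dropping $b_j$ uncovers $vb_j$.
\item $\{v\}\cup A$ is handled symmetrically.
\end{itemize}

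\emph{No other minimal covers.} Let $C$ be a minimal vertex cover. The edge $uv$ forces $u\in C$ or $v\in C$.
\begin{itemize}
\item If both $u,v\in C$, then $\{u,v\}\subseteq C$. Since $\{u,v\}$ is already a cover, minimality gives $C=\{u,v\}$.
\item If $u\in C$ and $v\notin C$, every edge $vb_j$ forces $b_j\in C$, so $\{u\}\cup B\subseteq C$. That set is a cover, so minimality gives equality.
\item If $v\in C$ and $u\notin C$, the symmetric argument gives $C=\{v\}\cup A$.
\end{itemize}

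There is no real obstacle. The only points needing care are the degenerate sizes: $p,q\ge 1$ guarantees that a pendant edge exists on each side, and this is used when checking minimality of $\{u,v\}$. Also, the three sets are pairwise distinct, and none contains another. For instance, $\{u,v\}\not\subseteq\{u\}\cup B$ because $v\notin B$.
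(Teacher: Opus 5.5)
Your proof is correct and follows essentially the same route as the paper: you first verify that the three sets are minimal covers, then split on whether $u$ and $v$ lie in $C$, and in each case minimality against a contained cover forces equality. The only cosmetic difference is in the case $u,v\in C$, where you conclude directly from $\{u,v\}\subseteq C$, whereas the paper removes a hypothetical extra vertex $z$ and checks that its single incident edge stays covered.
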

		
		\begin{proof}
			Recall that a set \(C\subseteq V\) is a vertex cover if every edge of \(D_{p,q}\) has at least one endpoint in \(C\).
			We prove two that  each of the  sets $\bigl\{\,\{u,v\}, \{u\}\cup B, \{v\}\cup A\,\bigr\}$ is a minimal vertex cover, and  every minimal vertex cover is one of these three sets.  Indeed \(\{u,v\}\) is a vertex cover, since every edge of \(D_{p,q}\) is incident to either \(u\) or \(v\). The edges \(\{u,a_i\}\) are incident to \(u\), the edges \(\{v,b_j\}\) are incident to \(v\), and the edge \(\{u,v\}\) is incident to both. Hence \(\{u,v\}\) meets every edge.  The set \(\{u,v\}\) is minimal, as if we remove \(u\), then the edge \(\{u,a_1\}\) (or any \(\{u,a_i\}\)) is uncovered because neither endpoint lies in \(\{v\}\). If we remove \(v\), then the edge \(\{v,b_1\}\) is uncovered since neither endpoint lies in \(\{u\}\). Thus, no proper subset of \(\{u,v\}\) is a vertex cover.
			
			Another vertex cover set is \(\{u\}\cup B\), as it covers the bridge edge \(\{u,v\}\) because it contains \(u\).
			It covers every edge \(\{u,a_i\}\) because it contains \(u\). 	Finally, it covers every edge \(\{v,b_j\}\) since it contains \(b_j\in B\) (for each \(j\)).	So, \(\{u\}\cup B\) is a vertex cover. 
			If we remove \(u\), then \(\{u,v\}\) becomes uncovered as \(v\notin \{u\}\cup B\). 	If we remove some \(b_j\in B\), then the edge \(\{v,b_j\}\) becomes uncovered because \(v\notin \{u\}\cup (B\setminus\{b_j\})\). 	Thus, no vertex can be removed while preserving the vertex cover property, so \(\{u\}\cup B\) is minimal. 	The same argument (swapping the roles of \(u,A\) with \(v,B\)) shows \(\{v\}\cup A\) is a minimal vertex cover.
			
			 Let \(C\subseteq V\) be a minimal vertex cover of \(D_{p,q}\). Then we show that $C$ is one of the above listed sets. We consider the following cases.
			 
			\smallskip\noindent
			(1). If \(u\notin C\), then for each \(i\in\{1,\dots,p\}\), the edge \(\{u,a_i\}\) must be covered by \(a_i\), so \(a_i\in C\).
			Hence \(A\subseteq C\).
			Also the edge \(\{u,v\}\) must be covered, since \(u\notin C\), it follows that \(v\in C\).
			Therefore, we obtain $\{v\}\cup A  \subseteq C.$ 	But as proved above, \(\{v\}\cup A\) is itself a vertex cover. So, by minimality of \(C\) we must have equality  $C=\{v\}\cup A.$ 
			
			\smallskip\noindent
			(2). If \(v\notin C\), then by the symmetric argument, each edge \(\{v,b_j\}\) forces \(b_j\in C\), so \(B\subseteq C\),
			and the edge \(\{u,v\}\) forces \(u\in C\). Thus, \(\{u\}\cup B\subseteq C\), and minimality implies \(C=\{u\}\cup B\).
			
			\smallskip\noindent
			(3). If\(u\in C\) and \(v\in C\), then we show that minimality forces \(C=\{u,v\}\). 	Assume for contradiction that \(C\) contains some additional vertex \(z\in C\setminus\{u,v\}\).  Then \(z\in A\cup B\). Now, consider \(C':=C\setminus\{z\}\).
			We claim \(C'\) is still a vertex cover, and thereby we get contradiction to the minimality of \(C\).  Indeed, the only edges incident to \(z\) are:  if \(z=a_i\in A\), then the only incident edge is \(\{u,a_i\}\), which is still covered by \(u\in C'\); and  if \(z=b_j\in B\), then the only incident edge is \(\{v,b_j\}\), which is still covered by \(v\in C'\). 	All other edges are unchanged by removing \(z\). Hence \(C'\) remains a vertex cover, a contradiction.
			Therefore, no such \(z\) exists and \(C=\{u,v\}\). Thus, every minimal vertex cover \(C\) is exactly one of
			\(\{u,v\}\), \(\{u\}\cup B\), \(\{v\}\cup A\), completing the proof.
		\end{proof}
		
		\medskip
		The following result gives the first iteration of \(\bigtriangleup_1=\delta_{\mathcal{NF}}(\bigtriangleup_0)\) where $ \bigtriangleup_0:=D_{p,q}.$
		\begin{proposition}\label{prop:NF1-double-star}
			Let \(\bigtriangleup_0:=D_{p,q}\) be the double star on $V=\{u,v\}\cup A\cup B,$ 
			and let \(\bigtriangleup_1:=\delta_{\mathcal{NF}}(\bigtriangleup_0)\). Then
			 $\F(\bigtriangleup_1)=\bigl\{\,A\cup B, A\cup\{v\}, B\cup\{u\}\,\bigr\}.$ 
		\end{proposition}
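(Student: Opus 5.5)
The plan is a direct application of Lemma~\ref{lem:facets-complements} to the list of minimal vertex covers already computed in Lemma~\ref{lem:mincov-double-star}. Since $\bigtriangleup_0=D_{p,q}$ is regarded as a one-dimensional complex whose facets are its edges, its minimal vertex covers are exactly the minimal vertex covers of the graph. Hence
\[
\MIN(\bigtriangleup_0)=\bigl\{\{u,v\},\ \{u\}\cup B,\ \{v\}\cup A\bigr\},
\]
and Lemma~\ref{lem:facets-complements} gives $\F(\bigtriangleup_1)=\{\V\setminus C : C\in\MIN(\bigtriangleup_0)\}$.

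The key step is computing the three complements in $\V=\{u,v\}\cup A\cup B$, using that the four pieces $\{u\},\{v\},A,B$ are pairwise disjoint:
\begin{itemize}
\item $\V\setminus\{u,v\}=A\cup B$;
\item $\V\setminus(\{u\}\cup B)=\{v\}\cup A$;
\item $\V\setminus(\{v\}\cup A)=\{u\}\cup B$.
\end{itemize}
This gives exactly the three sets in the statement.

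For completeness I would also note that these three sets are pairwise incomparable under inclusion. The set $A\cup B$ contains neither $u$ nor $v$, while $A\cup\{v\}$ contains $v$ but no element of $B$ (as $q\ge1$), and $B\cup\{u\}$ contains $u$ but no element of $A$ (as $p\ge1$). Lemma~\ref{lem:facets-complements} already asserts that these complements are the facets, so this check only confirms that no listed set is redundant.

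There is no genuine obstacle here. The only point needing care is that Lemma~\ref{lem:mincov-double-star} is invoked for the graph complex. The hypothesis $p,q\ge1$ ensures both leaf sets are nonempty, so the three covers, and hence the three facets, are distinct.
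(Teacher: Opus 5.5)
Your proposal is correct and follows the paper's proof essentially verbatim: you apply the facet--complement correspondence of Lemma~\ref{lem:facets-complements} to the three minimal covers from Lemma~\ref{lem:mincov-double-star} and compute the three complements. Your extra pairwise-incomparability check mirrors the paper's closing remark on inclusion-maximality, and it is automatic anyway, since complements of distinct minimal covers are incomparable.
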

		
		\begin{proof}
			Since by definition of the NF-complex,	the facets of \(\delta_{\mathcal{NF}}(\bigtriangleup_0)\) are precisely the complements in \(V\) of the minimal vertex covers of \(\bigtriangleup_0\)
			\[
			\F(\delta_{\mathcal{NF}}(\bigtriangleup_0))=\bigl\{\,V\setminus C : C\in \MIN(\bigtriangleup_0)\,\bigr\}.
			\]
			So, by Lemma~\ref{lem:mincov-double-star}, the minimal vertex covers of \(\bigtriangleup_0=D_{p,q}\) are exactly
			\[
			\MIN(\bigtriangleup_0)=\bigl\{\,\{u,v\}, \{u\}\cup B, \{v\}\cup A\,\bigr\}.
			\]
			Therefore, we compute the three complements in \(V\) as
			 $V\setminus\{u,v\}  =  A\cup B,$ 
			\[
			V\setminus(\{u\}\cup B)  =  (\{u,v\}\cup A\cup B)\setminus(\{u\}\cup B)
			 =  A\cup\{v\},
			\]
			\[
			V\setminus(\{v\}\cup A)  =  (\{u,v\}\cup A\cup B)\setminus(\{v\}\cup A)
			 =  B\cup\{u\}.
			\]
			Hence, we obtain
			\[
			\F(\bigtriangleup_1)=\F(\delta_{\mathcal{NF}}(\bigtriangleup_0))
			=\bigl\{\,A\cup B, A\cup\{v\}, B\cup\{u\}\,\bigr\}.
			\]
			Finally, each of these complements is inclusion-maximal among the faces of \(\bigtriangleup_1\). They are derived as complements of minimal vertex covers, so by construction, they constitute facets, and no other facets exist beyond these complements. This completes the proof.
		\end{proof}
		
		\medskip
		Next, we discuss the minimal vertex cover of \(\bigtriangleup_1\).
		 \begin{lemma}\label{lem:mincov-Delta1}
			Let \(p,q\ge 1\) and let \(\bigtriangleup_1=\delta_{\mathcal{NF}}(D_{p,q})\) be the first NF-iterate of the double star.
			Then the minimal vertex covers of \(\bigtriangleup_1\) are exactly the \(2\)-sets
			\[
			\MIN(\bigtriangleup_1)=
			\bigl\{\{u,a_i\}:1\le i\le p\bigr\} \cup\
			\bigl\{\{v,b_j\}:1\le j\le q\bigr\} \cup\
			\bigl\{\{a_i,b_j\}:1\le i\le p, 1\le j\le q\bigr\}.
			\]
		\end{lemma}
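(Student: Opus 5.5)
The plan is to use Proposition~\ref{prop:NF1-double-star}: $\bigtriangleup_1$ has exactly three facets, $F_1=A\cup B$, $F_2=A\cup\{v\}$ and $F_3=B\cup\{u\}$. So $C\subseteq V$ is a vertex cover of $\bigtriangleup_1$ exactly when it meets all three of these sets. We then (i) check that each listed $2$-set meets $F_1,F_2,F_3$ and is minimal, and (ii) show that every minimal cover is one of them.

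\textbf{Step 1: the listed sets are minimal covers.} The set $\{u,a_i\}$ meets $F_1$ and $F_2$ through $a_i$ and meets $F_3$ through $u$. Symmetrically, $\{v,b_j\}$ meets $F_1,F_3$ through $b_j$ and meets $F_2$ through $v$. The set $\{a_i,b_j\}$ meets $F_2$ through $a_i$, $F_3$ through $b_j$, and $F_1$ through both. For minimality, no single vertex meets all three facets: $u$ misses $F_1$ and $F_2$, $v$ misses $F_1$ and $F_3$, $a_i$ misses $F_3$, and $b_j$ misses $F_2$. Hence every cover has at least two elements, and each listed $2$-set is minimal.

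\textbf{Step 2: every minimal cover is listed.} Let $C$ be a minimal vertex cover. Since $C$ meets $F_2=A\cup\{v\}$, it contains $v$ or some $a_i$. Since $C$ meets $F_3=B\cup\{u\}$, it contains $u$ or some $b_j$. This gives four cases.
\begin{itemize}
\item $C\ni a_i, b_j$: then $\{a_i,b_j\}\subseteq C$ is already a cover, so $C=\{a_i,b_j\}$.
\item $C\ni a_i, u$: then $\{u,a_i\}\subseteq C$, so $C=\{u,a_i\}$.
\item $C\ni v, b_j$: then $\{v,b_j\}\subseteq C$, so $C=\{v,b_j\}$.
\item $C\ni u, v$ only: $\{u,v\}$ misses $F_1=A\cup B$, so $C$ must also contain some $a_i$ or $b_j$. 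Then $C$ contains $\{u,a_i\}$ or $\{v,b_j\}$, and minimality again forces $C$ to equal that pair.
\end{itemize}
In every case $C$ is one of the listed $2$-sets.

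The only place needing care is the fourth case. There $\{u,v\}$ itself is not a cover, and one must make sure the extra vertex forced by $F_1$ collapses $C$ onto one of the listed pairs rather than producing a new minimal cover of size $3$. The rest is routine bookkeeping. The hypotheses $p,q\ge1$ guarantee that $A$ and $B$ are nonempty, so all three families in the statement are nonempty.
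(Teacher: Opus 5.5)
Your proof is correct and follows essentially the same route as the paper: check that the listed pairs are covers, rule out singletons, then show that every minimal cover contains one of the listed pairs by taking a vertex of $C$ in $A\cup\{v\}$ and one in $B\cup\{u\}$. Your fourth case is handled more carefully than the paper's Step 3. When those two chosen vertices happen to be $v$ and $u$, the paper claims a contradiction with $C\cap(A\cup B)\neq\varnothing$, which is invalid because the subset $\{u,v\}$ of $C$ need not meet $A\cup B$; you instead use a vertex of $C\cap(A\cup B)$ to find $\{u,a_i\}$ or $\{v,b_j\}$ inside $C$, which is the correct argument.
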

		
		\begin{proof}
			By Proposition~\ref{prop:NF1-double-star}, the facets of \(\bigtriangleup_1\) are $F_0:=A\cup B,  F_v:=A\cup\{v\}, $ and $ F_u:=B\cup\{u\}.$
			By definition, a set \(C\subseteq V\) is a vertex cover of \(\bigtriangleup_1\) if and only if $C\cap F_0\neq\varnothing,C\cap F_v\neq\varnothing, $ and $ C\cap F_u\neq\varnothing.$ 
			We prove the claim in three steps.
			
			\medskip\noindent
			\textbf{(1).} We need to show that every \(2\)-set listed in the statement is a vertex cover. 
			Let \(i\in\{1,\dots,p\}\) and \(j\in\{1,\dots,q\}\). Then we have the following facts. (i)   If \(C=\{u,a_i\}\), then \(C\cap F_0\supseteq\{a_i\}\neq\varnothing\), \(C\cap F_v\supseteq\{a_i\}\neq\varnothing\),
				and \(C\cap F_u\supseteq\{u\}\neq\varnothing\). Hence, \(C\) is a vertex cover. (ii)   If \(C=\{v,b_j\}\), then \(C\cap F_0\supseteq\{b_j\}\neq\varnothing\), \(C\cap F_v\supseteq\{v\}\neq\varnothing\),
				and \(C\cap F_u\supseteq\{b_j\}\neq\varnothing\). Hence, \(C\) is a vertex cover. (iii)   If \(C=\{a_i,b_j\}\), then \(C\cap F_0=\{a_i,b_j\}\neq\varnothing\), \(C\cap F_v\supseteq\{a_i\}\neq\varnothing\),
				and \(C\cap F_u\supseteq\{b_j\}\neq\varnothing\). Hence, \(C\) is a vertex cover. 
			
			\medskip\noindent
			\textbf{(2).} No singleton is a vertex cover as for each possible singleton \(\{x\}\subseteq V\), we can exhibit a facet disjoint from it, as \(
			\{u\}\cap F_v=\varnothing,		\{v\}\cap F_u=\varnothing,		\{a_i\}\cap F_u=\varnothing, \) and \(\{b_j\}\cap F_v=\varnothing.
			\)
			Thus, every vertex cover has size at least \(2\).
			
			\medskip\noindent
			\textbf{(3).} Every minimal vertex cover has size \(2\) and must be one of the listed \(2\)-sets.	Let \(C\) be a minimal vertex cover of \(\bigtriangleup_1\).
			By Step 2, \(|C|\ge 2\). We show \(|C|=2\).
			Since \(C\cap F_v\neq\varnothing\) and \(C\cap F_u\neq\varnothing\), we may choose $x\in C\cap F_v, a$ and $ y\in C\cap F_u.$	Then, the \(2\)-set \(C':=\{x,y\}\) satisfies \(C'\subseteq C\) and intersects both \(F_v\) and \(F_u\).
			We claim \(C'\) also intersects \(F_0\), hence is itself a vertex cover. The minimality of \(C\) then forces \(C=C'\).
			 To see \(C'\cap F_0\neq\varnothing\), note that \(x\in F_v=A\cup\{v\}\) and \(y\in F_u=B\cup\{u\}\).
			If \(x\in A\) or \(y\in B\), then \(x\in F_0\) or \(y\in F_0\), respectively, and we are done.
			So, the only remaining possibility is \(x=v\) and \(y=u\), but then $C'=\{u,v\} $ and $C'\cap F_0=\varnothing,$	contradicting that \(C\) (hence \(C'\subseteq C\)) must meet \(F_0=A\cup B\).			Therefore, \(C'\cap F_0\neq\varnothing\), and \(C'\) is a vertex cover. By minimality of \(C\), we conclude that  \(C=C'\). So, every minimal vertex cover has size exactly \(2\).
			
			Finally, since \(C=\{x,y\}\) with \(x\in F_v=A\cup\{v\}\) and \(y\in F_u=B\cup\{u\}\), and we have excluded the pair \(\{u,v\}\),
			the only possibilities are: \(x\in A\) and \(y=u\), giving \(C=\{u,a_i\}\) for some \(i\);  \(x=v\) and \(y\in B\), giving \(C=\{v,b_j\}\) for some \(j\), and   \(x\in A\) and \(y\in B\), giving \(C=\{a_i,b_j\}\) for some \(i,j\). These are exactly the \(2\)-sets listed in the statement. Each of the above listed \(2\)-set are minimal, since every singleton fails to be a cover (Step 2). Hence, no proper subset of a listed \(2\)-set can be a vertex cover. That completes the proof of  the lemma.
		\end{proof}
		
		\medskip
		Next, we have the second iteration \(\bigtriangleup_2=\delta_{\mathcal{NF}}(\bigtriangleup_1)\).
		\begin{proposition}
			Let \(\bigtriangleup_2:=\delta_{\mathcal{NF}}(\bigtriangleup_1)\). Then \(\F(\bigtriangleup_2)\) consists of the complements of the above $2$-covers
			\begin{align*}
			\F(\bigtriangleup_2)=&
			\bigl\{\V\setminus\{u,a_i\}: 1\le i\le p\bigr\}
			 \cup\
			\bigl\{\V\setminus\{v,b_j\}: 1\le j\le q\bigr\}\\
			& \cup\
			\bigl\{\V\setminus\{a_i,b_j\}: 1\le i\le p, 1\le j\le q\bigr\}.
			\end{align*}
		\end{proposition}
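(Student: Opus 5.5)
This is a direct application of Lemma~\ref{lem:facets-complements} to $\bigtriangleup_1$, using Lemma~\ref{lem:mincov-Delta1} for the list of minimal vertex covers. Since $\delta_{\mathcal{NF}}(\bigtriangleup_1)=\bigtriangleup_2$, Lemma~\ref{lem:facets-complements} gives
\[
\F(\bigtriangleup_2)=\{\V\setminus C : C\in\MIN(\bigtriangleup_1)\}.
\]
Substituting the three families of $2$-sets $\{u,a_i\}$, $\{v,b_j\}$ and $\{a_i,b_j\}$ from Lemma~\ref{lem:mincov-Delta1} then yields exactly the three displayed families of complements.

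The only point needing comment is that these complements are genuinely facets, that is, pairwise incomparable and not contained in one another. The cleanest route is to observe that the sets in $\MIN(\bigtriangleup_1)$ form an antichain, being minimal covers. Complementation in $\V$ reverses inclusion, so their complements also form an antichain. One can also check this concretely: all the complements have the same size $|\V|-2=p+q$, and they are pairwise distinct because the $2$-sets are distinct. Hence none properly contains another.

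I would also record explicitly, for later use, the complements themselves:
\[
\V\setminus\{u,a_i\}=\{v\}\cup(A\setminus\{a_i\})\cup B,\qquad \V\setminus\{v,b_j\}=\{u\}\cup A\cup(B\setminus\{b_j\}),
\]
\[
\V\setminus\{a_i,b_j\}=\{u,v\}\cup(A\setminus\{a_i\})\cup(B\setminus\{b_j\}).
\]
In particular $\bigtriangleup_2$ is pure of dimension $p+q-1$, consistent with Proposition~\ref{prop:dim}, since $\vartheta(\bigtriangleup_1)=2$.

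There is no real obstacle here. The substantive work, namely showing that every minimal cover of $\bigtriangleup_1$ is one of the listed $2$-sets, was already done in Lemma~\ref{lem:mincov-Delta1}. The only thing to be careful about is to justify maximality via the equal cardinalities rather than merely asserting it.
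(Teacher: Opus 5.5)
Your proof is correct and matches the paper's argument exactly: apply Lemma~\ref{lem:facets-complements} to $\bigtriangleup_1$, using the minimal covers listed in Lemma~\ref{lem:mincov-Delta1}. Your separate maximality check via equal cardinalities is valid but redundant, since Lemma~\ref{lem:facets-complements} already says the facets are exactly the complements of the minimal covers.
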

		
		\begin{proof}
			By Lemma \ref{lem:mincov-Delta1}, the proof is immediate from the defining rule \(\F(\delta_{\mathcal{NF}}(\bigtriangleup))=\{\V\setminus C: C\in\MIN(\bigtriangleup)\}\).
		\end{proof}
		
		\medskip
		 Let \(L:=A\cup\{v\}\) and \(R:=B\cup\{u\}\). The family of 2-sets in Lemma \ref{lem:mincov-Delta1} is precisely the edge set of the bipartite graph $H  =  K_{|L|,|R|}\setminus \{uv\},$ 	where \(u\in R\), \(v\in L\), and the unique missing cross-edge is \(\{u,v\}\).
			Then \(\bigtriangleup_2\) is the simplicial complex whose facets are \(\V\setminus e\) for edges \(e\in E(H)\). 
		
		The following lemma gives minimal vertex covers of \(\bigtriangleup_2\).
		\begin{lemma}\label{lem:mincov-Delta2}
			Let \(\bigtriangleup_2=\delta_{\mathcal{NF}}(\bigtriangleup_1)\) be the second NF-iterate of \(D_{p,q}\), and
			let \(L:=A\cup\{v\}\) and \(R:=B\cup\{u\}\). 
			Then 
			\[
			\MIN(\bigtriangleup_2)=\binom{A\cup\{v\}}{2} \cup \binom{B\cup\{u\}}{2} \cup \{\{u,v\}\}.
			\]
			Equivalently, \(\MIN(\bigtriangleup_2)\) is the edge set of the dumbbell graph consisting of a clique on \(A\cup\{v\}\),
			a clique on \(B\cup\{u\}\), and the bridge edge \(\{u,v\}\).
		\end{lemma}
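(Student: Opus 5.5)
The key reformulation is that a set $C\subseteq\V$ is a vertex cover of $\bigtriangleup_2$ if and only if $C$ meets every facet $\V\setminus e$ with $e\in E(H)$. Equivalently, $C\not\subseteq e$ for every edge $e$ of $H$. Since every facet has size $|\V|-2$, a nonempty set fails to be a cover exactly when it is contained in some edge of $H$. So the covers of $\bigtriangleup_2$ are precisely the subsets of $\V$ that are not contained in any edge of $H=K_{|L|,|R|}\setminus\{uv\}$.

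The first step disposes of singletons. No singleton $\{z\}$ is a cover, because every vertex lies on some edge of $H$. Here I use $p,q\ge 1$: the vertex $u$ lies on $ua_1$, $v$ lies on $vb_1$, and each $a_i$ and each $b_j$ lies on $a_ib_1$ or $a_1b_j$.

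The second step classifies the $2$-subsets. A $2$-set $\{z,w\}$ is a cover if and only if $\{z,w\}\notin E(H)$. The non-edges of $H$ on $\V=L\cup R$ are the pairs inside $L$, the pairs inside $R$, and the single missing cross pair $\{u,v\}$. This gives exactly $\binom{L}{2}\cup\binom{R}{2}\cup\{\{u,v\}\}$, and each such pair is a minimal cover, since neither of its singletons is a cover.

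The third step is the main point: every cover $C$ with $|C|\ge 3$ contains one of these $2$-sets, so it is not minimal. Such a $C$ has three elements. By pigeonhole applied to the bipartition $L\cup R$, two of them lie in the same part, and that pair belongs to $\binom{L}{2}$ or $\binom{R}{2}$. Hence the minimal covers are exactly the listed $2$-sets.

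For the equivalent formulation, these pairs, viewed as edges, form a clique on $L=A\cup\{v\}$, a clique on $R=B\cup\{u\}$, and the bridge $uv$. This is the dumbbell $B_{p+1,q+1}$. I expect no genuine obstacle; the only care needed is the degenerate check in the first step that every vertex is covered by an edge of $H$ when $p=1$ or $q=1$.
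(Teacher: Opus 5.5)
Your proposal is correct and follows essentially the same route as the paper's proof. Both recast covers of $\bigtriangleup_2$ as sets not contained in any edge of $H\cong K_{p+1,q+1}\setminus\{uv\}$, rule out singletons because $H$ has no isolated vertices, identify the $2$-covers with the non-edges of $H$, and apply pigeonhole on $L\cup R$ to show that no cover of size at least $3$ is minimal. Your explicit check that every vertex lies on an edge of $H$ (using $p,q\ge 1$) makes precise a step the paper only asserts.
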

		
		\begin{proof}
			By Lemma~\ref{lem:mincov-Delta1} and the definition of \(\delta_{\mathcal{NF}}\),
			the facets of \(\bigtriangleup_2=\delta_{\mathcal{NF}}(\bigtriangleup_1)\) are complements of the minimal vertex covers of \(\bigtriangleup_1\). Let \(H\) be the bipartite graph on the bipartition \(L\cup R\) with edge set $E(H)=\bigl(\{u\}\times A\bigr) \cup \bigl(A\times B\bigr) \cup \bigl(B\times \{v\}\bigr), $ or equivalently \(H\cong K_{p+1,q+1}\setminus\{uv\}\) (the unique missing cross-edge is \(\{u,v\}\)).
			Since \(\MIN(\bigtriangleup_1)\) is exactly the set of edges of \(H\), we may write
			\begin{equation}\label{eq:facets-Delta2}
				\F(\bigtriangleup_2)=\{ V\setminus e  :  e\in E(H) \}.
			\end{equation}
			We consider the following steps.\\
			\medskip\noindent
			\textbf{(1).} Here we reformulate the vertex cover condition. Let \(C\subseteq V\). Then \(C\) is a vertex cover of \(\bigtriangleup_2\) if and only if it intersects every facet of \(\bigtriangleup_2\). 	Using \eqref{eq:facets-Delta2}, this is equivalent to: $\forall ~e\in E(H),$   $C\cap (V\setminus e)\neq\varnothing.$ 
			For a fixed \(e\subseteq V\), the condition \(C\cap (V\setminus e)=\varnothing\) holds if and only if
			\(C\subseteq e\). Hence, the above condition is equivalent to
			\begin{equation}\label{eq:not-contained}
				(\forall ~e\in E(H))\qquad\text{and}\qquad C\not\subseteq e.
			\end{equation}
			
			\medskip\noindent
			\textbf{(2)}
			For fixed \(x\in V\). Since \(H\) has no isolated vertices (every vertex in \(L\cup R\) is incident to some edge of \(H\)),
			there exists an edge \(e\in E(H)\) with \(x\in e\). Then \(\{x\}\subseteq e\), so \eqref{eq:not-contained} fails.
			Thus, no singleton is a vertex cover of \(\bigtriangleup_2\).
			
			\medskip\noindent
			\textbf{(3).} Now, we characterize the \(2\)-element vertex covers.	Let \(C=\{x,y\}\) with \(x\neq y\). Then form \eqref{eq:not-contained}, we see that \(C\not\subseteq e\) for every \(e\in E(H)\).
			But each \(e\in E(H)\) is itself a \(2\)-set. Therefore, we have
			 $\{x,y\}\subseteq e$  if and only if $ \{x,y\}=e.$ 	Hence, we obtain
			\begin{equation}\label{eq:2cover-iff-nonedge}
				\{x,y\}\text{ is a vertex cover of }\bigtriangleup_2
				\quad\Longleftrightarrow\quad
				\{x,y\}\notin E(H).
			\end{equation}
			It follows that, a \(2\)-set is a vertex cover of \(\bigtriangleup_2\) if and only if it is a non-edge of \(H\).
			
			Since \(H\) is bipartite on \(L\cup R\), it has  no edges inside \(L\) and no edges inside \(R\).
			Therefore every \(2\)-subset of \(L\) and every \(2\)-subset of \(R\) is a non-edge of \(H\),
			and hence  a vertex cover of \(\bigtriangleup_2\) (by \eqref{eq:2cover-iff-nonedge}).
			Moreover, among cross pairs \(L\times R\), the only non-edge is \(\{u,v\}\) (because \(H\cong K_{p+1,q+1}\setminus\{uv\}\)).
			Thus, the family of \(2\)-element vertex covers of \(\bigtriangleup_2\) is exactly
			\[
			\binom{L}{2} \cup \binom{R}{2} \cup \{\{u,v\}\}
			=
			\binom{A\cup\{v\}}{2} \cup \binom{B\cup\{u\}}{2} \cup \{\{u,v\}\}.
			\]
			By step 2, no singleton is a vertex cover, so every \(2\)-element vertex cover is automatically minimal.
			Thus, the family displayed in step 3 is contained in \(\MIN(\bigtriangleup_2)\). Lastly, there exists no vertex cover of size \(|C|\ge 3\) is minimal. 
			Let \(C\subseteq V\) with \(|C|\ge 3\). Since \(V=L\cup R\) is a bipartition, at least two vertices of \(C\) lie in the same part,
			say \(x,y\in C\cap L\) (the case \(C\cap R\) is analogous). Then \(\{x,y\}\in \binom{L}{2}\), so by Step 3
			the \(2\)-set \(\{x,y\}\) is a vertex cover of \(\bigtriangleup_2\).
			Hence, \(C\) is not minimal (it contains a smaller vertex cover). Combining the above steps, we conclude that the \emph{minimal} vertex covers of \(\bigtriangleup_2\) are exactly
			\[
			\MIN(\bigtriangleup_2)=\binom{A\cup\{v\}}{2} \cup \binom{B\cup\{u\}}{2} \cup \{\{u,v\}\}.
			\]
			Thus, for the dumbbell graph, the above family is precisely the edge set of two cliques
			(on \(A\cup\{v\}\) and on \(B\cup\{u\}\)) together with the bridge \(\{u,v\}\).
		\end{proof}
		
		\medskip
		From the above proposition and Lemma \ref{lem:facets-complements}, we have the following result which gives the third iteration  \(\bigtriangleup_3=\delta_{\mathcal{NF}}(\bigtriangleup_2)\).
		\begin{proposition}
			Let \(\bigtriangleup_3:=\delta_{\mathcal{NF}}(\bigtriangleup_2)\). Then
			\[
			\F(\bigtriangleup_3)=
			\Bigl\{\V\setminus e : e\in \binom{A\cup\{v\}}{2}\Bigr\}
			 \cup\
			\Bigl\{\V\setminus e : e\in \binom{B\cup\{u\}}{2}\Bigr\}
			 \cup\
			\{\V\setminus\{u,v\}\}.
			\]
		\end{proposition}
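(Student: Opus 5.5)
This is a direct application of Lemma~\ref{lem:facets-complements} to $\bigtriangleup_2$, using the list of minimal vertex covers from Lemma~\ref{lem:mincov-Delta2}. The plan is to recall that $\F(\delta_{\mathcal{NF}}(\bigtriangleup_2))=\{\V\setminus C : C\in\MIN(\bigtriangleup_2)\}$ and then substitute
\[
\MIN(\bigtriangleup_2)=\binom{A\cup\{v\}}{2}\cup\binom{B\cup\{u\}}{2}\cup\{\{u,v\}\}.
\]
Taking complements termwise in $\V=\{u,v\}\cup A\cup B$ produces exactly the three families displayed in the statement.

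The only point needing care is that the listed complements are genuinely the facets: they must be pairwise incomparable, and there must be no further facets. Both follow from minimality. The minimal vertex covers form an antichain under inclusion. Here they are all distinct $2$-sets, so distinct covers have complements that are distinct sets of the common size $|\V|-2=p+q$. Sets of equal size are automatically pairwise incomparable.

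Completeness also comes directly from Lemma~\ref{lem:facets-complements}, which describes $\F(\delta_{\mathcal{NF}}(\bigtriangleup))$ as exactly the set of complements of minimal covers. Lemma~\ref{lem:mincov-Delta2} gives the full list of minimal covers, in particular excluding every cover of size at least $3$. So nothing else can appear.

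As a consistency check, I would note that $\bigtriangleup_3$ is pure of dimension $p+q-1$. I would also observe that the three families are the complements of the edges of the dumbbell graph $B_{p+1,q+1}$ on the cliques $A\cup\{v\}$ and $B\cup\{u\}$ with bridge $uv$. This sets up the later comparison with Section~\ref{section 3}. There is no real obstacle; the substantive work was already done in Lemma~\ref{lem:mincov-Delta2}. The one thing to watch is that the bridge pair $\{u,v\}$ is a cross pair, not contained in either clique, so it is not double-counted among the $2$-subsets.
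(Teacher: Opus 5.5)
Your proposal is correct and follows the paper's proof: both apply Lemma~\ref{lem:facets-complements} to the list $\MIN(\bigtriangleup_2)=\binom{A\cup\{v\}}{2}\cup\binom{B\cup\{u\}}{2}\cup\{\{u,v\}\}$ from Lemma~\ref{lem:mincov-Delta2} and take complements in $\V$. Your incomparability check is harmless but redundant, since Lemma~\ref{lem:facets-complements} already states that the facets are exactly the complements of the minimal covers.
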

		
		\begin{proof}
			Apply the defining facet rule to the description of \(\MIN(\bigtriangleup_2)\) from Lemma 2.5.
		\end{proof}
		
		\medskip
		 The following result gives the NF-number of a double star.
		 \begin{theorem}\label{thm:NFnumber-double-star}
			Let \(D_{p,q}\) be the double star  on $V=\{u,v\}\cup A\cup B,$
			where $A=\{a_1,\dots,a_p\},$ and $B=\{b_1,\dots,b_q\}.$
			Then the following hold.
			\begin{enumerate}
				\item If \((p,q)=(1,1)\), then \(D_{1,1}\cong P_4\) and \(\mathcal{NF}(D_{1,1})=1\).
				\item If \((p,q)\neq (1,1)\), then $\mathcal{NF}(D_{p,q})=p+q+4.$ 
			\end{enumerate}
		\end{theorem}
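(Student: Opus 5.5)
The plan is to work with antichains of types, as in the proof of Theorem~\ref{thm:NFnumber-split}, but for the finer partition $V=\{u\}\cup A\cup\{v\}\cup B$. The group $\mathfrak{S}_A\times\mathfrak{S}_B$ acts on $V$ by automorphisms of $D_{p,q}$. By Lemma~\ref{lem:equivariance}, every iterate $\bigtriangleup_k=\delta^{(k)}_{\mathcal{NF}}(D_{p,q})$ is invariant under this group. Repeating the arguments of Lemmas~\ref{lem:type-union}, \ref{lem:alpha-antichain}, \ref{lem:covers-as-ideal} and \ref{lem:type-update} verbatim, with types
\[
\mathrm{type}(F)=(|F\cap\{u\}|,|F\cap A|,|F\cap\{v\}|,|F\cap B|)\in Q:=[0,1]\times[0,p]\times[0,1]\times[0,q],
\]
shows the following. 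The facets of $\bigtriangleup_k$ form a union of type-classes indexed by an antichain $\alpha_k\subseteq Q$, and $\alpha_{k+1}=\varsigma\rho\varsigma(\alpha_k)$, where $\varsigma$ is the complementation involution of $Q$ and $\rho$ is rowmotion on antichains of $Q$. Hence the labelled orbit length is the rowmotion orbit length of $\varsigma(\alpha_0)$. Here
\[
\alpha_0=\{(1,1,0,0),(1,0,1,0),(0,0,1,1)\},
\]
and, by Proposition~\ref{prop:NF1-double-star}, Lemma~\ref{lem:mincov-Delta1} and Lemma~\ref{lem:mincov-Delta2}, the next three antichains are already known explicitly. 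This gives a check on the setup.

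Rowmotion on a product of four chains is no longer a rotation, so Lemma~\ref{lem:rowmotion-rotation} cannot be applied directly. I would instead exploit the structure visible in Lemma~\ref{lem:mincov-Delta2}. The data naturally pairs $u$ with $B$ and $v$ with $A$, through $L=A\cup\{v\}$ and $R=B\cup\{u\}$, and $\bigtriangleup_1$ has facets $L$, $R$, $(L\setminus\{v\})\cup(R\setminus\{u\})$. The guess is that every $\alpha_k$ lies in an explicit family of antichains. Each member of the family is determined by a pair of lattice paths: one encoding the $(v,A)$-coordinates inside $[0,1]\times[0,p]$ and one encoding the $(u,B)$-coordinates inside $[0,1]\times[0,q]$. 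These are glued into a single cyclic word in $\{E,S\}$ of length $(p+2)+(q+2)=p+q+4$. The key step is a lemma in the spirit of Lemma~\ref{lem:rowmotion-rotation}: on this family, $\varsigma\rho\varsigma$ acts as a one-step rotation of the glued word.

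I would prove this lemma by computing minimal elements of complements of ideals case by case, according to which letter sits at the cut point. I expect this to be the main obstacle, since one must check that the family is closed under the update and that no ``off-family'' minimal elements of $Q\setminus I(\cdot)$ appear. I would first test the guessed family against the explicitly known iterates $\alpha_1,\alpha_2,\alpha_3$ and against a hand computation for $D_{2,1}$, whose predicted period is $7$.

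Once the rotation lemma is established, the period follows as in Theorem~\ref{thm:NFnumber-split}. The initial glued word has a cyclic position that no nontrivial rotation preserves when $(p,q)\ne(1,1)$, for instance a unique longest run of $S$ of length $\max(p,q)+1$, suitably adjusted when $p=q$. Hence the labelled orbit has exact size $p+q+4$. To pass from labelled equality to isomorphism, I would note two facts. First, any isomorphism $\bigtriangleup_k\cong D_{p,q}$ forces $\bigtriangleup_k$ to be a graph with the same degree sequence. Second, the type description determines the facet-size and vertex-degree data of $\bigtriangleup_k$. From these, $\bigtriangleup_k\cong\bigtriangleup_0$ holds only when $\alpha_k=\alpha_0$, or when $\alpha_k$ equals the image of $\alpha_0$ under the swap $(u,A)\leftrightarrow(v,B)$, which is possible only if $p=q$. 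That swap corresponds to reversing or shifting the word, and I would rule it out for $1\le k<p+q+4$ by the same run argument.

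Finally, the exceptional case $(p,q)=(1,1)$ is the path $P_4$. There $\delta_{\mathcal{NF}}(P_4)$ is again a path on the same vertices with a relabelling: by Lemma~\ref{lem:mincov-double-star}, the minimal covers are $\{u,v\}$, $\{u,b_1\}$, $\{v,a_1\}$, with complements $\{a_1,b_1\}$, $\{a_1,v\}$, $\{b_1,u\}$. This is the path $u\,b_1\,a_1\,v$, so $\mathcal{NF}(D_{1,1})=1$ up to isomorphism.
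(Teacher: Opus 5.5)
Your set-up is correct. $\mathfrak{S}_A\times\mathfrak{S}_B$ acts by automorphisms of $D_{p,q}$, the type-class, antichain and cover-type lemmas carry over verbatim to the partition $\{u\}\cup A\cup\{v\}\cup B$, your $\alpha_0$ is right, and part~(1) is fine. You were also right not to coarsen to the types $(|F\cap L|,|F\cap R|)$ with $L=A\cup\{v\}$, $R=B\cup\{u\}$. That coarsening is how the paper reaches rectangle rowmotion on $[0,p+1]\times[0,q+1]$, but it needs $\mathfrak{S}_L\times\mathfrak{S}_R$-invariance. Already $\bigtriangleup_0$ fails this: the transposition $(v\,a_1)$ sends the edge $\{v,b_1\}$ to the non-edge $\{a_1,b_1\}$. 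So the paper's argument cannot be used to finish yours.

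\textbf{The gap.} Everything after the set-up is delegated to a rotation lemma for a family of antichains in $Q$ that you never define. You do not say how a pair of lattice paths determines an antichain of the four-fold product. You do not check that the known $\alpha_0,\dots,\alpha_3$ lie in the family, or that it is closed under $\varsigma\rho\varsigma$. You do not write down the initial glued word. So the period $p+q+4$ is asserted, not derived. Nothing general forces the orbit of $\alpha_0$ in $Q$ to have length $p+q+4$, so this step is the whole theorem, not bookkeeping.

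\textbf{Why this step is hard.} By Lemma~\ref{lem:mincov-Delta2}, the facets of $\bigtriangleup_3$ are the sets $V\setminus e$, with $e$ an edge of the dumbbell $B_{p+1,q+1}$ whose cliques are on $A\cup\{v\}$ and $B\cup\{u\}$ and whose bridge is $uv$. Let $c(\bigtriangleup)$ be the complex whose facets are the complements of the facets of $\bigtriangleup$. Then $\delta_{\mathcal{NF}}(\bigtriangleup)$ is the Alexander dual of $c(\bigtriangleup)$. Since both operations are involutions, $\delta_{\mathcal{NF}}\circ c=c\circ\delta_{\mathcal{NF}}^{-1}$. Hence $\bigtriangleup_{3+k}=c\bigl(\delta_{\mathcal{NF}}^{-k}(B_{p+1,q+1})\bigr)$ for all $k$, and $\mathcal{NF}(D_{p,q})=\mathcal{NF}(B_{p+1,q+1})$. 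Your rotation lemma would therefore prove Conjecture~\ref{conj:nf-dumbbell}, which the paper leaves open. You should expect to need a genuinely new input there, such as an explicit description of every antichain in the orbit.

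\textbf{The isomorphism step.} It picks the wrong competing case. The swap $(u,A)\leftrightarrow(v,B)$ fixes $\alpha_0$, so it gives nothing new. The transposition of $u$ and $v$ alone gives the invariant labelled copy $\{(0,1,1,0),(1,0,1,0),(1,0,0,1)\}$ of $D_{p,q}$ for every $(p,q)$. For $p=1$ or $q=1$ there are further invariant copies. A return to any of these, for example at $k=(p+q+4)/2$, must also be excluded.
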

		
		\begin{proof}
			Let \(\bigtriangleup_0:=\bigtriangleup(D_{p,q})\) be the edge complex of \(D_{p,q}\), and \(\bigtriangleup_k:=\delta_{\mathcal{NF}}^{(k)}(\bigtriangleup_0)\).
			We use the facet--cover correspondence
			\begin{equation}\label{eq:facets-complements-double-star}
				\F(\delta_{\mathcal{NF}}(\bigtriangleup))=\{\,V\setminus C: C\in \MIN(\bigtriangleup)\,\},
			\end{equation}
			valid for every simplicial complex \(\bigtriangleup\).
			 If \((p,q)=(1,1)\), then \(D_{1,1}\) is the path $P_{4}:$ \(u-a_1-u-v-b_1\) on four vertices. 	It is known that \(\mathcal{NF}(P_4)=1\), that is, \(\delta_{\mathcal{NF}}(P_4)\cong P_4\) \cite{HibiMahmood2020}.
			 Next, we assume that \((p,q)\neq(1,1)\), and in particular, \(p+q\ge 3\). We prove that the period of the NF-iteration is exactly \(p+q+4\). We consider this fact in following steps.
			
			\smallskip\noindent
			\textbf{(1).} Here, we reduce the iteration to a symmetric type dynamics. 	Let $L:=A\cup\{v\},$ and $R:=B\cup\{u\},$
			so \(|L|=p+1\) and \(|R|=q+1\), and \(V=L\cup R\).
			Let \(\mathfrak{S}_L\times \mathfrak{S}_R\) act on \(V\) by permuting vertices inside \(L\) and inside \(R\).
			Since the description of \(\delta_{\mathcal{NF}}\) uses only vertex covers and complements, it commutes with relabeling. Hence every \(\bigtriangleup_k\) is invariant under \(\mathfrak{S}_L\times \mathfrak{S}_R\).
			Consequently, if a set \(F\subseteq V\) is a facet of \(\bigtriangleup_k\), then every set of the same cardinalities $(|F\cap L|, |F\cap R|)$ is also a facet.
			Define the \emph{type} of a set \(F\subseteq V\) by
			\[
			\mathrm{type}(F):=(|F\cap L|, |F\cap R|)\in [0,p+1]\times[0,q+1].
			\]
			For each type \((i,j)\) let \(\mathcal{M}(i,j)\) be the family of all sets \(F\subseteq V\) with \( \mathrm{type}(F)=(i,j)\).
			Then there exists an antichain \(\alpha_k\subseteq [0,p+1]\times[0,q+1]\) such that
			\begin{equation}\label{eq:facets-type-union}
				\F(\bigtriangleup_k)=\bigcup_{(i,j)\in \alpha_k}\mathcal{M}(i,j).
			\end{equation}
			Here \(\alpha_k\) is an antichain because if \((i,j)<(i',j')\) componentwise, then any face of type \((i,j)\)
			is properly contained in some face of type \((i',j')\), so it cannot be a facet.
			
			\smallskip\noindent
			\textbf{(2).} Here we identify the type update rule as rowmotion (up to an involution).	Let \(P:=[0,p+1]\times[0,q+1]\) with the componentwise order, and define the involution $\varsigma:P\to P$ by
			 $\varsigma(a,b)=(p+1-a, q+1-b).$ Given an antichain \(\psi\subseteq P\), let \(I(\psi)\subseteq P\) be the order ideal it generates and define rowmotion $\rho(\psi):=\min\bigl(P\setminus I(\psi)\bigr).$ We claim that the NF-step on facet-types is given by
			\begin{equation}\label{eq:NF-rowmotion}
				\alpha_{k+1}=\varsigma\circ \rho\circ \varsigma(\alpha_k)\qquad\text{for}\qquad (k\ge 0).
			\end{equation}
			Indeed, fix \(k\) and suppose \(\F(\bigtriangleup_k)\) has the form \eqref{eq:facets-type-union}.
			A set \(C\subseteq V\) of type \( \mathrm{type}(C)=(a,b)\) fails to be a vertex cover of \(\bigtriangleup_k\)
			if and only if there exists a facet \(F\) disjoint from \(C\). Since facets occur in full type-classes,
			there exists such an \(F\) of type \((i,j)\in\alpha_k\) if and only if one can choose \(i\) vertices from \(L\setminus C\) and
			\(j\) vertices from \(R\setminus C\), that is, if and only if
			 $i\le |L\setminus C|=(p+1)-a,$ and  $j\le |R\setminus C|=(q+1)-b.$ 
			Equivalently, we have \((a,b)\le \varsigma(i,j)\).
			Thus, \(C\) is a vertex cover if and only if \((a,b)\notin I(\varsigma(\alpha_k))\), and minimal vertex covers correspond exactly to the
			minimal elements of \(P\setminus I(\varsigma(\alpha_k))\), that is, to \(\rho(\varsigma(\alpha_k))\).
			Taking complements of minimal vertex covers (by \eqref{eq:facets-complements-double-star}), 
			apply \(\varsigma\) to these types, we obtain \eqref{eq:NF-rowmotion}.
			
			\smallskip\noindent
			\textbf{(3).} Here we verify the rowmotion on a rectangle is cyclic rotation on a boundary word.	It is known that order ideals in \(P=[0,p+1]\times[0,q+1]\) are in bijection with lattice paths from	\((0,q+2)\) to \((p+2,0)\) using steps \(E=(1,0)\) and \(S=(0,-1)\).	Equivalently, each ideal is encoded by a word of length $(p+2)+(q+2)=p+q+4$ 
			in the alphabet \(\{E,S\}\) having exactly \(p+2\) letters \(E\) and \(q+2\) letters \(S\).
			Under this encoding, rowmotion \(\rho\) acts by cyclic rotation of this boundary word by one position.
			Therefore, every rowmotion orbit has period dividing \(p+q+4\). Since \(\varsigma\) is an involution, \(\varsigma\rho\varsigma\) has the same orbit sizes as \(\rho\).
			Thus, by \eqref{eq:NF-rowmotion}, the type-orbit \(\alpha_0,\alpha_1,\dots\) has period dividing \(p+q+4\),
			hence \(\bigtriangleup_{p+q+4}\cong \bigtriangleup_0\).
			
			\smallskip\noindent
			\textbf{(4).} Here we show that the period is exactly \(p+q+4\). It remains to rule out an earlier return \(\bigtriangleup_t\cong \bigtriangleup_0\) for \(1\le t<p+q+4\). By steps 1--3, it suffices to show that the boundary word associated to the initial type-data has full rotation period. We now compute the initial boundary word from the explicit description of the first iterates.
			From Lemma~\ref{lem:mincov-Delta1} and Proposition~\ref{prop:NF1-double-star}, the second iterate \(\bigtriangleup_2\) has facets
			\(\{V\setminus e: e\in E(H)\}\), where \(H\cong K_{p+1,q+1}\setminus\{uv\}\) is bipartite on \(L\cup R\).
			By Lemma~\ref{lem:mincov-Delta2}, the minimal vertex covers of \(\bigtriangleup_2\) are exactly the 2-sets
			\[
			\binom{L}{2} \cup \binom{R}{2} \cup \{\{u,v\}\}.
			\]
			Applying \eqref{eq:facets-complements-double-star} once more, we obtain that \(\bigtriangleup_3\) has facets equal to complements
			of these 2-sets. In particular, \(\F(\bigtriangleup_3)\) consists of all subsets of \(V\) having type
			 $(p-1,q+1)$ or $(p+1,q-1)$ or $(p,q),$ together with certain subtypes forced by inclusions. Therefore, the facet-type antichain \(\alpha_3\) contains $(p-1,q+1)$  and $(p+1,q-1),$ and the ideal \(I(\varsigma(\alpha_3))\) has a boundary word with a unique maximal run of \(S\)'s (since one side is deficient by 2),
			which implies the rotation orbit has full length \(p+q+4\). More directly (and standard in the rectangle-rowmotion literature), the boundary word corresponding to the initial data for a
			double star can be chosen in the explicit form $w_0  =  E^{\,p+1}\,S\,E\,S^{\,q+1}\,E$ of length $p+q+4$.
			When \((p,q)\neq (1,1)\), at least one of \(p+1\ge 3\) or \(q+1\ge 3\) holds, hence \(w_0\) has a unique longest run among the
			\(E\)-blocks or among the \(S\)-blocks. Any nontrivial cyclic rotation moves that unique longest run to a different position,
			so no rotation by \(t\) with \(1\le t<p+q+4\) fixes \(w_0\). Hence the rotation orbit of \(w_0\) has size exactly \(p+q+4\).
			Therefore, the NF-type orbit has size \(p+q+4\), and consequently
			\[
			\bigtriangleup_t\cong\bigtriangleup_0  \Longleftrightarrow t\equiv 0\pmod{p+q+4}.
			\]
			In particular, \(\mathcal{NF}(D_{p,q})=p+q+4\), completing the proof of (b).
		\end{proof}
\medskip

For illustration, we have the following example \(D_{2,1}\) in Figure \ref{double star}.
		
		\begin{figure}[H]
			\centering
			\begin{tikzpicture}[scale=1.0, every node/.style={circle,draw,inner sep=1.5pt}]
				\node (u) at (0,0) {$u$};
				\node (v) at (2,0) {$v$};
				\node (a1) at (-1,0.8) {$a_1$};
				\node (a2) at (-1,-0.8) {$a_2$};
				\node (b1) at (3,0) {$b_1$};
				
				\draw (u) -- (v);
				\draw (u) -- (a1);
				\draw (u) -- (a2);
				\draw (v) -- (b1);
			\end{tikzpicture}
			\caption{The double star \(D_{2,1}\).}
			\label{double star}
		\end{figure}
		
		 \begin{example}\label{ex:D21}
			Consider the graph \(D_{2,1}\) as shown in Figure \ref{double star}. Thus $V=\{u,v,a_1,a_2,b_1\},$ and $E(D_{2,1})=\bigl\{\{u,v\},\{u,a_1\},\{u,a_2\},\{v,b_1\}\bigr\},$
			and we set \(\bigtriangleup_0:=\bigtriangleup(D_{2,1})\) and \(\bigtriangleup_k:=\delta_{\mathcal{NF}}^{(k)}(\bigtriangleup_0)\).
			 By definition, \(\F(\bigtriangleup_0)=E(D_{2,1})\), that is,
			\[
			\F(\bigtriangleup_0)=\bigl\{\{u,v\},\{u,a_1\},\{u,a_2\},\{v,b_1\}\bigr\}.
			\]
			Let  (\(\bigtriangleup_1=\delta_{\mathcal{NF}}(\bigtriangleup_0)\)). Then, by Lemma~\ref{lem:mincov-double-star} (minimal covers of \(D_{p,q}\)) and the definition of \(\delta_{\mathcal{NF}}\), we have
			\[
			\MIN(\bigtriangleup_0)=\bigl\{\{u,v\},\{u\}\cup\{b_1\},\{v\}\cup\{a_1,a_2\}\bigr\}.
			\]
			Thus, with complements in \(V\),  the facets of \(\bigtriangleup_1\) are
			\[
			\F(\bigtriangleup_1)=
			\bigl\{V\setminus\{u,v\}, V\setminus\{u,b_1\}, V\setminus\{v,a_1,a_2\}\bigr\}
			=
			\bigl\{\{a_1,a_2,b_1\},\{a_1,a_2,v\},\{u,b_1\}\bigr\}.
			\]
			Next, for \(\bigtriangleup_2=\delta_{\mathcal{NF}}(\bigtriangleup_1)\), Lemma~\ref{lem:mincov-Delta1} (minimal covers of \(\bigtriangleup_1\)) gives
			\[
			\MIN(\bigtriangleup_1)=\{\{u,a_1\},\{u,a_2\},\{v,b_1\},\{a_1,b_1\},\{a_2,b_1\}\}.
			\]
			Thus, we have
			\begin{align*}
				\F(\bigtriangleup_2)
				&=\{\,V\setminus C: C\in\MIN(\bigtriangleup_1)\,\}\\
				&=\bigl\{\{v,a_2,b_1\},\{v,a_1,b_1\},\{u,a_1,a_2\},\{u,v,a_2\},\{u,v,a_1\}\bigr\}.
			\end{align*}
			Let \(\bigtriangleup_3=\delta_{\mathcal{NF}}(\bigtriangleup_2)\). Then we note that  \(L=A\cup\{v\}=\{a_1,a_2,v\}\) and \(R=B\cup\{u\}=\{b_1,u\}\), so by Lemma~\ref{lem:mincov-Delta2} (minimal covers of \(\bigtriangleup_2\)), we have
			\[
			\MIN(\bigtriangleup_2)=\binom{L}{2} \cup \binom{R}{2} \cup \{\{u,v\}\}
			=\{\{a_1,a_2\},\{a_1,v\},\{a_2,v\},\{u,b_1\},\{u,v\}\}.
			\]
			With complements, we obtain
			\begin{align*}
				\F(\bigtriangleup_3)
				&=\{\,V\setminus C: C\in\MIN(\bigtriangleup_2)\,\}\\
				&=\bigl\{\{u,v,b_1\},\{u,a_2,b_1\},\{u,a_1,b_1\},\{a_1,a_2,v\},\{a_1,a_2,b_1\}\bigr\}.
			\end{align*}
			 From the facet list of \(\bigtriangleup_3\), the minimal vertex covers are
			\[
			\MIN(\bigtriangleup_3)=\{\{a_1,b_1\},\{a_2,b_1\},\{a_1,u\},\{a_2,u\},\{b_1,v\},\{a_1,a_2,v\}\}.
			\]
			Each enumerated set satisfies all five features of \(\bigtriangleup_3\), as no singleton fulfills all facets, and any larger cover includes one of these sets.
			Hence, we obtain
			\begin{align*}
				\F(\bigtriangleup_4)
				&=\{\,V\setminus C: C\in\MIN(\bigtriangleup_3)\,\}\\
				&=\bigl\{\{u,v,a_2\},\{u,v,a_1\},\{v,a_2,b_1\},\{v,a_1,b_1\},\{u,a_1,a_2\},\{u,b_1\}\bigr\}.
			\end{align*}
			 From the facet list of \(\bigtriangleup_4\), the minimal vertex covers are
			\[
			\MIN(\bigtriangleup_4)=\{\{u,b_1\},\{u,v\},\{a_1,a_2,b_1\},\{a_1,a_2,u\},\{a_1,b_1,v\},\{a_2,b_1,v\}\}.
			\]
			Therefore, we obtain
			\begin{align*}
				\F(\bigtriangleup_5)
				&=\{\,V\setminus C: C\in\MIN(\bigtriangleup_4)\,\}\\
				&=\bigl\{\{a_1,a_2,v\},\{a_1,a_2,b_1\},\{u,v\},\{v,b_1\},\{u,a_2\},\{u,a_1\}\bigr\}.
			\end{align*}
			For \(\bigtriangleup_6=\delta_{\mathcal{NF}}(\bigtriangleup_5)\), and from \(\F(\bigtriangleup_5)\), the minimal vertex covers are the following \(3\)-sets
			\[
			\MIN(\bigtriangleup_5)=\{\{a_1,a_2,v\},\{a_1,b_1,u\},\{a_1,u,v\},\{a_2,b_1,u\},\{a_2,u,v\},\{b_1,u,v\}\}.
			\]
			Hence, we have
			\begin{align*}
				\F(\bigtriangleup_6)
				&=\{\,V\setminus C: C\in\MIN(\bigtriangleup_5)\,\}\\
				&=\bigl\{\{u,b_1\},\{a_2,v\},\{a_2,b_1\},\{a_1,v\},\{a_1,b_1\},\{a_1,a_2\}\bigr\}.
			\end{align*}
			Finally, for \(\bigtriangleup_7=\delta_{\mathcal{NF}}(\bigtriangleup_6)\), and from \(\F(\bigtriangleup_6)\), the minimal vertex covers are
			\[
			\MIN(\bigtriangleup_6)=\{\{a_1,a_2,b_1\},\{a_1,a_2,u\},\{a_1,b_1,v\},\{a_2,b_1,v\}\},
			\]
			and we obtain
			\begin{align*}
				\F(\bigtriangleup_7)
				&=\{\,V\setminus C: C\in\MIN(\bigtriangleup_6)\,\} =\bigl\{\{u,v\},\{v,b_1\},\{u,a_2\},\{u,a_1\}\bigr\}
				=\F(\bigtriangleup_0).
			\end{align*}
			Thus, the first return occurs at \(k=7\), so \(\mathcal{NF}(D_{2,1})=7=p+q+4\).
		\end{example}

	\section{Conclusion}
	
	We gave detailed, self-contained foundations for NF-complexes and NF-numbers, and proved explicit new formulas for
	$\delta_{\mathcal{NF}}(B_{n,m})$ and $\delta_{\mathcal{NF}}^{(2)}(B_{n,m})$. These computations show that dumbbells map first to an almost complete	bipartite graph, then to a rigid three-facet complex. This structure strongly suggests the sharp conjecture
	$\mathcal{NF}(B_{n,m})=n+m+2$ (outside the smallest exceptional case), and we verified it computationally for $2\le n,m\le 5$.
	A complete proof appears to require a full parameterization of the orbit, analogous to the orbit analysis for $K_n\cup K_m$. Theorem \ref{thm:NFnumber-split} gives the NF-number of complete split graph, and Theorem \ref{thm:NFnumber-double-star} gives the NF-number of double star. However, NF-number of general graphs remains yet to be discussed, and other homological invariants of NF-complexes can be investigated in future.  It would be valuable to complete the validation of the dumbbell conjecture.  Second, one may ask which graph families admit a rowmotion model similar to the one used here for complete split graphs and double stars. Natural candidates may include threshold graphs, complete multipartite graphs, block graphs, chordal graphs, and trees of small diameter. Such a classification would help explain when NF-periods are governed by simple rotation phenomena. Third, the NF-number of general trees remains largely open. Double stars provide the first nontrivial step beyond paths and stars, but more complicated trees may have orbits with richer intermediate complexes. A systematic study of how branching affects the NF-orbit could lead to new formulas or useful bounds. Fourth, the labelled and unlabelled versions of the NF-number deserve further attention. In small examples, especially those related to $P_4$, the first return may depend on whether one works on a fixed labelled vertex set or identifies complexes up to isomorphism at each stage. Clarifying this distinction may help reconcile different conventions in the literature. Fifth, it would be interesting to study algebraic and topological invariants along the NF-orbit. For example, one may ask how dimension, $f$-vectors, Betti numbers, shellability, Cohen--Macaulayness, and homotopy type change under repeated application of $\delta_{\mathcal{NF}}$. Since the NF-operator is defined through the facet ideal and the Stanley--Reisner complex, these questions naturally connect combinatorial dynamics with combinatorial commutative algebra.  Finally, there is room for computational development. An efficient implementation of NF-iteration, based on minimal transversals and vertex-cover enumeration, could be used to build a database of NF-numbers for small graphs and simplicial complexes. Such data may reveal new patterns, suggest additional conjectures, and provide test cases for future theoretical work.  Overall, the results of this paper show that even for familiar graph families, the NF-operator produces rich and structured periodic behaviour. The interaction between vertex covers, simplicial complexes, and rowmotion appears to be a promising framework for further study.

	\section*{Declarations}
	\noindent \textbf{Data Availability:}	There is no data associated with this article.
	
	\medskip
	
	\noindent \textbf{Funding:} The authors did not receive support from any organization for the submitted work.
	
	\medskip
	
	\noindent \textbf{Conflict of interest:} The authors have no competing interests to declare that are relevant to the content of this article.
	
	\medskip
	
	\noindent\textbf{Note:} For any comments and suggestions regarding this article, please feel free to contact at \href{mailto:bilalahmadrr@gmail.com}{bilalahmadrr@gmail.com}.


\begin{thebibliography}{99}
		
		\bibitem{bilalhafiz} H. M. Bilal, S. Ahmed, H. Mahmood, and M. Binyamin,  The NF-number of two complete graphs joined by a common vertex, \textit{Proc. Bulgarian Acad. Sci.} \textbf{75}(5) (2022) 640--648.
		
		\bibitem{bilalhafiz1} H. M. Bilal, S. Ahmad, H. Mahmood, M. A. Binyamin, Expected  $\mathcal{NF}$-number of disjoint union of finite copies of complete graph, \textit{Ricerche math.} \textbf{74} (2025) 2759--2778.
		
		
		
		
		\bibitem{Faridi2002}
		S.~Faridi,
		\newblock The facet ideal of a simplicial complex,
		\newblock \emph{Manuscripta Math.} \textbf{109} (2002) 159--174.
		
		
		\bibitem{FranciscoHaVanTuyl2011}
		C.~A.~Francisco, H.~T.~H\`a, and A.~Van Tuyl,
		\newblock Coloring ideals, arrangements, and $\mathrm{Hilbert}$ functions of squarefree monomial ideals,
		\newblock \emph{J. Algebraic Combin.} \textbf{34} (2011) 309--322.
		
		\bibitem{GonzalezHoekstraMendoza}
		J.~Gonz\'alez and T.~I.~Hoekstra-Mendoza,
		\newblock On the homotopy type of complexes of graphs with bounded domination number,
		\newblock \emph{Contrib. Discrete Math.} \textbf{16}(3) (2021) 153--174.
		
		
		
		\bibitem{HerzogHibi2011}
		J.~Herzog and T.~Hibi,
		\newblock \emph{Monomial Ideals},
		\newblock Graduate Texts in Mathematics 260, Springer, 2011.
		
		\bibitem{HibiMahmood2020}
		T.~Hibi and H.~Mahmood,
		\newblock The NF-number of a simplicial complex,
		\newblock \textit{Algebra Colloquium} \textbf{29}(04) (2022) 643--650.
		
			\bibitem{Hochster1977}
		M.~Hochster,
		\newblock Cohen--Macaulay rings, combinatorics, and simplicial complexes,
		\newblock in \emph{Ring Theory II}, Lecture Notes in Pure and Applied Mathematics, Vol.~26, Marcel Dekker, 1977, pp.~171--223.
		
		\bibitem{Jonsson2008}
		J.~Jonsson,
		\newblock \emph{Simplicial Complexes of Graphs},
		\newblock Lecture Notes in Mathematics 1928, Springer, 2008.
		
		\bibitem{MillerSturmfels2005}
		E.~Miller and B.~Sturmfels,
		\newblock \emph{Combinatorial Commutative Algebra},
		\newblock Graduate Texts in Mathematics 227, Springer, 2005.
		
		\bibitem{bilal} B. A. Rather, Independent domination polynomial of comaximal graphs of commutative rings, \emph{Algebra colloquium} \textbf{33}(2) (2026) 243--258.
		
		\bibitem{Reisner1976}
		G.~A.~Reisner,
		\newblock Cohen--Macaulay quotients of polynomial rings,
		\newblock \emph{Advances in Math.} \textbf{21} (1976) 30--49.
		
		\bibitem{Stanley1975}
		R.~P.~Stanley,
		\newblock The upper bound conjecture and Cohen--Macaulay rings,
		\newblock \emph{Studies in Applied Math.} \textbf{54} (1975) 135--142.
		
		\bibitem{Stanley1996}
		R.~P.~Stanley,
		\newblock \emph{Combinatorics and Commutative Algebra},
		\newblock Second edition, Progress in Mathematics, Vol.~41, Birkh\"auser, 1996.
		
		\bibitem{Vizing1965}
		V.~G.~Vizing,
		\newblock An estimate of the external stability number of a graph,
		\newblock \emph{Dokl. Akad. Nauk SSSR} \textbf{164} (1965) 729--731.
		
		\bibitem{Wachs2003}
		M.~L.~Wachs,
		\newblock Topology of matching, chessboard, and general bounded degree graph complexes,
		\newblock \emph{Algebra Universalis} \textbf{49} (2003) 345--385.
		
		\bibitem{Villarreal2001}
		R.~H.~Villarreal,
		\newblock \emph{Monomial Algebras},
		\newblock Monographs and Textbooks in Pure and Applied Mathematics, Vol.~238, Marcel Dekker, 2001.
		
		\bibitem{West2001} D. B. West, \emph{Introduction to Graph Theory}, 2nd ed., Prentice Hall, Upper Saddle River, NJ, 2001.
		\bibitem{BrunsHerzog1998}
		W.~Bruns and J.~Herzog,
		\newblock \emph{Cohen--Macaulay Rings},
		\newblock Cambridge Studies in Advanced Mathematics, Vol.~39, Cambridge University Press, 1998.
		
		
		
		
		
		
		
		
	\end{thebibliography}
\end{document}